\newtheorem{theorem}{Theorem}
\newtheorem{proposition}{Proposition}
\newtheorem{lemma}{Lemma}
\theoremstyle{definition}\newtheorem{example}{Example}
\theoremstyle{definition}\newtheorem{definition}{Definition}
\theoremstyle{definition}\newtheorem{remark}{Remark}
\newcommand{\fonction}[5]{\begin{array}[t]{lrcl}#1 :&#2 &\longrightarrow &#3\\&#4& \longmapsto &#5 \end{array}}
\def\;{\,}
\def\di{\displaystyle}
\def\R{\mathbb{R}}
\def\N{\mathbb{N}}
\def\B{\mathrm{B}}
\def\BB{\overline{\B}}
\def\KK{\mathcal{K}_\Omega}
\def\C{\mathrm{C}}
\def\AC{\mathrm{AC}}
\def\L{\mathrm{L}}
\def\H{\mathrm{H}}
\def\E{\mathrm{E}}
\def\ID{\mathrm{Id}}
\def\t{\tau}
\def\a{\alpha}
\def\aa{\overline{\a}}
\def\aaa{\underline{\a}}
\def\c{\mathrm{c}}
\def\loc{\mathrm{loc}}
\def\I{\mathrm{I}}
\def\Im{\mathrm{I}^\a_{a+}}
\def\Ip{\mathrm{I}^\a_{b-}}
\def\II{\mathbb{I}}
\def\IIm{\II_{a+}}
\def\IImf{\II_{a+}^f}
\def\IIp{\II_{b-}}
\def\CD{{}_\c \mathrm{D}}
\def\CDm{{}_\c \mathrm{D}^\alpha_{a+}}
\def\CDp{{}_\c \mathrm{D}^\alpha_{b-}}
\def\D{\mathrm{D}}
\def\Dm{\mathrm{D}^\alpha_{a+}}
\def\Dp{\mathrm{D}^\alpha_{b-}}
\title{Cauchy-Lipschitz theory for fractional multi-order dynamics -- State-transition matrices, Duhamel formulas and duality theorems}
\author{Lo\"ic Bourdin\footnote{Universit\'e de Limoges, Institut de recherche XLIM, D\'epartement de Math\'ematiques et d'Informatique. UMR CNRS 7252. Limoges, France (\texttt{loic.bourdin@unilim.fr}).}
}
\date{}
\begin{document}

\maketitle

\begin{abstract}
The aim of the present paper is to contribute to the development of the study of Cauchy problems involving Riemann-Liouville and Caputo fractional derivatives. Firstly existence-uniqueness results for solutions of non-linear Cauchy problems with vector fractional multi-order are addressed. A qualitative result about the behavior of local but non-global solutions is also provided. Finally the major aim of this paper is to introduce notions of \textit{fractional state-transition matrices} and to derive fractional versions of the classical Duhamel formula. We also prove duality theorems relying left state-transition matrices with right state-transition matrices.
\end{abstract}

\bigskip

\noindent\textbf{Keywords:} Fractional calculus; Riemann-Liouville derivatives; Caputo derivatives; state-transition matrix; Duhamel formula; duality theorem.

\bigskip

\noindent\textbf{AMS Classification:} 26A33; 34A08; 34A12; 34A30; 34A34.

\tableofcontents

\section{Introduction}

The \textit{fractional calculus} is the mathematical field that deals with the generalization of the classical notions of integral and derivative to any real order. The fractional calculus seems to be originally introduced in 1695 in a letter written by Leibniz to L'Hospital where he suggested to generalize his celebrated formula of the $k^\textrm{th}$-derivative of a product (where $k \in \N^*$ is a positive integer) to any positive real $k > 0$. In another letter to Bernoulli, Leibniz mentioned derivatives of \textit{general order}. Since then, numerous renowned mathematicians introduced several notions of fractional operators. We can cite the works of Euler (1730's), Fourier (1820's), Liouville (1830's), Riemann (1840's), Sonin (1860's), Gr\"unwald (1860's), Letnikov (1860's), Caputo (1960's), etc. All these notions are not disconnected. In most cases it can be proved that two different notions actually coincide or are correlated by an explicit formula. 

\smallskip

For a long time, the fractional calculus was only considered as a pure mathematical branch. In 1974, a first conference dedicated to this topic was organized by Ross at the University of New Haven (Connecticut, USA). Since then, the fractional calculus and its applications experience a boom in several scientific fields. The uses are so varied that it seems difficult to give a complete overview of the current researches involving fractional operators. We can at least mention that the fractional calculus is widely applied in the physical context of anomalous diffusion, see \textit{e.g.} \cite{hilf1,metz,neel,neel2,oldh2,zasl,zasl2}. Due to the non-locality of the fractional operators, they are also used in order to take into account of memory effects, see \textit{e.g.} \cite{bagl2,bagl3,dubo,pfit} where {viscoelasticity} is modelled by a fractional differential equation. We also refer to studies in wave mechanic \cite{alme}, economy \cite{comt}, biology \cite{gloc,magi}, acoustic \cite{mati2}, thermodynamic \cite{hilf2}, probability \cite{levy}, etc. In a more general point of view, fractional differential equations are even considered as an alternative model to non-linear differential equations, see \cite{boni}. We refer to \cite{hilf3,saba} for a large panorama of applications of fractional calculus.

\smallskip

The first reference book \cite{oldh} on fractional calculus, developing some mathematical aspects and applications, was written by Oldham and Spanier in 1974. In 1993, Miller and Ross~\cite{mill} have studied fractional differential equations. The monographs \cite{samk} of Kilbas, Marichev and Samko in 1987 and \cite{kilb} of Kilbas, Srivastava and Trujillo in 2006 are essential books on fractional calculus, dealing with mathematical aspects with rigorous proofs, in particular concerning regularity issues, with fractional differential equations and containing some applications. We also refer to \cite{dubo,iniz,podl} and some chapters of \cite{gore,hilf,mati} for handy introductions to fractional calculus. Finally, we also mention \cite{mach} for the recent history of the fractional calculus. 

\bigskip

The aim of the present paper is to contribute to the development of the study of Cauchy problems involving Riemann-Liouville and Caputo fractional derivatives, providing some new results of different types. Section~\ref{seccauchyproblems} is devoted to existence-uniqueness results for solutions of fractional Cauchy problems. We also prove a qualitative result concerning the behavior of local but non-global solutions. Section~\ref{sectransition} is devoted to the introduction of \textit{fractional state-transition matrices} and to fractional versions of the classical Duhamel formula. We also prove duality theorems relying left state-transition matrices with right state-transition matrices. But, before detailing the contributions of these two sections, we feel that it is of interest to give first a brief overview of the existing results in the literature. 

\paragraph*{Brief overview on the existing fractional Cauchy-Lipchitz theory.}
The present paragraph is widely inspired by the survey \cite{kilb2} and by \cite[Chapter~3]{kilb}. Most of the investigations about fractional differential equations are concerned with the Riemann-Liouville fractional derivative $\Dm$. Precisely the usual non-linear Cauchy problem investigated has the form
\begin{equation}\label{appCeqRLdiffintro}
\left\lbrace \begin{array}{l}
\Dm [q](t) = f(q(t),t),  \\
\I^{1-\a}_{a+} [q](a) = q_a,
\end{array}
\right.
\end{equation}
considered on a compact interval $[a,b]$ with $a < b$, and with a fractional order $0 < \a < 1$. Essentially (and as in the classical theory), the investigations of the above fractional Cauchy problem are based on the integral formulation
\begin{equation}\label{appCeqRLintintro}
q(t) = \dfrac{1}{\Gamma(\a)}(t-a)^{\a-1} q_a + \Im [f(q,\cdot)](t).
\end{equation}
The first paper dealing with this topic is due to Pitcher and Sewell \cite{pitc} in 1938. They investigate the case where $q_a=0$ and $f$ is a continuous function satisfying a boundedness and a global Lipschitz continuity assumptions. Despite that Pitcher and Sewell present the original idea of reducing the differential problem into an integral one, their main result, providing the existence of a global continuous solution of the integral equation~\eqref{appCeqRLintintro}, is based on an erroneous proof. However, under the same kind of assumptions on $f$ (but without $q_a = 0$), Al-Bassam \cite{alba} uses the method of successive approximations in 1965 in order to well establish the existence of a global continuous solution of the integral equation \eqref{appCeqRLintintro}. Nevertheless, the hypotheses on $f$ (in particular the boundedness) are very strong and avoid to apply this result to the academic example $f(x,t) = x$. In 1996, Delbosco and Rodino \cite{delb} consider an initial condition of type $q(a) = q_a $ instead of $\I_{a+}^{1-\a} [q](a) =q_a$. Under some continuity assumption on $f$ and using a fixed point theorem, they prove that the fractional Cauchy problem admits at least a local continuous solution. This result corresponds to a fractional version of the classical Peano theorem. Under a global Lipschitz continuity assumption, they moreover prove that the solution is unique and global. Note that Hayek \textit{et al} \cite{haye} apply the same argument and obtain the same last result for the more usual initial condition $\I_{a+}^{1-\a} [q](a) =q_a$. Recall that Kilbas \textit{et al} establish existence-uniqueness results in spaces of integrable functions \cite{kilb3} and in weighted spaces of continuous functions \cite{kilb4}. Actually, the subject is widely treated in several directions. We can cite \cite{diet,idcz,mill} for other examples of studies.

\smallskip

As mentioned in \cite[Chapter~3]{kilb}, the differential equations involving Caputo fractional derivatives have not been studied extensively. In a first period, only particular cases have been investigated in the view of giving explicitly the exact solutions, see \textit{e.g.} the works of Gorenflo \textit{et al} in \cite{gore,gore2,gore3}. In 2002, Diethelm and Ford \cite{diet2} study the usual non-linear Cauchy problem involving the Caputo fractional derivative $\CDm$ given by
\begin{equation}\label{appCeqCdiffintro}
\left\lbrace \begin{array}{l}
\CDm [q](t) = f(q(t),t),  \\
q(a) = q_a, 
\end{array}
\right.
\end{equation}
considered on a compact interval $[a,b]$ with $a < b$, and with a fractional order $0 < \a < 1$. They prove the existence and uniqueness of a local continuous solution under the assumptions of continuity and local Lipschitz continuity of $f$. They also investigate the dependence of the solution with respect to the initial condition and to the function $f$. Kilbas and Marzan \cite{kilb5,kilb6} also study the above fractional Cauchy problem via its integral formulation
\begin{equation}\label{appCeqCintintro}
q(t) = q_a + \Im [f(q,\cdot)](t),
\end{equation}
and prove existence and uniqueness of a global continuous solution in the case of continuous and global Lipschitz continuous function $f$. In \cite{diet3}, the authors address the very interesting question concerning the possibility (or not) of two intersecting solutions to the equation $\CDm [q](t) = f(q(t),t)$. In the classical case $\a =1$ it is well-known that the answer is no, however the study seems to be much more complex in the fractional case $0 < \a < 1$. We also mention the work of Kilbas \textit{et al} \cite{kilb} investigating the issue of boundary condition at any $t \in [a,b]$ (\textit{i.e.} not necessarily at $t=a$).  

\smallskip

Numerous studies have also been devoted to existence-uniqueness results for differential equations involving other notions of fractional operators. For example, we can cite the study \cite{kilb7} with Hadamard fractional derivatives.

\paragraph*{Contributions of Section~\ref{seccauchyproblems}.}
The present paper is actually motivated by the needs of completing the existing fractional Cauchy-Lipschitz theory in order to investigate non-linear control systems involving Caputo fractional derivatives, and more precisely in order to derive a fractional version of the classical Pontryagin maximum principle in optimal control theory.\footnote{Work in progress. We mention here that first fractional versions of the Pontryagin maximum principle are addressed in~\cite{ali,kamo}.} 

\smallskip

Section~\ref{seccauchyproblems} is devoted to a general Cauchy-Lipschitz theory involving Riemann-Liouville and Caputo fractional derivatives that generalizes the basic notions and results of the classical theory surveyed \textit{e.g.} in \cite{codd,smal}. Namely, we will study the fractional Cauchy problems~\eqref{appCeqRLdiffintro} and \eqref{appCeqCdiffintro} in the following framework:
\begin{itemize}
\item The dynamic $f$ is a general Carath\'eodory function (not necessarily continuous in its second variable). Such a framework is crucial in order to deal with fractional control systems where controls can be discontinuous.
\item The fractional Cauchy problems are considered on a general interval with lower bound $a$ (\textit{i.e.} the interval is not necessarily compact). Such a framework is crucial in order to deal with free final time optimal control problems (\textit{e.g.} minimal time problems).
\item The trajectories $q$ are multidimensional, that is, with values in $\R^m$ (with $m \in \N^*$ a positive integer) and $\a$ is a vector fractional multi-order in the sense that $\a = (\a_i) \in (0,1]^m$. Such a framework is crucial in fractional optimal control theory in order to be able to rewrite a Bolza or a Lagrange cost functional into a Mayer cost functional. Indeed, this classical tricky transformation makes arise in the fractional framework a vector fractional multi-order $\a = (\a_i) \in (0,1]^m$.
\item The trajectories $q$ are with values in a nonempty open subset $\Omega \subset \R^m$. We prove in Theorem~\ref{thm2} that any local solution of~\eqref{appCeqCdiffintro} that is not global must go out of any compact subset of $\Omega$. This result is crucial in optimal control theory in order to prove stability results on the trajectories. In particular it allows to prove that the admissibility of a trajectory is stable under small $\L^1$-perturbations on the control.
\end{itemize}
With the above considerations, we give in Section~\ref{seccauchyproblems} integral representations for solutions of~\eqref{appCeqRLdiffintro} and \eqref{appCeqCdiffintro} (see Propositions~\ref{prop1RL} and \ref{prop1C}) and existence-uniqueness results (see Theorems~\ref{thm1RL}, \ref{thm1} and \ref{thm3}). As mentioned and referenced in the previous paragraph, similar results are already well-known in the literature. The originality here lies in the fact that we deal with a general interval (that is not necessarily compact) and with a vector fractional multi-order $\a = (\a_i) \in (0,1]^m$. The usual proofs have been extended to this case, and the details are provided in Appendices~\ref{appsectionCauchyRL} and \ref{appsectionCauchyC} for the reader's convenience. Nevertheless, we also prove in Section~\ref{seccauchyproblems} that any local solution of~\eqref{appCeqCdiffintro} that is not global must go out of any compact subset of $\Omega$ (see Theorem~\ref{thm2}). To the best of our knowledge, this result has not been addressed in the literature yet and, as above explained, should have many applications in stability theory of fractional control systems.

\paragraph*{Classical state-transition matrices and classical Duhamel formula.}
In this section we give basic recalls about state-transition matrices and the Duhamel formula in the classical case $\a = 1$. Let $a < b$, $m \in \N^*$ be a positive integer and $A : [a,b] \to \R^{m \times m}$ be a square matrix function. For any $s \in [a,b]$, we denote by $Z(\cdot,s) : [a,b] \to \R^{m \times m}$ the unique solution of the homogeneous linear matrix Cauchy problem given by
\begin{equation*}
\left\lbrace \begin{array}{l}
\dot{Z}(t) = A(t) \times Z(t),  \\
Z(s) = \ID_m. 
\end{array}
\right.
\end{equation*}
The function $Z(\cdot,\cdot)$ is the so-called \textit{state-transition matrix} associated to $A$. In the case where $A(\cdot) = A$ is constant, it is well-known that $Z(t,s)$ can be expressed as the exponential matrix $e^{A(t-s)}$. An explicit expression of the unique solution $q$ of the forward non-homogeneous linear vector Cauchy problem given by
\begin{equation*}
\left\lbrace \begin{array}{l}
\dot{q}(t) = A(t) \times q(t) + B(t),  \\
q(a) = q_a,
\end{array}
\right.
\end{equation*}
where $B : [a,b] \to \R^m$ is a vector function, can be derived and is well-known as the classical \textit{Duhamel formula} given by
$$ q(t) = Z(t,a) \times q_a + \int_a^t Z(t,s) \times B(s) \; ds. $$
Finally, it is also well-known that $Z(\cdot,\cdot)$ satisfies a \textit{duality property}. Precisely, for any $t \in [a,b]$, $Z(t,\cdot) : [a,b] \to \R^{m \times m}$ is the unique solution of the homogeneous linear matrix Cauchy problem given by
\begin{equation*}
\left\lbrace \begin{array}{l}
\dot{Z}(s) = -Z(s) \times A(s),  \\
Z(t) = \ID_m. 
\end{array}
\right.
\end{equation*}
As a consequence, an explicit expression of the unique solution $q$ of the following backward non-homogeneous linear vector Cauchy problem
\begin{equation*}
\left\lbrace \begin{array}{l}
\dot{q}(s) = - A(s)^\top \times q(s) - B(s),  \\
q(b) = q_b,
\end{array}
\right.
\end{equation*}
is given by
$$ q(s) = Z(b,s)^\top \times q_b + \int_s^b Z(t,s)^\top \times B(t) \; dt.  $$
The above duality property is crucial in optimal control theory in order to fully justify the definition of the backward adjoint vector with respect to the forward variation vectors.\footnote{Actually the generalization of the duality property to the fractional case, in order to fully justify the definition of the backward adjoint vector in fractional optimal control problems, is at the origin of the present paper.}

\paragraph*{Contributions of Section~\ref{sectransition}.}
Section~\ref{sectransition} is the major and the most original part of the present paper. To the best of our knowledge, all results presented in this section are not addressed in the literature yet. We introduce in Section~\ref{sectransition} the notions of \textit{Riemann-Liouville and Caputo fractional state-transition matrices} denoted respectively by $Z(\cdot,\cdot)$ and ${}_\c Z(\cdot,\cdot)$, see Definitions~\ref{defRLstate} and \ref{defCstate}. They are associated to a square matrix function $A(\cdot) \in \R^{m \times m}$ and to a matrix fractional multi-order $\a = (\a_{ij}) \in (0,1]^{m \times m}$. In the case where $\a$ is a row constant matrix, we prove fractional versions of the classical Duhamel formula (see Theorems~\ref{thmduhamelRL} and \ref{thmduhamelC}). 

\smallskip

We mention here that fractional Duhamel formulas are already obtained in \cite{das,idcz}, but only for constant square matrix functions $A(\cdot) = A$ and with a (uni-)order $\a \in (0,1]$. In this particular case, the authors of \cite{das,idcz} interestingly express the state-transition matrices as follows: 
$$ Z(t,s) = (t-s)^{\a-1}E_{\a,\a} (A(t-s)^\a) \quad \text{and} \quad {}_\c Z(\cdot,\cdot) = E_{\a,1}(A(t-s)^\a) , $$
where $E_{\a,\beta} $ denotes the classical Mittag-Leffler function. However, the square matrix functions involved in the definitions of variation vectors in fractional optimal control theory are not constant in general, and thus the generalization of the previous results to the non-constant case $A(\cdot) \in \R^{m \times m}$ reveals interests. 

\smallskip

Finally, we prove in Section~\ref{sectransition} duality theorems (see Theorems~\ref{thmduality} and \ref{thmduality2}) that generalize the duality property mentioned in the previous paragraph. These last results state that the left state-transition matrices associated to $A(\cdot) \in \R^{m \times m}$ and to a row constant matrix fractional multi-order $\a \in (0,1]^{m \times m}$ coincide with the right state-transition matrices associated to $A$ and to $\a^\top$, where $\a^\top$ denotes the column constant transpose of $\a$. 

\bigskip

Before detailing our results in Sections~\ref{seccauchyproblems} and \ref{sectransition}, we first give basic recalls on fractional calculus in Section~\ref{sectionbasics}. All proofs of Sections~\ref{seccauchyproblems} and \ref{sectransition} are detailed in Appendices~\ref{appsectionCauchyRL}, \ref{appsectionCauchyC} and \ref{appsectransition}.

\section{Basics on fractional calculus}\label{sectionbasics}
Throughout the paper, the notation $\N^*$ stands for the set of positive integers and the abbreviation R-L stands for Riemann-Liouville. This section is devoted to basic recalls about R-L and Caputo fractional operators. All definitions and results of Section~\ref{secbasics1} are very usual and are mostly extracted from the monographs~\cite{kilb,samk}. In Section~\ref{secbasics1} we focus on R-L and Caputo derivatives of onedimensional functions $q (\cdot ) \in \R$ with a fractional (uni-)order $\a \in [0,1]$. Sections~\ref{secbasics2} and \ref{secmultiorder} are devoted to the generalization of these notions to matrix functions $A (\cdot) \in \R^{m \times n}$ with matrix fractional multi-order $\a \in [0,1]^{m \times n}$, where $m$, $n \in \N^*$. A similar generalization was already considered in the literature, see \textit{e.g.} \cite{daft,lizh}.

\bigskip

We first introduce some notations available throughout the paper. Let $I \subset \R$ be an interval with a nonempty interior and let $m \in \N^*$ be a positive integer. We denote by:
\begin{itemize}
\item $\L^1(I,\R^m)$ the classical Lebesgue space of integrable functions defined on $I$ with values in $\R^m$, endowed with its usual norm $\Vert \cdot \Vert_1$;
\item $\L^\infty(I,\R^m)$ the classical Lebesgue space of essentially bounded functions defined on $I$ with values in $\R^m$;
\item $\C(I,\R^m)$ the classical space of continuous functions defined on $I$ with values in $\R^m$, endowed with the classical uniform norm $\Vert \cdot \Vert_\infty$;
\item $\AC(I,\R^m)$ the classical subspace of $\C(I,\R^m)$ of all absolutely continuous functions defined on $I$ with values in $\R^m$;
\item $\H^\lambda(I,\R^m)$ the classical subspace of $\C(I,\R^m)$ of all $\lambda$-Holderian continuous functions defined on $I$ with values in $\R^m$, where $\lambda \in (0,1]$.
\end{itemize}
Let us consider $\E(I,\R^m)$ one of the above space. We denote by $\E_\loc(I,\R^m)$ the set of all functions $q : I \to \R^m$ such that $q \in \E(J,\R^m)$ for every compact subinterval $J \subset I$. Let us consider $\E(I,\R^m)$ one of the three last above spaces and let $a \in I$. In that case, we denote by $\E_a (I,\R^m)$ the set of all functions $q \in \E(I,\R^m)$ such that $q(a) = 0$.

\subsection{Classical definitions and results in the scalar case}\label{secbasics1}
In this section we fix $a \in \R$ and $I \in \IIm$ where 
$$ \IIm := \{ I \subset \R \text{ interval such that } \{ a \} \varsubsetneq I \subset [a,+\infty) \}. $$
Note that $I$ is not necessarily compact. Precisely $I$ can be written either as $I = [a,+\infty)$, or as $I = [a,b)$ with $b >a$, or as $I= [a,b]$ with $b >a$. 

\begin{definition}
The left R-L fractional integral $\Im [q]$ of order $\a > 0$ of $q \in \L^1_\loc (I,\R)$ is defined on $I$ by
$$ \Im [q](t) := \int_a^t \dfrac{1}{\Gamma(\a)} (t-\t)^{\a-1} q(\t) \; d\t, $$
provided that the right-hand side term exists. For $\alpha = 0$ and $q \in \L^1_\loc (I,\R)$, we define $\I^0_{a+}[q] := q$.
\end{definition}

\noindent If $\a \geq 0$ and $q \in \L^1_\loc (I,\R)$, then $\Im [q](t)$ is defined for almost every $t \in I$.

\smallskip

\noindent For the next results, we refer to \cite[Lemma 2.1 p.72]{kilb} (Propositions~\ref{prop1} and \ref{prop2}), to \cite[Theorem 3.6 p.67]{samk} (Propositions~\ref{prop3} and \ref{prop4}) and to \cite[Lemma 2.3 p.73]{kilb} (Proposition~\ref{prop5}).

\begin{proposition}\label{prop1}
If $\a \geq 0$ and $q \in \L^1_\loc (I,\R)$, then $\Im [q] \in \L^1_\loc (I,\R)$. 
\end{proposition}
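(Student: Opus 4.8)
The plan is to dispose of the trivial case $\a = 0$ first, where $\I^0_{a+}[q] = q \in \L^1_\loc(I,\R)$ holds by hypothesis, and then to treat $\a > 0$ by a direct Tonelli argument on each compact subinterval. Recall that $I \in \IIm$ satisfies $a \in I$ and $I \subset [a,+\infty)$, so any compact subinterval $J \subset I$ has the form $J = [c,d]$ with $a \leq c \leq d$; moreover, since $I$ is an interval containing $a$, the whole segment $[a,d]$ is itself a compact subinterval of $I$, and thus $q \in \L^1([a,d],\R)$. The goal is then to bound $\int_c^d |\Im[q](t)| \, dt$ by a constant multiple of $\int_a^d |q(\t)| \, d\t$, the latter being finite by local integrability of $q$.

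First I would use the pointwise bound $|\Im[q](t)| \leq \frac{1}{\Gamma(\a)} \int_a^t (t-\t)^{\a-1} |q(\t)| \, d\t$ together with the nonnegativity of the integrand, so that Tonelli's theorem applies to the double integral over $\{(t,\t) : c \leq t \leq d, \ a \leq \t \leq t\}$. Exchanging the order of integration, the inner integral over $t$ becomes $\int_{\max(c,\t)}^d (t-\t)^{\a-1}\, dt = \frac{1}{\a}\bigl[(d-\t)^\a - (\max(c,\t)-\t)^\a\bigr] \leq \frac{1}{\a}(d-a)^\a$, a bound uniform in $\t$. This yields $\int_c^d |\Im[q](t)|\, dt \leq \frac{(d-a)^\a}{\a \, \Gamma(\a)} \int_a^d |q(\t)|\, d\t < \infty$, which is exactly the desired conclusion.

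The only point requiring care --- and the single place where $\a > 0$ is genuinely used --- is the finiteness of the inner integral $\int_{\max(c,\t)}^d (t-\t)^{\a-1}\, dt$ near the diagonal $t = \t$: for $0 < \a < 1$ the kernel $(t-\t)^{\a-1}$ is singular there, but it remains integrable precisely because $\int_0^\varepsilon s^{\a-1}\, ds = \varepsilon^\a / \a$ is finite for every $\a > 0$. This is, in essence, the statement that $\Im$ is convolution against the locally integrable kernel $t \mapsto t^{\a-1}/\Gamma(\a)$, and the result may be viewed as a local version of Young's convolution inequality; Tonelli's theorem merely makes the bookkeeping rigorous without requiring the absolute convergence of the defining integral to be established beforehand.
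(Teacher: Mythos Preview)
Your proof is correct; the Tonelli argument on each compact subinterval, with the uniform bound $\int_{\max(c,\t)}^d (t-\t)^{\a-1}\,dt \leq (d-a)^\a/\a$ on the inner integral, is exactly the standard route. The paper itself does not supply a proof for this proposition but refers to \cite[Lemma~2.1 p.~72]{kilb}, where the same computation is carried out.
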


\begin{proposition}\label{prop2}
If $\a \geq 0$ and $q \in \L^\infty_\loc (I,\R)$, then $\Im [q] \in \L^\infty_\loc (I,\R)$. 
\end{proposition}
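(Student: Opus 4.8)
The plan is to bound $\Im[q]$ directly and pointwise, exploiting the local boundedness of $q$ together with the integrability of the singular kernel. First I would dispose of the trivial case $\a = 0$: by definition $\I^0_{a+}[q] = q$, so the claim is immediate. For $\a > 0$, I would fix an arbitrary compact subinterval $J \subset I$, say $J = [c,d]$ with $a \le c \le d$. Since $a \in I$, $d \in J \subset I$ and $I$ is an interval, the compact interval $[a,d]$ is contained in $I$; hence $q \in \L^\infty([a,d],\R)$, and I would set $M := \Vert q \Vert_{\L^\infty([a,d],\R)}$.

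The core of the argument is the estimate, valid for almost every $t \in J$ (indeed for every $t$ at which the integral is defined, which by the remark following the definition is almost everywhere),
\[ |\Im[q](t)| \le \frac{1}{\Gamma(\a)} \int_a^t (t-\t)^{\a-1} |q(\t)| \, d\t \le \frac{M}{\Gamma(\a)} \int_a^t (t-\t)^{\a-1} \, d\t = \frac{M}{\Gamma(\a+1)} (t-a)^\a. \]
Since $t \le d$ gives $(t-a)^\a \le (d-a)^\a$, this furnishes the uniform bound $\Vert \Im[q] \Vert_{\L^\infty(J,\R)} \le M (d-a)^\a / \Gamma(\a+1)$. Measurability of $\Im[q]$ is already granted by Proposition~\ref{prop1} (which places it in $\L^1_\loc(I,\R)$), so this bound yields $\Im[q] \in \L^\infty(J,\R)$; as $J$ was arbitrary, $\Im[q] \in \L^\infty_\loc(I,\R)$.

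There is essentially no serious obstacle here, but the one point that must be checked carefully is that the kernel $(t-\t)^{\a-1}$ is integrable in $\t$ on $[a,t]$. When $0 < \a < 1$ it is singular as $\t \to t$, yet $\int_a^t (t-\t)^{\a-1} \, d\t = (t-a)^\a / \a$ is finite precisely because $\a - 1 > -1$; this is the only place where the fractional order enters the computation, and the change of variable $u = t - \t$ makes it transparent.
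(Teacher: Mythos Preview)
Your proof is correct and is the standard direct argument. The paper does not supply its own proof of this proposition but simply cites \cite[Lemma~2.1, p.~72]{kilb}; the argument there is essentially the one you give, bounding $|q|$ by its $\L^\infty$-norm on $[a,d]$ and integrating the kernel explicitly.
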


\noindent Let $\a \geq 0$ and $q \in \L^1_\loc (I,\R)$. Throughout the paper, if $\Im [q]$ is equal almost everywhere on $I$ to a continuous function on $I$, then $\Im [q]$ is automatically identified to its continuous representative. In that case, $\Im [q](t)$ is defined for every $t \in I$.

\begin{proposition}\label{prop3}
If $\a > 0$ and $q \in \L^\infty_\loc (I,\R)$, then $\I^{\alpha}_{a+} [ q ] \in \H^{\min(\a,1)}_{a,\loc}(I,\R) \subset \C_a(I,\R)$. 
\end{proposition}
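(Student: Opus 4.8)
The plan is to reduce the statement to a single Hölder estimate on intervals of the form $[a,c]$ and then to prove that estimate by comparing the two integral kernels. First I would note that every compact subinterval $J\subset I$ is contained in some $[a,c]\subset I$ (take $c$ to be the right endpoint of $J$), so it suffices to show that $\Im [q]$ is $\min(\a,1)$-Hölderian on each such $[a,c]$. Fix $c\in I$ with $c>a$, set $J:=[a,c]$ and $M:=\Vert q\Vert_{\L^\infty(J)}$. Since $\a>0$, the kernel $\t\mapsto(t-\t)^{\a-1}$ is integrable on $[a,t]$, so $\Im [q](t)$ is an absolutely convergent integral for \emph{every} $t\in J$, and $\Im [q](a)=0$ by convention. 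Hence membership in $\C_a(I,\R)$ will follow for free once continuity is established, and the inclusion $\H^{\min(\a,1)}_{a,\loc}(I,\R)\subset\C_a(I,\R)$ is just ``Hölder continuous implies continuous''.

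The core is the bound on $|\Im [q](t_2)-\Im [q](t_1)|$ for $a\le t_1<t_2\le c$. I would write $\Gamma(\a)\bigl(\Im [q](t_2)-\Im [q](t_1)\bigr)$ as a \emph{tail} integral $\int_{t_1}^{t_2}(t_2-\t)^{\a-1}q(\t)\,d\t$ plus an \emph{overlap} integral $\int_a^{t_1}\bigl[(t_2-\t)^{\a-1}-(t_1-\t)^{\a-1}\bigr]q(\t)\,d\t$. The tail is controlled at once by $\frac{M}{\a}(t_2-t_1)^\a$. For the overlap, the explicit primitives $\int_a^{t_1}(t_i-\t)^{\a-1}\,d\t$ evaluate $\int_a^{t_1}[(t_1-\t)^{\a-1}-(t_2-\t)^{\a-1}]\,d\t$ exactly as $\frac{1}{\a}\bigl[(t_1-a)^\a-(t_2-a)^\a+(t_2-t_1)^\a\bigr]$, so the overlap is bounded in absolute value by $\frac{M}{\a}\bigl|(t_2-a)^\a-(t_1-a)^\a-(t_2-t_1)^\a\bigr|$.

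The two regimes must then be separated, which is the only genuine subtlety. For $0<\a\le 1$ the kernel $t\mapsto(t-\t)^{\a-1}$ is nonincreasing, so the overlap integrand has constant sign, and using just the monotonicity $(t_1-a)^\a\le(t_2-a)^\a$ (equivalently the subadditivity $v^\a-u^\a\le(v-u)^\a$) one bounds the overlap by $\frac{M}{\a}(t_2-t_1)^\a$; adding the tail gives $|\Im [q](t_2)-\Im [q](t_1)|\le\frac{2M}{\Gamma(\a+1)}(t_2-t_1)^\a$, the $\min(\a,1)=\a$ bound. For $\a>1$ the kernel is nondecreasing in $t$; here I would use the mean value theorem for $x\mapsto x^\a$ to get $(t_2-a)^\a-(t_1-a)^\a\le\a(c-a)^{\a-1}(t_2-t_1)$ and bound $(t_2-t_1)^\a\le(c-a)^{\a-1}(t_2-t_1)$, yielding a Lipschitz estimate $|\Im [q](t_2)-\Im [q](t_1)|\le C\,(t_2-t_1)$ with $C=C(M,\a,c-a)$, the $\min(\a,1)=1$ bound.

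The main obstacle is precisely this overlap term together with the regime split at $\a=1$: the monotonicity of $(t-\t)^{\a-1}$ in $t$ reverses there, and the sharp Hölder exponent saturates at $1$, which is what forces the value $\min(\a,1)$ and two different elementary inequalities (subadditivity of $x^\a$ when $\a\le1$, the mean value bound when $\a>1$); by contrast the singularity of the kernel at $\t=t$ is harmless, being integrable for all $\a>0$. Finally, setting $t_1=a$ gives the uniform decay $|\Im [q](t)|\le\frac{M}{\Gamma(\a+1)}(t-a)^\a\to0$ as $t\to a$, confirming continuity at $a$ with $\Im [q](a)=0$, hence $\Im [q]\in\C_a(I,\R)$; collecting the estimates over all admissible $c$ gives $\Im [q]\in\H^{\min(\a,1)}_{a,\loc}(I,\R)$, as claimed.
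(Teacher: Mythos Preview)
Your argument is correct: the tail--overlap decomposition of $\Gamma(\a)\bigl(\Im[q](t_2)-\Im[q](t_1)\bigr)$, the exact evaluation of the overlap as $\frac{1}{\a}\bigl[(t_1-a)^\a-(t_2-a)^\a+(t_2-t_1)^\a\bigr]$, and the regime split at $\a=1$ (subadditivity of $x\mapsto x^\a$ for $\a\le 1$, mean value theorem for $\a>1$) all go through as written, yielding the stated H\"older exponent $\min(\a,1)$ on each $[a,c]$ and hence on every compact subinterval of $I$.

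The paper itself does not prove this proposition; it simply cites \cite[Theorem~3.6, p.~67]{samk}. Your proof is precisely the classical direct estimate that one finds in that reference, so in substance there is nothing to compare: you have reconstructed the standard argument.
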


\begin{proposition}\label{prop4}
If $0 < \a \leq 1$ and $q \in \L^\infty (I,\R)$, then $\I^{\alpha}_{a+} [ q ] \in \H^{\a}_{a}(I,\R) \subset \C_a(I,\R)$. 
\end{proposition}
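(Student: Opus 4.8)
The plan is to read off the $\a$-Hölder estimate directly from the integral definition, being careful to track that the resulting Hölder constant is \emph{uniform} over all of $I$; this uniformity (rather than mere local control) is exactly what the global hypothesis $q \in \L^\infty(I,\R)$ — as opposed to $\L^\infty_\loc$ in Proposition~\ref{prop3} — buys us, and it is what matters when $I$ is unbounded. Write $M := \Vert q \Vert_{\L^\infty}$ for the essential supremum of $|q|$ on $I$. By symmetry it suffices to fix $s$, $t \in I$ with $a \le s \le t$ and to bound $\Im[q](t) - \Im[q](s)$. Splitting the domain of the first integral at $s$, I would write
\[
\Gamma(\a)\bigl(\Im[q](t) - \Im[q](s)\bigr) = \int_a^s \bigl[(t-\t)^{\a-1} - (s-\t)^{\a-1}\bigr] q(\t)\,d\t + \int_s^t (t-\t)^{\a-1} q(\t)\,d\t,
\]
so the problem reduces to estimating the two resulting terms.

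The second (``tail'') term is immediate: bounding $|q|$ by $M$ and integrating gives $\bigl| \int_s^t (t-\t)^{\a-1}q\,d\t \bigr| \le M (t-s)^\a/\a$. The first (``overlap'') term is the crux, and it is where the hypothesis $\a \le 1$ enters. Since $\a - 1 \le 0$, the map $x \mapsto x^{\a-1}$ is nonincreasing on $(0,+\infty)$, so for $\t \in (a,s)$ one has $(t-\t)^{\a-1} \le (s-\t)^{\a-1}$; hence the bracket keeps a constant sign and
\[
\Bigl| \int_a^s \bigl[(t-\t)^{\a-1} - (s-\t)^{\a-1}\bigr] q(\t)\,d\t \Bigr| \le M \int_a^s \bigl[(s-\t)^{\a-1} - (t-\t)^{\a-1}\bigr]\,d\t = \frac{M}{\a}\bigl[(s-a)^\a + (t-s)^\a - (t-a)^\a\bigr].
\]
Because $s \le t$ forces $(s-a)^\a \le (t-a)^\a$, the bracket is bounded above by $(t-s)^\a$, giving the bound $M(t-s)^\a/\a$ for the overlap term as well. (Equivalently, one invokes the subadditivity of $x \mapsto x^\a$, valid precisely because $0 < \a \le 1$.)

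Combining the two estimates yields $|\Im[q](t) - \Im[q](s)| \le \tfrac{2M}{\Gamma(\a+1)}(t-s)^\a$ for all $s$, $t \in I$, with a constant depending only on $M$ and $\a$ and \emph{independent of} $s$, $t$. This is exactly global $\a$-Hölder continuity, i.e. $\Im[q] \in \H^\a(I,\R)$. To finish, I would note that $\Im[q](a) = 0$ holds directly from the definition (the integral over the degenerate interval $[a,a]$ vanishes), so in fact $\Im[q] \in \H^\a_a(I,\R)$; the inclusion $\H^\a_a(I,\R) \subset \C_a(I,\R)$ is then immediate, since Hölder continuity implies continuity and the vanishing at $a$ is preserved.

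I expect the only genuine obstacle to be the sign/monotonicity argument for the overlap term: this is precisely the step that breaks down for $\a > 1$, and it explains why the global exponent is capped at $\a$ here (and at $\min(\a,1)$ in Proposition~\ref{prop3}). Everything else is routine bookkeeping with the Beta-type integrals.
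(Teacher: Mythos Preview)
Your argument is correct. Note, however, that the paper does not actually supply its own proof of this proposition: it merely cites \cite[Theorem~3.6 p.67]{samk} for both Propositions~\ref{prop3} and~\ref{prop4}. The computation you give --- splitting $\Im[q](t)-\Im[q](s)$ into an overlap integral on $[a,s]$ and a tail integral on $[s,t]$, then using the monotonicity of $x\mapsto x^{\a-1}$ for $\a\le 1$ to control the overlap --- is precisely the standard argument that appears in that reference, so there is no methodological divergence to speak of. Your emphasis on the \emph{uniformity} of the H\"older constant (depending only on $\Vert q\Vert_{\L^\infty}$ and $\a$, not on any compact subinterval) is exactly the point that distinguishes Proposition~\ref{prop4} from Proposition~\ref{prop3}, and you have identified it correctly.
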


\begin{proposition}\label{prop5}
If $\a_1 \geq 0$, $\a_2 \geq 0$ and $q \in \L^1_\loc (I,\R)$, then 
$$ \I^{\alpha_1}_{a+} \Big[ \I^{\alpha_2}_{a+} [ q ] \Big] = \I^{\alpha_1 +\alpha_2}_{a+} [ q ], $$
almost everywhere on $I$. If moreover $q \in \L^\infty_\loc (I,\R)$ and $\a_1 + \a_2 > 0$, the above equality is satisfied everywhere on $I$.
\end{proposition}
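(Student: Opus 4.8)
The plan is to treat the degenerate orders separately and then reduce the main statement to a single application of Fubini's theorem combined with the classical Beta--Gamma identity.

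First I would dispose of the cases $\a_1 = 0$ or $\a_2 = 0$. If $\a_1 = 0$, the definition $\I^0_{a+}[\cdot] = \cdot$ gives $\I^0_{a+}[\I^{\a_2}_{a+}[q]] = \I^{\a_2}_{a+}[q] = \I^{0+\a_2}_{a+}[q]$, and symmetrically for $\a_2 = 0$. Hence I may assume $\a_1, \a_2 > 0$. By Proposition~\ref{prop1}, $\I^{\a_2}_{a+}[q] \in \L^1_\loc(I,\R)$, so the outer integral defining $\I^{\a_1}_{a+}[\I^{\a_2}_{a+}[q]]$ is meaningful for almost every $t \in I$, and likewise $\I^{\a_1+\a_2}_{a+}[q]$ is defined almost everywhere.

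Next I would carry out the core computation. For those $t \in I$ at which the relevant integrals converge, I expand
$$\I^{\a_1}_{a+}\big[\I^{\a_2}_{a+}[q]\big](t) = \frac{1}{\Gamma(\a_1)\Gamma(\a_2)}\int_a^t (t-\t)^{\a_1-1}\int_a^\t (\t-s)^{\a_2-1}q(s)\,ds\,d\t,$$
then interchange the order of integration over the triangle $\{ a \le s \le \t \le t \}$ and evaluate the inner integral by the substitution $\t = s + u(t-s)$, which turns it into $(t-s)^{\a_1+\a_2-1}\B(\a_2,\a_1)$. Using $\B(\a_2,\a_1) = \Gamma(\a_1)\Gamma(\a_2)/\Gamma(\a_1+\a_2)$ collapses the constants and yields exactly $\I^{\a_1+\a_2}_{a+}[q](t)$.

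The step that genuinely requires care --- and the main obstacle --- is justifying the interchange of integrals, which I would handle by Tonelli's theorem applied to the nonnegative integrand $(t-\t)^{\a_1-1}(\t-s)^{\a_2-1}|q(s)|$. Running the same change of variables on the absolute values shows that its iterated integral equals $\I^{\a_1+\a_2}_{a+}[|q|](t)$ up to the same constant; since $|q| \in \L^1_\loc(I,\R)$, Proposition~\ref{prop1} guarantees this is finite for almost every $t \in I$. Thus Fubini applies at almost every $t$, delivering the claimed almost-everywhere identity. On a fixed compact subinterval $[a,T] \subset I$ one checks this integrability once and for all; the singularities of the two kernels are harmless precisely because $\a_1 - 1 > -1$ and $\a_2 - 1 > -1$. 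Finally, for the upgrade to an everywhere identity under $q \in \L^\infty_\loc(I,\R)$ and $\a_1 + \a_2 > 0$, I would show that both sides are continuous and then use that two continuous functions agreeing almost everywhere on $I$ agree everywhere. The right-hand side $\I^{\a_1+\a_2}_{a+}[q]$ lies in $\C_a(I,\R)$ by Proposition~\ref{prop3}. For the left-hand side: if $\a_2 = 0$ it equals $\I^{\a_1}_{a+}[q]$ and if $\a_1 = 0$ it equals $\I^{\a_2}_{a+}[q]$, each continuous by Proposition~\ref{prop3}; and if $\a_1, \a_2 > 0$, then $\I^{\a_2}_{a+}[q] \in \L^\infty_\loc(I,\R)$ by Proposition~\ref{prop2}, so $\I^{\a_1}_{a+}[\I^{\a_2}_{a+}[q]] \in \C_a(I,\R)$ again by Proposition~\ref{prop3}. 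In every case both members are continuous, so the almost-everywhere equality becomes an equality on all of $I$.
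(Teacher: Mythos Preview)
The paper does not actually supply a proof of this proposition; it simply refers the reader to \cite[Lemma~2.3 p.73]{kilb}. Your argument is correct and is precisely the standard proof one finds in that reference (Tonelli/Fubini on the triangle plus the Beta--Gamma identity), and your upgrade to an everywhere equality via Propositions~\ref{prop2} and~\ref{prop3} is the right way to handle the second assertion.
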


\begin{definition}
We say that $q \in \L^1_\loc (I,\R)$ possesses on $I$ a left R-L fractional derivative $\Dm [q]$ of order $0 \leq \a \leq 1$ if and only if $\I^{1-\a}_{a+} [q] \in \AC_\loc (I,\R)$. In that case $\Dm [q]$ is defined by
$$ \Dm [q](t) := \dfrac{d}{dt} \Big[ \I^{1-\a}_{a+} [q] \Big] (t) , $$
for almost every $t \in I$. In particular $\Dm [q] \in \L^1_\loc(I,\R)$.
\end{definition}

\begin{definition}
Let $0 \leq \a \leq 1$. We denote by $\AC^\alpha_{a+}(I,\R)$ the set of all functions $q \in \L^1_\loc (I,\R)$ possessing on $I$ a left R-L fractional derivative $\Dm [q]$ of order $\a$.
\end{definition}

\noindent If $\a = 1$, $\AC^1_{a+}(I,\R) = \AC_\loc (I,\R)$ and $\D^1_{a+} [q] = \dot{q}$ for any $q \in \AC_\loc (I,\R)$.

\noindent If $\a = 0$, $\AC^0_{a+}(I,\R) = \L^1_\loc (I,\R)$ and $\D^0_{a+} [q] = q$ for any $q \in \L^1_\loc (I,\R)$.

\begin{definition}
We say that $q \in \C(I,\R)$ possesses on $I$ a left Caputo fractional derivative $\CDm [q]$ of order $0 \leq \a \leq 1$ if and only if $q-q(a) \in \AC^\a_{a+} (I,\R)$. In that case $\CDm [q]$ is defined by
$$ \CDm [q](t) := \Dm [q-q(a)] (t) , $$
for almost every $t \in I$. In particular $\CDm [q] \in \L^1_\loc(I,\R)$.
\end{definition}

\begin{definition}
Let $0 \leq \a \leq 1$. We denote by ${}_\c \AC^\alpha_{a+}(I,\R)$ the set of all functions $q \in \C(I,\R)$ possessing on $I$ a left Caputo fractional derivative $\CDm [q]$ of order $\a$.
\end{definition}

\noindent If $\a = 1$, ${}_\c \AC^1_{a+}(I,\R) = \AC_\loc (I,\R)$ and  ${}_\c \D^1_{a+} [q] = \dot{q}$ for any $q \in \AC_\loc (I,\R)$.

\noindent If $\a = 0$, ${}_\c \AC^0_{a+}(I,\R) = \C(I,\R)$ and ${}_\c \D^0_{a+} [q] = q-q(a)$ for any $q \in \C(I,\R)$.

\begin{example}\label{ex1}
The constant function $q = 1 \in \AC^\alpha_{a+}(I,\R) \cap {}_\c \AC^\alpha_{a+}(I,\R)$ for any $0 \leq \a \leq 1$. It holds that $\CDm [1] = 0$ for any $0 \leq \a \leq 1$. We also have $\D^1_{a+}[1] =0$ and
$$ \D^\a_{a+} [1] (t) = \dfrac{1}{\Gamma (1-\a)} (t-a)^{-\a}, $$
for any $0 \leq \a < 1$ and for every $ t \in I$, $t > a$. 
\end{example}

\subsection{Some preliminaries on matrix computations}\label{secbasics2}
In this section we fix $m$, $n$, $k \in \N^*$. For any couple of matrices $A = (A_{ij}) \in \R^{m \times n}$, $B = (B_{ij}) \in \R^{n \times k}$, we denote by $A \times B \in \R^{m \times k}$ the usual matrix-matrix product. The notation $\times$ will also be used for the classical matrix-vector product (\textit{i.e.} for $k=1$). 

\smallskip

For any couple of same size matrices $A = (A_{ij})$, $B = (B_{ij}) \in \R^{m \times n}$, we denote by $A \otimes B$ the classical Hadamard product given by
$$ A \otimes B := \left( 
\begin{array}{ccc}
A_{11} B_{11} & \cdots & A_{1n} B_{1n} \\
\vdots & \ddots & \vdots \\
A_{m1} B_{m1} & \cdots & A_{mn} B_{mn}
\end{array}
\right) \in \R^{m \times n}. $$

For a vector $A := (A_i) \in \R^{m,1}$, we denote by $\overline{A} \in \R^{m \times m}$ the row constant square matrix given by
$$ \overline{A} := \left( 
\begin{array}{cccc}
A_{1} & A_1 & \cdots & A_{1} \\
A_{2} & A_2 & \cdots & A_{2} \\
\vdots & \vdots & \ddots & \vdots \\
A_{m} & A_{m} & \cdots & A_{m}
\end{array}
\right) , $$
and by $\underline{A} \in \R^{m \times m}$ the column constant square matrix given by $ \underline{A} := \overline{A}^\top$, where $\overline{A}^\top$ denotes the transpose of $\overline{A}$. One can easily prove the following series of lemmas. They will be useful in particular in Appendix~\ref{appsectransition}. 

\begin{lemma}\label{lemcalcul4}
Let $A = (A_{i}) \in \R^{m \times 1}$ be a vector. Then 
$$ \overline{A} \otimes \ID_m = \underline{A} \otimes \ID_m .$$
\end{lemma}

\begin{lemma}\label{lemcalcul}
Let $A = (A_{i}) \in \R^{m \times 1}$ be a vector and $B = (B_{ij}) \in \R^{m \times m}$ be a square matrix. Then 
$$ (\overline{A} \otimes B) \times X = A \otimes (B \times X) , $$
for any vector $X = (X_{i}) \in \R^{m \times 1}$.
\end{lemma}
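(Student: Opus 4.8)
The plan is to prove the identity entrywise, since both sides are column vectors in $\R^{m \times 1}$ and it suffices to check that they share the same $i$-th component for each $i \in \{1,\dots,m\}$. First I would unwind the two products appearing on the left-hand side. By definition of the row constant matrix, $(\overline{A})_{ij} = A_i$ for every $i$ and every $j$, so the Hadamard product has entries $(\overline{A} \otimes B)_{ij} = (\overline{A})_{ij} B_{ij} = A_i B_{ij}$. Applying the matrix-vector product then gives, for each fixed $i$,
$$
\big( (\overline{A} \otimes B) \times X \big)_i = \sum_{j=1}^m (\overline{A} \otimes B)_{ij} \, X_j = \sum_{j=1}^m A_i B_{ij} X_j .
$$

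The key observation is that the factor $A_i$ does not depend on the summation index $j$ — this is exactly the feature of $\overline{A}$ being row constant — so it can be pulled out of the sum. I would then recognize the remaining sum as the $i$-th component of the matrix-vector product $B \times X$, which yields
$$
\big( (\overline{A} \otimes B) \times X \big)_i = A_i \sum_{j=1}^m B_{ij} X_j = A_i \, (B \times X)_i .
$$
Finally, since the Hadamard product of two column vectors of $\R^{m \times 1}$ is simply the entrywise product, the right-hand component $A_i \, (B \times X)_i$ is precisely $\big( A \otimes (B \times X) \big)_i$, and the two sides agree for every $i$, establishing the claim.

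This is an entirely routine computation, so I do not anticipate any genuine obstacle; the only point requiring a little care is bookkeeping the meaning of $\otimes$ in the two places it effectively occurs — as a Hadamard product of the $m \times m$ matrices $\overline{A}$ and $B$ on one side, and as the entrywise product of the two column vectors $A$ and $B \times X$ on the other — and making explicit that the factorization of $A_i$ out of the sum is legitimate precisely because the $i$-th row of $\overline{A}$ is constant equal to $A_i$. Lemma~\ref{lemcalcul4} is not needed here, but the same row constant structure underlies its proof as well.
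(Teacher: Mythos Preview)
Your argument is correct and is exactly the direct entrywise verification one would expect; the paper itself omits the proof, merely stating that the series of lemmas (including this one) ``can easily'' be proved. There is nothing to compare: your computation is the natural one, and your remark about the factor $A_i$ being independent of $j$ pinpoints precisely why the row-constant structure of $\overline{A}$ makes the identity work.
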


\begin{lemma}\label{lemcalcul2}
Let $A = (A_{i}) \in \R^{m \times 1}$, $B = (B_{i}) \in \R^{m \times 1}$ be two vectors and $C = (C_{ij}) \in \R^{m \times m}$ be a square matrix. Then
$$ \underline{A} \otimes \Big[ ( \overline{B} \otimes \ID_m) \times C \Big] = \overline{B} \otimes \Big[ C \times (\underline{A} \otimes \ID_m ) \Big] .$$
\end{lemma}

\begin{lemma}\label{lemcalcul3}
Let $A = (A_{i}) \in \R^{m \times 1}$, $B = (B_{i}) \in \R^{m \times 1}$ be two vectors and $C = (C_{ij}) \in \R^{m \times m}$, $D = (D_{ij}) \in \R^{m \times m}$ and $E = (E_{ij}) \in \R^{m \times m}$ be three square matrices. Then
$$ \underline{A} \otimes \Big[ \Big( \overline{B} \otimes ( C \times D ) \Big) \times E \Big] = \overline{B} \otimes \Big[ C \times \Big( \underline{A} \otimes ( D \times E ) \Big) \Big] .$$
\end{lemma}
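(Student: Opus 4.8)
The plan is to reduce every Hadamard product appearing in the identity to an ordinary matrix product by a diagonal matrix, after which the claim follows from nothing more than the associativity of the matrix-matrix product.

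First I would record the two structural identities that make the statement transparent. The matrices $\overline{B} \otimes \ID_m$ and $\underline{A} \otimes \ID_m$ are exactly the diagonal matrices $\mathrm{diag}(B_1, \dots, B_m)$ and $\mathrm{diag}(A_1, \dots, A_m)$ respectively (their coincidence being recorded in Lemma~\ref{lemcalcul4}). Since $\overline{B}$ is row constant, for any square matrix $M = (M_{ij}) \in \R^{m \times m}$ one has $(\overline{B} \otimes M)_{ij} = B_i M_{ij}$, and hence
$$ \overline{B} \otimes M = (\overline{B} \otimes \ID_m) \times M. $$
Symmetrically, since $\underline{A}$ is column constant, $(\underline{A} \otimes M)_{ij} = A_j M_{ij}$, and hence
$$ \underline{A} \otimes M = M \times (\underline{A} \otimes \ID_m). $$
In words: Hadamard multiplication by a row-constant matrix is left multiplication by the corresponding diagonal matrix, while Hadamard multiplication by a column-constant matrix is right multiplication by it.

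With these two identities in hand I would rewrite each side of the claimed equality. For the left-hand side, applying the row-constant identity with $M = C \times D$ gives $\overline{B} \otimes (C \times D) = (\overline{B} \otimes \ID_m) \times C \times D$; multiplying on the right by $E$ and then applying the column-constant identity yields
$$ \underline{A} \otimes \Big[ \big( \overline{B} \otimes (C \times D) \big) \times E \Big] = (\overline{B} \otimes \ID_m) \times C \times D \times E \times (\underline{A} \otimes \ID_m). $$
For the right-hand side, applying the column-constant identity with $M = D \times E$ gives $\underline{A} \otimes (D \times E) = D \times E \times (\underline{A} \otimes \ID_m)$; multiplying on the left by $C$ and then applying the row-constant identity produces exactly the same product
$$ \overline{B} \otimes \Big[ C \times \big( \underline{A} \otimes (D \times E) \big) \Big] = (\overline{B} \otimes \ID_m) \times C \times D \times E \times (\underline{A} \otimes \ID_m). $$
Comparing the two expressions closes the proof.

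There is no genuine obstacle here; the only point requiring care is bookkeeping. One must be sure that $\overline{B}$ acts through its row index $i$ (hence on the left) and that $\underline{A}$ acts through its column index $j$ (hence on the right), since swapping these roles would be fatal. An equivalent and equally safe route is to compute the $(i,j)$-entry of both sides directly: both reduce to $B_i A_j \sum_{k,l} C_{ik} D_{kl} E_{lj} = B_i A_j (C \times D \times E)_{ij}$, which I would use as a cross-check.
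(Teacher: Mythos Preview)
Your argument is correct: the reduction of $\overline{B}\otimes(\cdot)$ to left multiplication by $\mathrm{diag}(B)$ and of $\underline{A}\otimes(\cdot)$ to right multiplication by $\mathrm{diag}(A)$ is valid, and associativity of the matrix product then gives the identity; the entrywise cross-check $B_iA_j(C\times D\times E)_{ij}$ is also fine. The paper itself offers no proof of this lemma (it simply states that the series of Lemmas~\ref{lemcalcul4}--\ref{lemcalcul3} ``can easily [be] prove[d]''), so there is nothing to compare against; your write-up is exactly the kind of short verification the authors had in mind.
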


\subsection{Multi-order fractional calculus for matrix functions}\label{secmultiorder}

In the whole section we fix $a \in \R$, $I \in \IIm$ and $m$, $n \in \N^*$. 

\begin{definition}
The left R-L fractional integral $\Im [A]$ of multi-order $\a = (\a_{ij}) \in (\R^+)^{m \times n}$ of a matrix function $A = (A_{ij}) \in \L^1_\loc (I,\R^{m \times n})$ is defined by
$$ \Im [A](t) := \left(
\begin{array}{ccc}
\I^{\alpha_{11}}_{a+} [A_{11}](t) & \cdots & \I^{\alpha_{1n}}_{a+} [A_{1n}](t) \\
\vdots & \ddots & \vdots \\
\I^{\alpha_{m1}}_{a+} [A_{m1}](t) & \cdots & \I^{\alpha_{mn}}_{a+} [A_{mn}](t)
\end{array}
\right) , $$
for almost every $t \in I$.
\end{definition}

\noindent For the ease of notations, we introduce
$$ \left[ \dfrac{1}{\Gamma(\a)} (t-\t)^{\a-1} \right] := \left( \dfrac{1}{\Gamma(\a_{ij})} (t-\t)^{\a_{ij}-1} \right)_{ij} \in \R^{m \times n} , $$
and we write
$$ \Im [A](t) = \int_a^t \left[ \dfrac{1}{\Gamma(\a)} (t-\t)^{\a-1} \right] \otimes A(\t) \; d\t, $$
for every $\a = (\a_{ij}) \in (\R^+_*)^{m \times n}$ and $A = (A_{ij}) \in \L^1_\loc (I,\R^{m \times n})$.

\bigskip

Similarly to Section~\ref{secbasics1}, one can easily define the corresponding operators $\Dm$, $\CDm$ and the corresponding sets $\AC^\a_{a+}(I,\R^{m \times n})$, ${}_\c \AC^\a_{a+}(I,\R^{m \times n})$ for any matrix fractional multi-order $\a = (\a_{ij}) \in [0,1]^{m \times n}$. All statements of Section~\ref{secbasics1} can be extended to matrix functions and to matrix fractional multi-orders. 

\section{Two non-linear fractional multi-order vector Cauchy problems}\label{seccauchyproblems}
In the whole section we are interested in non-linear fractional multi-order Cauchy problems. Since the dynamics are non-linear, it is not of interest to consider matrix Cauchy problems. Indeed, $\R^{m \times n}$ can be identified to $\R^{mn}$ and it is sufficient to consider vector Cauchy problems. As a consequence, we fix in this section $m \in \N^*$ and $n=1$. The notation $\vert \cdot \vert_m$ stands for the Euclidean norm of $\R^m$ and $\BB_m(x,R)$ stands for the closed ball of $\R^m$ centered at $x \in \R^m$ and with radius $R > 0$.

\bigskip

Let $a \in \R$ and let $f:\Omega \times I_f \longrightarrow \R^m$, $(x,t) \longmapsto f(x,t)$ be a Carath\'eodory function, where $I_f \in \IIm$ and $\Omega$ is a nonempty open subset of $\R^m$. Finally, let $q_a \in \Omega$ and let $\alpha = (\a_1,\ldots,\a_m) \in (0,1]^m$ be a vector fractional multi-order. In this section we are interested in two different non-linear fractional multi-order vector Cauchy problems.
\begin{itemize}
\item The first vector Cauchy problem~\eqref{eqcauchyproblemRL} is given by
\begin{equation}\label{eqcauchyproblemRL}\tag{VCP}
\left\lbrace \begin{array}{l}
\Dm [q](t) = f(q(t),t),  \\
\I^{1-\a}_{a+} [q](a) = q_a, 
\end{array}
\right.
\end{equation}
that involves a R-L fractional derivative $\Dm$ and the initial condition $\I^{1-\a}_{a+} [q](a) = q_a$. We will study this problem in Section~\ref{sectionCauchyRL}, only in the case $\Omega = \R^m$. 
\item The second vector Cauchy problem~\eqref{eqcauchyproblemC} is given by 
\begin{equation}\label{eqcauchyproblemC}\tag{${}_\c$VCP}
\left\lbrace \begin{array}{l}
\CDm [q](t) = f(q(t),t),  \\
q(a) = q_a, 
\end{array}
\right.
\end{equation}
that involves a Caputo fractional derivative $\CDm$ and the initial condition $q(a) = q_a$. We will study this problem in Section~\ref{sectionCauchyC}. In Section~\ref{sectionCauchyC}, in contrary to Section~\ref{sectionCauchyRL}, we will not restrict $\Omega$ to be the entire space $\R^m$.
\end{itemize}

\subsection{An existence-uniqueness result for \eqref{eqcauchyproblemRL}}\label{sectionCauchyRL}

In the whole section we assume that $\Omega = \R^m$. All proofs of this section are detailed in Appendix~\ref{appsectionCauchyRL}.

\subsubsection{Properties of the dynamic $f$}
As in the classical Cauchy-Lipschitz theory, the existence and uniqueness of a solution of~\eqref{eqcauchyproblemRL} require some assumptions on the dynamic $f$, whence the following series of definitions. 

\begin{definition}
The dynamic $f$ is said to be \emph{preserving the integrability of zero} if 
\begin{equation}\label{HL1zero}\tag{Hyp${}^0_{1}$}
f(0,\cdot) \in \L^1_\loc(I_f,\R^m). 
\end{equation}
In what follows this property will be referred to as~\eqref{HL1zero}.
\end{definition}

\begin{definition}
The dynamic $f$ is said to be \emph{preserving the integrability} if 
\begin{equation}\label{HL1}\tag{Hyp${}_{1}$}
f(q,\cdot) \in \L^1_\loc(I_f,\R^m) , 
\end{equation}
for any $q \in \L^1_\loc(I_f,\R^m)$. In what follows this property will be referred to as~\eqref{HL1}.
\end{definition}

\begin{definition}
The dynamic $f$ is said to be \emph{globally Lipschitz continuous in its first variable} if for any $[c,d] \subset I_f$, there exists $L \geq 0$ such that
\begin{equation}\label{HgloblipRL}\tag{Hyp$_{\mathrm{glob}}$}
\vert f(x_2,t) - f(x_1,t) \vert_m \leq L \vert x_2 - x_1 \vert_m,
\end{equation}
for any $x_1$, $x_2\in \R^m$ and for almost every $t \in [c,d]$. In what follows this property will be referred to as~\eqref{HgloblipRL}.
\end{definition}

\noindent Note that if $f$ satisfies~\eqref{HgloblipRL}, then $f$ satisfies~\eqref{HL1} if and only if $f$ satisfies \eqref{HL1zero}.

\subsubsection{Definition of a global solution and main results}
We introduce here a notion of (global) solution of~\eqref{eqcauchyproblemRL}.

\begin{definition}
A function $q : I_f \to \R^m$ is said to be a \textit{(global) solution} of~\eqref{eqcauchyproblemRL} if and only if
\begin{itemize}
\item $q \in \AC^\a_{a+}(I_f,\R^m)$;
\item $\I^{1-\a}_{a+}[q](a) = q_a$;
\item $\Dm [q](t) = f(q(t),t)$ for almost every $t \in I_f$.
\end{itemize}
\end{definition}
\smallskip
\noindent The following proposition gives an integral representation for (global) solutions of~\eqref{eqcauchyproblemRL}.

\begin{proposition}[Integral representation]\label{prop1RL}
If $f$ satisfies~\eqref{HL1}, a function $q : I_f \to \R^m$ is a \textit{(global) solution} of~\eqref{eqcauchyproblemRL} if and only if $q \in \L^1_\loc(I_f,\R^m)$ and
\begin{eqnarray*}
q(t) & = & \D^{1-\alpha}_{a+}[ q_a ] (t) + \Im [ f(q,\cdot) ](t) , \\[4pt]
& = & \left[ \dfrac{1}{\Gamma (\a) } (t-a)^{\a-1} \right] \otimes q_a + \int_a^t \left[ \dfrac{1}{\Gamma(\a)} (t-\t)^{\a-1} \right] \otimes f(q(\t),\t) \; d\t,
\end{eqnarray*}
for almost every $t \in I_f$.
\end{proposition}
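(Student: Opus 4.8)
The plan is to prove the stated equivalence coordinate by coordinate, so that it suffices to invoke the scalar statements of Section~\ref{secbasics1} for the $i$-th component with its own order $\a_i \in (0,1]$; the Hadamard products and the bracket kernels only encode this component-wise reading. First I would note that, under \eqref{HL1}, any $q \in \L^1_\loc(I_f,\R^m)$ gives $f(q,\cdot) \in \L^1_\loc(I_f,\R^m)$, whence $\I^{1-\a}_{a+}[q]$, $\I^\a_{a+}[f(q,\cdot)]$ and $\I^1_{a+}[f(q,\cdot)]$ all lie in $\L^1_\loc$ by Proposition~\ref{prop1}, so every expression below is meaningful. The backbone of the argument is the intermediate \emph{once-integrated identity}
\[ \I^{1-\a}_{a+}[q](t) = q_a + \I^1_{a+}[f(q,\cdot)](t) \quad \text{for a.e. } t \in I_f, \]
and I would show that being a global solution of \eqref{eqcauchyproblemRL} is equivalent to this identity, which is in turn equivalent to the claimed Volterra representation.

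For the first equivalence (solution $\Leftrightarrow$ integrated identity), if $q$ is a global solution then $g := \I^{1-\a}_{a+}[q] \in \AC_\loc(I_f,\R^m)$ with $\dot g = \Dm[q] = f(q,\cdot)$ a.e. and $g(a) = q_a$; the fundamental theorem of calculus on each compact subinterval then yields the integrated identity, the right-hand side being $q_a + \int_a^\cdot f(q,\cdot)$. Conversely, if the integrated identity holds, its right-hand side is a constant plus the primitive of an $\L^1_\loc$ function, hence belongs to $\AC_\loc$; by the identification convention fixed in Section~\ref{secbasics1}, $\I^{1-\a}_{a+}[q]$ is then identified with that $\AC_\loc$ representative, i.e. $q \in \AC^\a_{a+}(I_f,\R^m)$, and differentiating recovers $\Dm[q] = f(q,\cdot)$ a.e. while evaluation at $t = a$ recovers $\I^{1-\a}_{a+}[q](a) = q_a$.

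For the second equivalence (integrated identity $\Leftrightarrow$ Volterra representation), to pass forward I would apply $\I^\a_{a+}$ and use the semigroup property (Proposition~\ref{prop5}), namely $\I^\a_{a+}\I^{1-\a}_{a+}[q] = \I^1_{a+}[q]$ and $\I^\a_{a+}\I^1_{a+}[f(q,\cdot)] = \I^1_{a+}\I^\a_{a+}[f(q,\cdot)]$, together with $\I^\a_{a+}[q_a](t) = \frac{q_a}{\Gamma(\a+1)}(t-a)^\a$; differentiating the resulting identity (both sides being $\AC_\loc$) gives $q(t) = \frac{q_a}{\Gamma(\a)}(t-a)^{\a-1} + \I^\a_{a+}[f(q,\cdot)](t)$ a.e., and by Example~\ref{ex1} the first term is exactly $\D^{1-\a}_{a+}[q_a](t)$. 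For the reverse implication I would apply $\I^{1-\a}_{a+}$ to the representation, using the semigroup property to turn $\I^{1-\a}_{a+}\I^\a_{a+}[f(q,\cdot)]$ into $\I^1_{a+}[f(q,\cdot)]$, while the power-function computation $\I^{1-\a}_{a+}[(\cdot-a)^{\a-1}] \equiv \Gamma(\a)$ turns the singular first term back into the constant $q_a$, thereby recovering the integrated identity.

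The main obstacle is not depth but the careful reversibility of this second equivalence: one must check that the semigroup identities of Proposition~\ref{prop5} genuinely apply (they do, since $q$ and $f(q,\cdot)$ are only assumed $\L^1_\loc$, which is precisely that proposition's hypothesis, the equalities holding a.e.), handle the singular kernel $(t-a)^{\a-1}$ near $t = a$ through the Beta-type formula $\I^{1-\a}_{a+}[(\cdot-a)^{\a-1}] \equiv \Gamma(\a)$, and treat the boundary case $\a_i = 1$ separately, where $1 - \a_i = 0$, the operator $\I^{0}_{a+}$ is the identity, and everything collapses to the classical equivalence $q = q_a + \int_a^\cdot f(q,\cdot)$. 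The remaining points, that $\I^{1-\a}_{a+}[q]$ may be identified with its $\AC_\loc$ representative and that one may differentiate a primitive to recover its $\L^1_\loc$ integrand a.e., are the standard Lebesgue-theoretic facts underlying the identification convention already in force.
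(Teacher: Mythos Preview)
Your proposal is correct and follows essentially the same route as the paper: both directions pass through the intermediate ``integrated identity'' $\I^{1-\a}_{a+}[q] = q_a + \I^1_{a+}[f(q,\cdot)]$, reaching the Volterra representation by applying $\Im$ and differentiating, and recovering the integrated identity from the representation by applying $\I^{1-\a}_{a+}$ together with the Beta-function computation $\I^{1-\a}_{a+}\big[\D^{1-\a}_{a+}[q_a]\big] = q_a$. Your explicit factoring into two separate equivalences and the remark on the degenerate case $\a_i = 1$ are organisational additions, but the mathematical content matches the paper's proof.
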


\noindent The next theorem provides an existence-uniqueness result for~\eqref{eqcauchyproblemRL}.

\begin{theorem}\label{thm1RL}
If $f$ satisfies \eqref{HL1zero} and \eqref{HgloblipRL}, then \eqref{eqcauchyproblemRL} has a unique (global) solution.
\end{theorem}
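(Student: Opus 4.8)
The plan is to reduce the differential problem to a fixed point problem via the integral representation of Proposition~\ref{prop1RL}, and then to apply the Banach fixed point theorem with a suitable weighted $\L^1$ norm. First I would observe that, since $f$ satisfies both \eqref{HL1zero} and \eqref{HgloblipRL}, the remark following \eqref{HgloblipRL} guarantees that $f$ also satisfies \eqref{HL1}, so that Proposition~\ref{prop1RL} applies. It tells us that $q : I_f \to \R^m$ is a (global) solution of \eqref{eqcauchyproblemRL} if and only if $q \in \L^1_\loc(I_f,\R^m)$ is a fixed point of the operator $\mathcal{F}$ defined by $\mathcal{F}(q)(t) := \D^{1-\a}_{a+}[q_a](t) + \Im[f(q,\cdot)](t)$. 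One checks that $\mathcal{F}$ maps $\L^1_\loc(I_f,\R^m)$ into itself: the first term is a fixed function whose $i$-th component behaves like $(t-a)^{\a_i-1}$ with $\a_i > 0$, hence is locally integrable, while \eqref{HL1} ensures $f(q,\cdot) \in \L^1_\loc(I_f,\R^m)$ and Proposition~\ref{prop1} then ensures $\Im[f(q,\cdot)] \in \L^1_\loc(I_f,\R^m)$.

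Since $I_f$ is not necessarily compact, I would first solve the fixed point problem on each compact subinterval $[a,b] \subset I_f$ and then glue the pieces. On a fixed $[a,b]$, the causal (Volterra) structure of $\mathcal{F}$ shows that it restricts to a well-defined self-map of $\L^1([a,b],\R^m)$, which I equip with the Bielecki-type weighted norm $\|q\|_\beta := \int_a^b e^{-\beta(t-a)} |q(t)|_m \, dt$; for each $\beta > 0$ this norm is equivalent to $\|\cdot\|_1$, so the space stays complete. The decisive estimate is the following consequence of the Tonelli--Fubini theorem: for any $h \in \L^1([a,b],\R)$ and any $\a_i > 0$, exchanging the order of integration and bounding the inner integral by $\int_0^{+\infty} e^{-\beta s} s^{\a_i-1} \, ds = \Gamma(\a_i) \beta^{-\a_i}$ gives $\int_a^b e^{-\beta(t-a)} \int_a^t (t-\t)^{\a_i-1} |h(\t)| \, d\t \, dt \leq \Gamma(\a_i) \beta^{-\a_i} \|h\|_\beta$.

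Combining this with the Lipschitz property \eqref{HgloblipRL} produces the contraction. Writing $g := f(q_2,\cdot) - f(q_1,\cdot)$, using $|g(\t)|_m \leq L |q_2(\t)-q_1(\t)|_m$, and decoupling the components through the bound $|\Im[g](t)|_m \leq \sum_{i=1}^m \frac{1}{\Gamma(\a_i)} \int_a^t (t-\t)^{\a_i-1} |g(\t)|_m \, d\t$, the estimate above yields $\|\mathcal{F}(q_2) - \mathcal{F}(q_1)\|_\beta \leq L \big( \sum_{i=1}^m \beta^{-\a_i} \big) \|q_2-q_1\|_\beta$. Because every $\a_i > 0$, the factor $L \sum_{i=1}^m \beta^{-\a_i}$ tends to $0$ as $\beta \to +\infty$, so for $\beta$ large enough $\mathcal{F}$ is a contraction and admits a unique fixed point $q_{[a,b]}$ on $[a,b]$.

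Finally I would pass from compact subintervals to $I_f$. Exhausting $I_f$ by an increasing sequence of compact subintervals $[a,b_n]$ with $b_n \uparrow \sup I_f$, the uniqueness of the fixed point on each $[a,b_n]$ (together with the Volterra structure) forces the restriction of $q_{[a,b_{n+1}]}$ to $[a,b_n]$ to coincide with $q_{[a,b_n]}$; these consistent pieces glue into a single $q \in \L^1_\loc(I_f,\R^m)$ which is a fixed point of $\mathcal{F}$ on all of $I_f$, hence a global solution by Proposition~\ref{prop1RL}. Uniqueness on $I_f$ follows since any global solution restricts to the unique fixed point on each $[a,b_n]$. The point requiring the most care is the interplay between the multi-order (one $\a_i$ per component) and the mixing of components in the Euclidean norm through \eqref{HgloblipRL}; the componentwise bound is precisely what decouples them, letting each order contribute a separate $\beta^{-\a_i}$ factor, all of which vanish as $\beta \to +\infty$ exactly because $\a \in (0,1]^m$.
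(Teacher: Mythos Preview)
Your proposal is correct and follows essentially the same route as the paper: reduce to the integral representation of Proposition~\ref{prop1RL}, establish a contraction on $\L^1([a,b],\R^m)$ equipped with the Bielecki norm $\int_a^b e^{-\beta(t-a)}|q(t)|_m\,dt$ via the Fubini estimate $\int_a^t(t-\t)^{\a_i-1}e^{-\beta(t-\t)}\,dt\leq\Gamma(\a_i)\beta^{-\a_i}$, and then glue the unique fixed points along an exhaustion $[a,b_n]\uparrow I_f$. The paper's argument is identical up to notation (it writes $k\in\N^*$ for your $\beta$ and packages the gluing step through Lemma~\ref{lem1}), so there is nothing to add.
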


\noindent Similar results were already obtained in the literature. We refer to Introduction for details and references. Note that the proof of Theorem~\ref{thm1RL}, detailed in Appendix~\ref{appsectionCauchyRL}, is based on the introduction of an appropriate Bielecki norm. This method is widely inspired from \cite{idcz}.

\subsection{Existence-uniqueness results for \eqref{eqcauchyproblemC}}\label{sectionCauchyC}

In the whole section we consider that $\Omega$ is a nonempty open subset of $\R^m$. In the sequel $\KK$ stands for the set of compact subsets of $\Omega$. All results of this section are detailed in Appendix~\ref{appsectionCauchyC}.

\subsubsection{Properties of the dynamic $f$}
As in the classical Cauchy-Lipschitz theory, the existence and uniqueness of a solution of~\eqref{eqcauchyproblemC} require some assumptions on the dynamic $f$, whence the following series of definitions. 

\begin{definition}
The dynamic $f$ is said to be \emph{bounded on compacts} if, for any $K \in \KK$ and for any $[c,d] \subset I_f$, there exists $M \geq 0$ such that
\begin{equation}\label{Hinfty}\tag{Hyp${}_\infty$}
\vert f(x,t) \vert_m \leq M,
\end{equation}
for any $x\in K$ and for almost every $t \in [c,d]$. In what follows this property will be referred to as~\eqref{Hinfty}.
\end{definition}

\begin{definition}
The dynamic $f$ is said to be \emph{locally Lipschitz continuous in its first variable} if, for every $(x,t) \in \Omega \times I_f$, there exist $R > 0$, $\delta > 0$ and $L \geq 0$ such that $\BB_m(x,R) \subset \Omega$ and
\begin{equation}\label{Hloclip}\tag{Hyp${}_{\mathrm{loc}}$}
\vert f(x_2,\t) - f(x_1,\t) \vert_m \leq L \vert x_2 - x_1 \vert_m,
\end{equation}
for any $ x_1$, $x_2 \in \BB_m(x,R)$ and for almost every $\t \in [t-\delta,t+\delta] \cap I_f$. In what follows this property will be referred to as~\eqref{Hloclip}.
\end{definition}

\begin{definition}
The dynamic $f$ is said to be \emph{globally Lipschitz continuous in its first variable} if for any $[c,d] \subset I_f$, there exists $L \geq 0$ such that
\begin{equation}\label{HgloblipC}\tag{Hyp$_{\mathrm{glob}}$}
\vert f(x_2,t) - f(x_1,t) \vert_m \leq L \vert x_2 - x_1 \vert_m,
\end{equation}
for any $x_1$, $x_2\in\Omega$ and for almost every $t \in [c,d]$. In what follows this property will be referred to as~\eqref{HgloblipC}.
\end{definition}

\noindent Note that if $f$ satisfies~\eqref{HgloblipC}, then $f$ satisfies~\eqref{Hloclip}. 

\noindent Note that if $f$ satisfies~\eqref{Hinfty}, then $f$ satisfies~\eqref{HL1zero}.

\subsubsection{Definition of a maximal solution and main results}

We introduce
$$ \IImf := \{ I \in \IIm \text{ such that } I \subset I_f \}. $$
Now we introduce a notion of local solution of~\eqref{eqcauchyproblemC}.

\begin{definition}
A couple $(q,I)$ is said to be a \textit{local solution} of~\eqref{eqcauchyproblemC} if and only if
\begin{itemize}
\item $I \in \IImf$;
\item $q \in {}_\c \AC^\a_{a+}(I,\Omega)$ (in particular $q$ is with values in $\Omega$);
\item $q(a) = q_a$;
\item $\CDm [q](t) = f(q(t),t)$ for almost every $t \in I$.
\end{itemize}
\end{definition}

\begin{definition}
Let $(q,I)$ be a local solution of~\eqref{eqcauchyproblemC}. We say that $(q',I')$ is an \textit{extension} of $(q,I)$ if $(q',I')$ is a local solution of~\eqref{eqcauchyproblemC} and if $I \subset I'$ and $q' = q$ on $I$.
\end{definition}

\begin{definition}
Let $(q,I)$ be a local solution of~\eqref{eqcauchyproblemC}. We say that $(q,I)$ is a \textit{maximal solution} of~\eqref{eqcauchyproblemC} if $I' = I$ for any extension $(q',I')$ of $(q,I)$. 
\end{definition}

\begin{definition}
Let $(q,I)$ be a local solution of~\eqref{eqcauchyproblemC}. We say that $(q,I)$ is a \textit{global solution} of~\eqref{eqcauchyproblemC} if $I = I_f$. 
\end{definition}

\noindent Note that a global solution of~\eqref{eqcauchyproblemC} is necessarily maximal. The following proposition gives an integral representation for local solutions of~\eqref{eqcauchyproblemC}.

\begin{proposition}[Integral representation]\label{prop1C}
If $f$ satisfies~\eqref{Hinfty}, a couple $(q,I)$ is a \textit{local solution} of~\eqref{eqcauchyproblemC} if and only if $I \in \IImf$, $q \in \C(I,\Omega)$ and
\begin{eqnarray*}
q(t) & = & q_a + \Im [ f(q,\cdot) ](t) , \\[4pt]
& = & q_a + \int_a^t \left[ \dfrac{1}{\Gamma(\a)} (t-\t)^{\a-1} \right] \otimes f(q(\t),\t) \; d\t,
\end{eqnarray*}
for every $t \in I$.
\end{proposition}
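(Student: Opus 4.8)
The plan is to prove both implications of the equivalence, working coordinate by coordinate. Writing $q = (q_1,\ldots,q_m)$, $f = (f_1,\ldots,f_m)$ and $\a = (\a_1,\ldots,\a_m) \in (0,1]^m$, the $i$-th component of $\Im[f(q,\cdot)]$ is exactly $\I^{\a_i}_{a+}[f_i(q,\cdot)]$, so every assertion below is understood through the scalar results of Section~\ref{secbasics1} applied to each index. A preliminary observation used throughout: if $q \in \C(I,\Omega)$, then for every compact $J \subset I$ the image $q(J)$ is a compact subset of $\Omega$, so \eqref{Hinfty} yields $f(q,\cdot) \in \L^\infty_\loc(I,\R^m) \subset \L^1_\loc(I,\R^m)$; by Proposition~\ref{prop3} (applied with order $\a_i > 0$ on each component) the function $\Im[f(q,\cdot)]$ is then continuous on $I$ and vanishes at $a$. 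In particular $\Im[f(q,\cdot)](a) = 0$, and the integral identity may be read everywhere on $I$ rather than only almost everywhere.

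For the direct implication, assume $(q,I)$ is a local solution and set $g := q - q(a)$. By definition of the Caputo derivative $g \in \AC^\a_{a+}(I,\R^m)$, which means $h := \I^{1-\a}_{a+}[g] \in \AC_\loc(I,\R^m)$ with $\dot h = \Dm[g] = \CDm[q] = f(q,\cdot)$ almost everywhere. The ordinary fundamental theorem of calculus for $h \in \AC_\loc$ gives $h(t) = h(a) + \I^1_{a+}[f(q,\cdot)](t)$. The boundary term vanishes: since $g$ is continuous with $g(a)=0$, Proposition~\ref{prop3} (for $\a_i < 1$) or the identity $\I^0_{a+} = \ID$ (for $\a_i = 1$) gives $h(a) = \I^{1-\a}_{a+}[g](a) = 0$ componentwise. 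Hence $\I^{1-\a}_{a+}[g] = \I^1_{a+}[f(q,\cdot)]$. I would then apply $\Im$ to both sides and invoke the semigroup property (Proposition~\ref{prop5}) twice to obtain $\I^1_{a+}[g] = \I^1_{a+}\big[\Im[f(q,\cdot)]\big]$, i.e. $\int_a^t g(\t)\,d\t = \int_a^t \Im[f(q,\cdot)](\t)\,d\t$ for all $t \in I$; since both integrands are continuous, differentiating yields $g = \Im[f(q,\cdot)]$ on $I$, which is exactly the asserted integral representation.

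For the converse, assume $I \in \IImf$, $q \in \C(I,\Omega)$ and $q = q_a + \Im[f(q,\cdot)]$ on $I$. Evaluating at $t = a$ and using $\Im[f(q,\cdot)](a) = 0$ gives $q(a) = q_a$. Then $q - q(a) = \Im[f(q,\cdot)]$, so by Proposition~\ref{prop5} one has $\I^{1-\a}_{a+}[q - q(a)] = \I^1_{a+}[f(q,\cdot)] = \int_a^\cdot f(q(\t),\t)\,d\t$, which is absolutely continuous as a primitive of an $\L^1_\loc$ function. Hence $q - q(a) \in \AC^\a_{a+}(I,\R^m)$, that is $q \in {}_\c\AC^\a_{a+}(I,\Omega)$, and differentiating gives $\CDm[q] = \Dm[q - q(a)] = \frac{d}{dt}\I^1_{a+}[f(q,\cdot)] = f(q,\cdot)$ almost everywhere, so $(q,I)$ is a local solution.

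The routine parts are the componentwise reductions and the bookkeeping applications of Propositions~\ref{prop3} and~\ref{prop5}; the one step deserving care is the inversion of the fractional integral in the direct implication. Rather than invoking injectivity of $\I^{1-\a}_{a+}$ on $\L^1_\loc$, I prefer to compose with $\Im$, collapse $\I^{1-\a}_{a+}\Im$ to $\I^1_{a+}$ via Proposition~\ref{prop5}, and then differentiate two continuous primitives; this keeps the argument self-contained within the results already recorded. The remaining subtlety is the separate treatment of the endpoint order $\a_i = 1$, where $1 - \a_i = 0$ and Proposition~\ref{prop3} no longer applies, but there the vanishing of the boundary term is immediate from $\I^0_{a+} = \ID$ and $g(a) = 0$.
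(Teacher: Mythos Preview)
Your proof is correct and follows essentially the same route as the paper's: in both directions you use~\eqref{Hinfty} to get $f(q,\cdot)\in\L^\infty_\loc$, apply Proposition~\ref{prop3} to control the boundary term $\I^{1-\a}_{a+}[q-q(a)](a)$, compose with the complementary fractional integral via Proposition~\ref{prop5} to reach an equality of $\I^1_{a+}$-primitives, and then differentiate continuous integrands. The only differences from the paper are cosmetic---your explicit componentwise framing and the separate handling of the endpoint case $\a_i=1$---and do not constitute a distinct argument.
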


\noindent The next theorems provide existence-uniqueness results for~\eqref{eqcauchyproblemC}.

\begin{theorem}\label{thm1}
If $f$ satisfies \eqref{Hinfty} and \eqref{Hloclip}, then~\eqref{eqcauchyproblemC} has a unique maximal solution $(q,I)$. Moreover $(q,I)$ is the maximal extension of any other local solution of~\eqref{eqcauchyproblemC}.
\end{theorem}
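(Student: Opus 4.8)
The plan is to follow the classical Cauchy--Lipschitz strategy, adapted to the nonlocal fractional setting, in three stages: a local existence--uniqueness result near $a$ obtained by a fixed-point argument on the integral form of Proposition~\ref{prop1C}, a global uniqueness (overlap) property showing that any two local solutions agree on the intersection of their domains, and finally an abstract union construction that produces the maximal solution and yields the extension claim for free. Throughout, the integral representation of Proposition~\ref{prop1C} replaces the differential formulation, since \eqref{Hinfty} implies \eqref{HL1zero} and makes $f(q,\cdot)$ locally integrable.

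First I would establish local existence and uniqueness. As $\Omega$ is open and $q_a \in \Omega$, I pick $R > 0$ with $\BB_m(q_a,R) \subset \Omega$; by \eqref{Hinfty} the dynamic is bounded, say by $M$, on $\BB_m(q_a,R) \times [a,a+\delta]$, and by \eqref{Hloclip} it is $L$-Lipschitz in its first variable there. I then consider the operator $\mathcal{F}[q](t) := q_a + \Im[f(q,\cdot)](t)$ on the complete metric space of continuous functions $[a,a+h] \to \BB_m(q_a,R)$. The self-mapping property follows, componentwise in the multi-order, from the estimate $|\Im[f(q,\cdot)](t)|_m \lesssim M\,(t-a)^\a/\Gamma(\a+1)$, a direct consequence of Proposition~\ref{prop4}, which also guarantees that $\Im[f(q,\cdot)]$ is continuous and vanishes at $a$; one then shrinks $h$ so this stays below $R$. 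The contraction is obtained by equipping the space with a Bielecki-type weighted norm exactly as in the proof of Theorem~\ref{thm1RL}, so that the Banach fixed-point theorem yields a unique local solution on $[a,a+h]$.

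Second I would prove that two local solutions $(q_1,I_1)$ and $(q_2,I_2)$ coincide on $I_1 \cap I_2$. On any compact subinterval $[a,c]$ of the intersection both solutions are continuous with values in $\Omega$, hence take values in a common compact set $K \in \KK$, on which \eqref{Hloclip} furnishes a single Lipschitz constant $L_K$. Both restrictions satisfy the integral equation, so they are fixed points of the operator $\mathcal{F}$ built with $L_K$; the same Bielecki-norm contraction estimate then forces $q_1 = q_2$ on $[a,c]$, and hence on all of $I_1 \cap I_2$ (a fractional Gronwall inequality would serve equally well). This overlap property is the technical heart of the argument: unlike the classical case, the nonlocality of $\CDm$ forbids restarting the equation at an interior point, so uniqueness must be propagated through the singular memory kernel $(t-\t)^{\a-1}$ rather than locally at each time.

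Finally, the maximal solution is built by gluing. I set $I^\ast := \bigcup I$ over all local solutions $(q,I)$ and define $q^\ast$ on $I^\ast$ by $q^\ast = q$ on each such $I$; this is well-defined by the overlap property, and $I^\ast \in \IImf$ since it is an interval with left endpoint $a$, contained in $I_f$, strictly larger than $\{a\}$. For any $t \in I^\ast$ there is a local solution agreeing with $q^\ast$ on $[a,t]$, so $q^\ast$ inherits the required regularity and satisfies the integral identity at $t$; by Proposition~\ref{prop1C}, $(q^\ast,I^\ast)$ is itself a local solution. It is maximal, because any extension has domain contained in $I^\ast$ by construction, and it extends every local solution by the overlap property, which simultaneously gives uniqueness of the maximal solution. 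The main obstacle is the second stage: securing the Bielecki contraction (or the fractional Gronwall estimate) with the $K$-dependent constant $L_K$, and controlling the singular kernel near the endpoints so that both the local fixed point and the propagation of uniqueness are well posed.
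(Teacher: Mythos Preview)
Your three-stage plan is correct and yields the theorem, but it diverges from the paper's proof in two of the three stages, and one step you treat as immediate deserves explicit justification.

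For the overlap property (your Stage~2), the paper does \emph{not} pass to a single Lipschitz constant on a compact range. Instead it argues by contradiction: letting $b$ be the infimum of the disagreement set, it builds an auxiliary function
\[
z(t) = q_a + \int_a^b \left[\dfrac{1}{\Gamma(\alpha)}(t-\tau)^{\alpha-1}\right]\otimes f(q(\tau),\tau)\,d\tau
\]
on $[b,b+\varepsilon]$, which absorbs the memory from $[a,b]$, and then runs a local sup-norm contraction on $\C([b,b+\varepsilon],\BB_m(q(b),R))$ exactly as in the local-existence step. Your route is more global: you use that $q_1,q_2$ range in a common compact $K$ on $[a,c]$, upgrade \eqref{Hloclip} to a single constant $L_K$, and apply a Bielecki contraction (or fractional Gronwall) on all of $[a,c]$ at once. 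This is legitimate, but the upgrade ``\eqref{Hloclip} furnishes a single Lipschitz constant $L_K$'' is not free from the definition as stated: you need a finite-cover argument on $K\times[a,c]$ together with \eqref{Hinfty} (to handle pairs $x_1,x_2\in K$ that are far apart via $|f(x_2,\tau)-f(x_1,\tau)|\le 2M \le (2M/R_0)|x_2-x_1|$). Once that lemma is recorded, your contraction inequality $\|q_1-q_2\|_{\infty,k}\le \ell\,\|q_1-q_2\|_{\infty,k}$ with $\ell<1$ goes through, since you only need it at the two fixed points $q_1,q_2$, not that $F$ is a self-map of $\C([a,c],K)$.

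For maximality (your Stage~3), the paper invokes Zorn's lemma to extend any local solution to a maximal one, whereas you take the explicit union $I^\ast=\bigcup I$ over all local solutions and glue via the overlap property. Both work; your construction is more direct and delivers the ``maximal extension of any local solution'' claim immediately. The paper's local-contraction machinery with the function $z$ is not wasted, however: it is exactly what is reused in the proof of Theorem~\ref{thm2} to show that a maximal solution on $[a,b]$ with $b<\sup I_f$ can be continued, so if you follow your route here you will still need that construction later.
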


\begin{theorem}\label{thm3}
If $\; \Omega = \R^m$ and if $f$ satisfies \eqref{Hinfty} and \eqref{HgloblipC}, then the maximal solution $(q,I)$ of~\eqref{eqcauchyproblemC} is global, that is, $I = I_f$.
\end{theorem}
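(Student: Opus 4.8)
The plan is to exploit that, by Theorem~\ref{thm1}, \eqref{eqcauchyproblemC} admits a unique maximal solution $(q,I)$ and that a global solution is automatically maximal. It therefore suffices to \emph{construct} a solution defined on the whole of $I_f$: by uniqueness this solution must coincide with the maximal one, forcing $I = I_f$. The essential feature distinguishing the fractional case from the classical one is that the Caputo derivative $\CDm$ is anchored at $a$ and carries memory of the entire past, so one cannot continue a solution by restarting a fresh Cauchy problem at the right endpoint of $I$. I would instead build the solution on each compact subinterval $[a,b] \subset I_f$ \emph{in one stroke}, and it is precisely the global Lipschitz assumption \eqref{HgloblipC} over $\Omega = \R^m$ that prevents the finite-time blow-up which could otherwise obstruct this construction. (Alternatively, a fractional Gronwall a priori bound combined with the qualitative blow-up alternative would also work, but the construction below is self-contained.)

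Fix $[a,b] \subset I_f$ and consider the operator $\mathcal{F}$ on $\C([a,b],\R^m)$ defined, in accordance with Proposition~\ref{prop1C}, by $\mathcal{F}[q] := q_a + \Im[f(q,\cdot)]$. First I would check that $\mathcal{F}$ maps $\C([a,b],\R^m)$ into itself: for continuous $q$, the bound $\vert f(q(t),t) \vert_m \leq \vert f(0,t) \vert_m + L \vert q(t) \vert_m$ coming from \eqref{HgloblipC}, together with \eqref{Hinfty} applied to the compact $\{0\}$ to control $f(0,\cdot)$, shows that $f(q,\cdot) \in \L^\infty([a,b],\R^m)$, whence $\Im[f(q,\cdot)] \in \C_a([a,b],\R^m)$ by Proposition~\ref{prop4} applied componentwise, so that $\mathcal{F}[q] \in \C([a,b],\R^m)$ with $\mathcal{F}[q](a) = q_a$. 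Then I would endow $\C([a,b],\R^m)$ with the Bielecki norm $\Vert q \Vert_\beta := \sup_{t \in [a,b]} e^{-\beta(t-a)} \vert q(t) \vert_m$, which is equivalent to $\Vert \cdot \Vert_\infty$ and hence complete. Using \eqref{HgloblipC} componentwise together with the key estimate
\begin{equation*}
\int_a^t (t-\t)^{\a_i - 1} e^{\beta(\t - a)} \, d\t \leq \Gamma(\a_i) \, \beta^{-\a_i} \, e^{\beta(t-a)},
\end{equation*}
obtained by the substitution $u = t - \t$ and extending the integral to $[0,+\infty)$, one gets $\Vert \mathcal{F}[q] - \mathcal{F}[\tilde q] \Vert_\beta \leq L \big( \sum_{i} \beta^{-2\a_i} \big)^{1/2} \Vert q - \tilde q \Vert_\beta$. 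Since every $\a_i > 0$, this factor tends to $0$ as $\beta \to +\infty$, so for $\beta$ large enough $\mathcal{F}$ is a contraction. The Banach fixed point theorem then yields a unique $q^{[a,b]} \in \C([a,b],\R^m)$ with $\mathcal{F}[q^{[a,b]}] = q^{[a,b]}$, which by Proposition~\ref{prop1C} is a local solution of \eqref{eqcauchyproblemC} on $[a,b]$.

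Finally I would glue these pieces. For $a < b_1 < b_2$ with $[a,b_2] \subset I_f$, the restriction of $q^{[a,b_2]}$ to $[a,b_1]$ is again a local solution there, hence equals $q^{[a,b_1]}$ by the uniqueness just established (equivalently, by Theorem~\ref{thm1}); thus the family $(q^{[a,b]})$ is coherent and defines a single function $q$ on $\bigcup_{[a,b] \subset I_f} [a,b] = I_f$. Being continuous and satisfying the integral representation on every compact subinterval, $(q, I_f)$ is a local solution by Proposition~\ref{prop1C}, that is, a global solution; a global solution is maximal, so by the uniqueness of the maximal solution in Theorem~\ref{thm1} the maximal solution indeed satisfies $I = I_f$. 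I expect the main obstacle to be the contraction step: one must verify that the global Lipschitz constant, once filtered through the fractional kernels $(t-\t)^{\a_i - 1}$ and weighted by the Bielecki exponential, produces a factor strictly less than $1$ \emph{uniformly on all of} $\C([a,b],\R^m)$ rather than merely on a ball. This uniformity, enabled by \eqref{HgloblipC}, is exactly what rules out blow-up and is the crux of the whole argument.
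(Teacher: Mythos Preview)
Your proposal is correct and follows essentially the same route as the paper: fix $[a,b]\subset I_f$, equip $\C([a,b],\R^m)$ with a Bielecki-type weighted sup norm, show that $q\mapsto q_a+\Im[f(q,\cdot)]$ is a contraction (your constant $L\big(\sum_i\beta^{-2\a_i}\big)^{1/2}$ versus the paper's $L\sum_i k^{-\a_i}$, both tending to $0$), and conclude $I=I_f$ via Theorem~\ref{thm1}. The only cosmetic difference is that the paper invokes directly the ``maximal extension of any local solution'' clause of Theorem~\ref{thm1} rather than spelling out the gluing, and bounds $f(q,\cdot)$ via \eqref{Hinfty} on the compact range $q([a,b])$ instead of via the Lipschitz splitting through $f(0,\cdot)$.
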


\noindent Similar results were already obtained in the literature. We refer to Introduction for details and references. 

\begin{remark}
If $\Omega = \R^m$, if $f$ satisfies \eqref{Hinfty} and \eqref{HgloblipC} and if $q_a = 0$, then the unique maximal solution (that is moreover global) of~\eqref{eqcauchyproblemC} coincides with the unique global solution of~\eqref{eqcauchyproblemRL}. In particular, in that case, the unique global solution of~\eqref{eqcauchyproblemRL} belongs to $\C_a (I_f,\R^m)$.
\end{remark}

\noindent As far as we know, the following last result was not addressed in the literature yet. It provides informations on the behavior of a maximal solution. Precisely, it states that a maximal solution that is not global must go out of any compact of $\Omega$.

\begin{theorem}\label{thm2}
If $f$ satisfies \eqref{Hinfty} and \eqref{Hloclip} and if $(q,I)$ is the maximal solution of~\eqref{eqcauchyproblemC}, then:
\begin{itemize}
\item either $I = I_f$, that is, $(q,I)$ is global;
\item either $I = [a,b)$ with $b \in I_f$, $b > a$, and moreover, for every $K \in \KK$, there exists $t\in I$ such that $q(t) \notin K$.
\end{itemize}
\end{theorem}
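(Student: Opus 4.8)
The plan is to prove Theorem~\ref{thm2} by contradiction, following the classical escape-lemma strategy adapted to the fractional Caputo setting.

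Let me sketch how I would prove this.
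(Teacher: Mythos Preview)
Your proposal is not a proof; it is a one-sentence announcement of intent followed by ``Let me sketch how I would prove this'' and then nothing. There is no argument to evaluate. Saying ``contradiction, classical escape-lemma strategy'' is a label, not a proof, and in the fractional Caputo setting the adaptation is exactly where the work lies.

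To indicate what is actually needed, the paper's argument proceeds in two stages. First, one must show that if $(q,I)$ is maximal and not global then $I$ is necessarily of the form $[a,b)$ with $b\in I_f$, and moreover $q$ cannot be continuously extended at $t=b$ with a value in $\Omega$. The first point is obtained by a Banach fixed-point extension argument (if $I=[a,b]$ with $b<\sup I_f$, build a contraction on $\C([b,b+\varepsilon],\BB_m(q(b),R))$ using the integral representation and \eqref{Hinfty}, \eqref{Hloclip}, to extend $q$ past $b$, contradicting maximality). The second point is similar: if $q$ extended continuously to $\xi\in\Omega$ at $b$, then $(q',[a,b])$ would be a local solution strictly extending $(q,[a,b))$.

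Second, and this is the genuinely fractional step, one assumes for contradiction that $q([a,b))\subset K$ for some $K\in\KK$. Then \eqref{Hinfty} gives $f(q,\cdot)\in\L^\infty([a,b),\R^m)$, and the key is that Proposition~\ref{prop4} (the H\"older regularity of the fractional integral $\Im$ on $\L^\infty$) forces $q\in\H^\a([a,b),\R^m)$, hence $q$ is uniformly continuous and extends continuously to $t=b$ with a limit $\xi\in K\subset\Omega$. This contradicts the first stage.

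Your submission contains none of these ingredients: no treatment of why $I$ is half-open, no extension argument, and most importantly no mention of how you pass from ``$q$ stays in a compact'' to ``$q$ extends continuously at $b$'' --- which in the fractional case requires the H\"older estimate on $\Im$ and is not the same as the classical ODE argument.
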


\section{Fractional state-transition matrices}\label{sectransition}

In Section~\ref{sectransition1} we focus on homogeneous linear square matrix Cauchy problems and we define fractional state-transition matrices. Our aim is to provide in Section~\ref{secduhamel} fractional versions of the classical Duhamel formula. Finally, Sections~\ref{sectransition3} and \ref{sectransition4} are devoted to duality theorems relying left and right state-transition matrices. All proofs of Section~\ref{sectransition} are detailed in Appendix~\ref{appsectransition}.

\subsection{Definitions}\label{sectransition1}
In the whole section we fix $a \in \R$, $I \in \IIm$ and $m \in \N^*$. Let us consider a square matrix function $A =(A_{ij} ) \in \L^\infty_\loc(I,\R^{m \times m})$ and a square matrix fractional multi-order $\a = (\a_{ij}) \in (0,1]^{m \times m}$. For every $s \in I$, $s < \sup I$, we denote by $I^s := I \cap [s,+\infty)$. Note that $I^s \in \mathbb{I}_{s+}$. The following Proposition-Definitions clearly follow from Propositions~\ref{prop1RL} and \ref{prop1C} and from Theorems~\ref{thm1RL}, \ref{thm1} and \ref{thm3}.

\begin{definition}\label{defRLstate}
For every $s \in I$, $s < \sup I$, the homogeneous linear square matrix Cauchy problem given by
\begin{equation}\label{eqcauchyproblemRLLM}\tag{LMCP}
\left\lbrace \begin{array}{l}
\D^\a_{s+} [Z](t) = A(t) \times Z(t),  \\
\I^{1-\a}_{s+} [Z](s) = \ID_m, 
\end{array}
\right.
\end{equation}
admits, in virtue of Theorem~\ref{thm1RL}, a unique (global) solution denoted by $Z(\cdot,s) \in \AC^{\a}_{a+}(I^s,\R^{m \times m})$.
The function $Z(\cdot,\cdot)$ is called the \textit{left R-L state-transition matrix} associated to $A$ and $\a$. It follows from Proposition~\ref{prop1RL} that
$$ Z(t,s) = \left[ \dfrac{1}{\Gamma(\a)} (t-s)^{\a-1} \right] \otimes \ID_m + \int_s^t \left[ \dfrac{1}{\Gamma(\a)} (t-\t)^{\a-1} \right] \otimes \Big[ A(\t) \times Z(\t,s) \Big] \; d\t, $$
for almost every $t$, $s \in I$ with $t > s$. 
\end{definition}

\begin{definition}\label{defCstate}
For every $s \in I$, $s < \sup I$, the homogeneous linear square matrix Cauchy problem given by
\begin{equation}\label{eqcauchyproblemCLM}\tag{${}_\c$LMCP}
\left\lbrace \begin{array}{l}
\CD^\a_{s+} [Z](t) = A(t) \times Z(t),  \\
Z(s) = \ID_m, 
\end{array}
\right.
\end{equation}
admits, in virtue of Theorems~\ref{thm1} and \ref{thm3}, a unique maximal solution, that is moreover global, denoted by ${}_\c Z(\cdot,s) \in {}_\c \AC^{\a}_{a+}(I^s,\R^{m \times m})$. The function ${}_\c Z(\cdot,\cdot)$ is called the \textit{left Caputo state-transition matrix} associated to $A$ and $\a$. It follows from Proposition~\ref{prop1C} that
$$ {}_\c Z(t,s) = \ID_m + \int_s^t \left[ \dfrac{1}{\Gamma(\a)} (t-\t)^{\a-1} \right] \otimes \Big[ A(\t) \times {}_\c Z(\t,s) \Big] \; d\t, $$
for every $t$, $s \in I$ with $t \geq s$. 
\end{definition}

\begin{example}
As recalled and referenced in Introduction, if $A(\cdot) = A $ is constant and if $\a$ is row and column constant, then
$$ Z(t,s) = (t-s)^{\a-1} E_{\a,\a}  (A(t-s)^\a)  \quad \text{and} \quad {}_\c Z(t,s) = E_{\a,1} (A(t-s)^\a) , $$
where $E_{\a,\beta}$ denotes the classical Mittag-Leffler function. We refer to \cite{das,idcz} for more details.
\end{example}

\noindent We are now in a position to state fractional versions of the classical Duhamel formula in the next section. Before coming to that point, we first need to state the following technical but useful lemma.

\begin{lemma}\label{lemimportant}
Let $b \in I$ with $b > a$. There exists $\Theta^b \geq 0$ such that
$$ \vert Z_{ij} (t,s) \vert \leq (t-s)^{\a_{ij}-1} \Theta^b , $$
for almost every $a \leq s < t \leq b$ and for every $i$, $j \in \{ 1,\ldots,m \}$. In particular, $Z(t,\cdot) \in \L^1([a,t],\R^{m \times m})$ for almost every $t \in I$, $t>a$.
\end{lemma}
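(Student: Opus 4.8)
The plan is to obtain the pointwise bound on the entries $Z_{ij}(t,s)$ directly from the integral representation of the left R-L state-transition matrix given in Definition~\ref{defRLstate}, by a Gr\"onwall-type iteration. The key is that the singular kernel $(t-\t)^{\a_{ij}-1}$ is integrable (since $0 < \a_{ij} \leq 1$), and that $A(\cdot) \in \L^\infty_\loc(I,\R^{m \times m})$ is essentially bounded on the compact $[a,b]$. First I would fix $b \in I$ with $b>a$ and set $M := \Vert A \Vert_{\L^\infty([a,b])}$ (an entrywise or operator bound on $[a,b]$). Writing out the $(i,j)$-entry of the defining integral equation, one has
\begin{equation*}
Z_{ij}(t,s) = \dfrac{1}{\Gamma(\a_{ij})}(t-s)^{\a_{ij}-1} \delta_{ij} + \int_s^t \dfrac{1}{\Gamma(\a_{ij})}(t-\t)^{\a_{ij}-1} \Big[ A(\t) \times Z(\t,s) \Big]_{ij} \; d\t,
\end{equation*}
where $\delta_{ij}$ is the Kronecker symbol coming from $\ID_m$. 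The goal is to extract a factor $(t-s)^{\a_{ij}-1}$ uniformly.

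The main technical device will be a fractional Gr\"onwall inequality adapted to the multi-order setting. I expect the cleanest route is to first control a scalar majorant: set $\phi(t,s) := \max_{i,j} \sup \big\{ (t-s)^{1-\a_{ij}} \vert Z_{ij}(t,s)\vert \big\}$ or, more simply, bound $\vert Z_{ij}(t,s)\vert$ by iterating the integral inequality and recognising the resulting series as a Mittag-Leffler-type expression. Concretely, substituting the trivial first estimate back into the integral and using the Beta-function identity
\begin{equation*}
\int_s^t (t-\t)^{\a-1}(\t-s)^{\beta-1}\,d\t = (t-s)^{\a+\beta-1}\,\mathrm{B}(\a,\beta),
\end{equation*}
the singular factor $(t-s)^{\a_{ij}-1}$ is preserved through each iteration, and the successive coefficients are dominated by $(M'(t-s)^{\aaa})^k/\Gamma(k\aaa + \a_{ij})$-type terms, where $\aaa := \min_{ij}\a_{ij}$. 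Summing over $k$ yields a convergent bound on $[a,b]$, producing a finite constant $\Theta^b$ depending only on $m$, $M$, the orders $\a_{ij}$, and $b-a$.

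The main obstacle will be the bookkeeping forced by the multi-order structure: each matrix product $[A(\t)\times Z(\t,s)]_{ij} = \sum_k A_{ik}(\t) Z_{kj}(\t,s)$ mixes rows with different exponents $\a_{kj}$, so the exponent propagated in the convolution is $\a_{ij}$ on the kernel side but $\a_{kj}$ on the factor coming from $Z_{kj}$. I would handle this by passing to the uniform minimal order $\aaa = \min_{ij} \a_{ij}$ in the exponent estimates while keeping the correct leading singularity $\a_{ij}-1$ on the outermost kernel, which is exactly what the Beta-integral computation allows. Once the entrywise bound $\vert Z_{ij}(t,s)\vert \leq (t-s)^{\a_{ij}-1}\Theta^b$ is established for almost every $a \leq s < t \leq b$, the final assertion $Z(t,\cdot) \in \L^1([a,t],\R^{m\times m})$ for almost every $t$ follows immediately, since $\int_a^t (t-s)^{\a_{ij}-1}\,ds = (t-a)^{\a_{ij}}/\a_{ij} < \infty$ for each $\a_{ij} > 0$; this integrability in the second variable is precisely what will be needed to make sense of the Duhamel integrals and the duality theorems in the sequel.
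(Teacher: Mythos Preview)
Your proposal is correct and follows essentially the same route as the paper's proof: iterate the componentwise integral inequality coming from Definition~\ref{defRLstate}, use the Beta-function convolution identity (equivalently the semigroup property of fractional integrals) to preserve the leading singularity $(t-s)^{\a_{ij}-1}$ while accumulating higher-order terms, and sum the resulting Mittag-Leffler-type series; the multi-order mixing is handled exactly as you describe, by bounding the Gamma arguments from below via the minimal order. The only refinement the paper adds is a small case distinction ($b-a<1$ versus $b-a\geq 1$) to bound $(t-s)^{\sum_q \a_{k_q j}}$ uniformly by $(b-a)^{p\delta}$ with the correct choice of $\delta \in \{\min_{ij}\a_{ij},\max_{ij}\a_{ij}\}$, which you will want to include when writing out the details.
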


\subsection{Fractional Duhamel formulas}\label{secduhamel}
In this section we fix $a \in \R$, $I \in \IIm$ and $m \in \N^*$. Let $q_a \in \R^m$ and let $\a = (\a_{i}) \in (0,1]^{m}$ be a vector fractional multi-order. Let us consider a square matrix function $A = (A_{ij}) \in \L^\infty_\loc(I,\R^{m \times m})$ and a vector function $B = (B_i) \in \L^\infty_\loc(I,\R^m)$. 

\smallskip

Let $Z(\cdot,\cdot)$ be the left R-L state-transition matrix associated to $A$ and $\aa \in (0,1]^{m \times m}$. Let ${}_\c Z(\cdot,\cdot)$ be the left Caputo state-transition matrix associated to $A$ and $\aa \in (0,1]^{m \times m}$. The main results of this paper are stated as follows.

\begin{theorem}[Duhamel formula]\label{thmduhamelRL}
The non-homogeneous linear vector Cauchy problem given by
\begin{equation}\label{eqcauchyproblemRLLV}\tag{LVCP}
\left\lbrace \begin{array}{l}
\Dm [q](t) = A(t) \times q(t) + B(t),  \\
\I^{1-\a}_{a+}[q](a) = q_a, 
\end{array}
\right.
\end{equation}
admits a unique (global) solution denoted by $q$ and it is given by the fractional Duhamel formula
$$ q(t) = Z(t,a) \times q_a + \int_a^t Z(t,s) \times B(s) \; ds , $$
for almost every $t \in I$.
\end{theorem}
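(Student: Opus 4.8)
The plan is to first settle existence and uniqueness by recognizing \eqref{eqcauchyproblemRLLV} as a special instance of \eqref{eqcauchyproblemRL}, and then to verify that the Duhamel candidate solves the associated integral equation, concluding by uniqueness. First I would set $f(x,t) := A(t) \times x + B(t)$. Since $A \in \L^\infty_\loc(I,\R^{m\times m})$, on every $[c,d] \subset I$ one has $\vert f(x_2,t)-f(x_1,t)\vert_m = \vert A(t)\times(x_2-x_1)\vert_m \leq L\vert x_2-x_1\vert_m$ for almost every $t$, so $f$ satisfies \eqref{HgloblipRL}; moreover $f(0,\cdot) = B \in \L^\infty_\loc \subset \L^1_\loc$, so \eqref{HL1zero} holds. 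Theorem~\ref{thm1RL} then yields a unique global solution $q$, and since \eqref{HgloblipRL} together with \eqref{HL1zero} force \eqref{HL1}, Proposition~\ref{prop1RL} applies: $q$ is characterized as the unique $\L^1_\loc$ function satisfying the integral equation with this $f$. It therefore suffices to prove that
\[ \tilde q(t) := Z(t,a)\times q_a + \int_a^t Z(t,s)\times B(s)\;ds \]
satisfies that same integral equation. Well-definedness of $\tilde q$ and its membership in $\L^1_\loc(I,\R^m)$ follow from the bound $\vert Z_{ij}(t,s)\vert \leq (t-s)^{\a_{ij}-1}\Theta^b$ of Lemma~\ref{lemimportant} together with $B \in \L^\infty_\loc$.

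Next I would substitute into $\tilde q$ the integral representation of $Z$ from Definition~\ref{defRLstate}. The crucial structural fact is that, because $Z$ is associated to the row constant order $\aa$, the kernel $\left[\frac{1}{\Gamma(\aa)}(t-s)^{\aa-1}\right]$ is row constant, i.e. equal to $\overline{v}$ with $v := \left[\frac{1}{\Gamma(\a)}(t-s)^{\a-1}\right]$. This is exactly what makes Lemma~\ref{lemcalcul} applicable: $(\overline{v}\otimes\ID_m)\times q_a = v \otimes q_a$, and more generally $(\overline{v}\otimes[A(\t)\times Z])\times X = v\otimes(A(\t)\times(Z\times X))$ for any vector $X$. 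Applying these identities to $Z(t,a)\times q_a$ produces the leading term $\left[\frac{1}{\Gamma(\a)}(t-a)^{\a-1}\right]\otimes q_a$ plus a convolution term carrying $A(\t)\times(Z(\t,a)\times q_a)$; applying them inside $\int_a^t Z(t,s)\times B(s)\,ds$ produces $\int_a^t\left[\frac{1}{\Gamma(\a)}(t-s)^{\a-1}\right]\otimes B(s)\,ds$ plus an iterated integral carrying $A(\t)\times(Z(\t,s)\times B(s))$, where I have used linearity of $\times$ and $\otimes$ to move the constant $q_a$ and the factor $B(s)$ inside the integrals before invoking the lemma.

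Finally I would invoke Fubini on the iterated integral — legitimate precisely because the bound of Lemma~\ref{lemimportant} makes the double integrand absolutely integrable over $\{a\leq s\leq\t\leq t\}$ — to rewrite the inner integral as $A(\t)\times\int_a^\t Z(\t,s)\times B(s)\,ds$. Adding the two convolution contributions and recognizing $Z(\t,a)\times q_a + \int_a^\t Z(\t,s)\times B(s)\,ds = \tilde q(\t)$, the whole expression collapses to
\[ \tilde q(t) = \left[\frac{1}{\Gamma(\a)}(t-a)^{\a-1}\right]\otimes q_a + \int_a^t\left[\frac{1}{\Gamma(\a)}(t-\t)^{\a-1}\right]\otimes\big(A(\t)\times\tilde q(\t)+B(\t)\big)\,d\t, \]
which is exactly the integral equation of Proposition~\ref{prop1RL} for $f(x,t)=A(t)\times x+B(t)$. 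By the uniqueness part of that proposition, $\tilde q = q$, which is the claimed Duhamel formula. The main obstacle is bookkeeping: keeping the interplay between the Hadamard product $\otimes$, the matrix product $\times$ and the row constant order $\aa$ under control so that Lemma~\ref{lemcalcul} can be applied at each step, and justifying the Fubini exchange through Lemma~\ref{lemimportant} rather than by a naive estimate, which would fail at the integrable singularity $(t-\t)^{\a_i-1}$.
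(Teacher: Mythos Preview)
Your proposal is correct and follows essentially the same route as the paper's proof: both substitute the integral representation of $Z(\cdot,\cdot)$ from Definition~\ref{defRLstate} into the Duhamel candidate, apply Lemma~\ref{lemcalcul} (exploiting the row constant structure of $\aa$), then use Fubini to recombine terms into the integral equation of Proposition~\ref{prop1RL} and conclude by uniqueness. You add a few justifications the paper leaves implicit (checking \eqref{HL1zero} and \eqref{HgloblipRL}, and invoking Lemma~\ref{lemimportant} to legitimize the Fubini step), but the argument is the same.
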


\begin{theorem}[Duhamel formula]\label{thmduhamelC}
The non-homogeneous linear vector Cauchy problem given by
\begin{equation}\label{eqcauchyproblemCLV}\tag{${}_\c$LVCP}
\left\lbrace \begin{array}{l}
\CDm [q](t) = A(t) \times q(t) + B(t),  \\
q(a) = q_a, 
\end{array}
\right.
\end{equation}
admits a unique maximal solution, that is moreover global, denoted by $q$ and it is given by the fractional Duhamel formula
$$ q(t) = {}_\c Z(t,a) \times q_a + \int_a^t Z(t,s) \times B(s) \; ds , $$
for every $t \in I$.
\end{theorem}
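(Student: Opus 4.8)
The plan is to first settle existence and uniqueness and then to verify that the function given by the Duhamel formula is this solution, by plugging it into the integral representation of Proposition~\ref{prop1C}. For the dynamic $f(x,t) := A(t) \times x + B(t)$, since $A \in \L^\infty_\loc(I,\R^{m \times m})$ and $B \in \L^\infty_\loc(I,\R^m)$, one checks directly that $f$ satisfies~\eqref{Hinfty} (from $\vert f(x,t) \vert_m \leq \Vert A(t) \Vert \, \vert x \vert_m + \vert B(t) \vert_m$ and the local essential boundedness of $A$, $B$) and~\eqref{HgloblipC} (since $\vert f(x_2,t) - f(x_1,t) \vert_m \leq \Vert A(t) \Vert \, \vert x_2 - x_1 \vert_m$). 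As $\Omega = \R^m$, Theorems~\ref{thm1} and~\ref{thm3} then guarantee that~\eqref{eqcauchyproblemCLV} admits a unique maximal solution, which is moreover global; call it $q$.

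It then suffices to show that the candidate
$$ \widetilde{q}(t) := {}_\c Z(t,a) \times q_a + \int_a^t Z(t,s) \times B(s) \; ds $$
satisfies the fixed-point equation $\widetilde{q} = q_a + \Im[A \times \widetilde{q} + B]$ of Proposition~\ref{prop1C}; by the uniqueness just established this forces $\widetilde{q} = q$. I would treat the two terms separately. For the $q_a$-term, I would start from the integral equation for ${}_\c Z(\cdot,a)$ in Definition~\ref{defCstate}, multiply it on the right by $q_a$, commute the matrix-vector product with the integral, and apply Lemma~\ref{lemcalcul} (with $\overline{A}$ the row-constant matrix $[\frac{1}{\Gamma(\aa)}(t-\t)^{\aa-1}]$ and $B = A(\t) \times {}_\c Z(\t,a)$) to obtain exactly
$$ {}_\c Z(t,a) \times q_a = q_a + \Im\big[A \times ({}_\c Z(\cdot,a) \times q_a)\big](t) . $$

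For the $B$-term, set $P(t) := \int_a^t Z(t,s) \times B(s) \, ds$. Starting from the integral equation for $Z(\cdot,s)$ in Definition~\ref{defRLstate}, multiplying on the right by $B(s)$ and integrating in $s$ over $[a,t]$, the leading singular term produces $\Im[B](t)$ after one application of Lemma~\ref{lemcalcul}, while the remaining double integral, again after Lemma~\ref{lemcalcul}, equals
$$ \int_a^t \int_s^t \Big[\tfrac{1}{\Gamma(\a)}(t-\t)^{\a-1}\Big] \otimes \Big( A(\t) \times [Z(\t,s) \times B(s)] \Big) \, d\t \, ds . $$
Swapping the order of integration (Fubini on the region $a \le s \le \t \le t$) and pulling the $s$-integral through the Hadamard product and the matrix product $A(\t) \times$, this reduces to $\Im[A \times P](t)$. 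Hence $P = \Im[A \times P] + \Im[B] = \Im[A \times P + B]$, and adding the $q_a$-identity yields $\widetilde{q} = q_a + \Im[A \times \widetilde{q} + B]$, as required.

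The main obstacle is the justification of the Fubini exchange together with the attendant continuity and integrability bookkeeping: the entries $Z(t,s)$ are singular along $s = t$, so one must invoke the estimate $\vert Z_{ij}(t,s) \vert \leq (t-s)^{\a_{ij}-1} \Theta^b$ of Lemma~\ref{lemimportant} to guarantee absolute integrability of the double integral (the exponents $\a_{ij}-1 = \a_i - 1 > -1$ keep each iterated integral finite) before swapping orders. The same bound shows $P \in \L^\infty_\loc(I,\R^m)$, so that $A \times P + B \in \L^\infty_\loc$ and hence $\Im[A \times P + B] \in \C_a$ by Proposition~\ref{prop3}; combined with the continuity of ${}_\c Z(\cdot,a)$ this makes $\widetilde{q}$ continuous, legitimizing the application of Proposition~\ref{prop1C} and the pointwise ``for every $t$'' conclusion. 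A secondary subtlety, already exploited above, is that the row-constant structure of $\aa$ is exactly what makes Lemma~\ref{lemcalcul} applicable, and it explains why the Duhamel formula mixes the Caputo matrix ${}_\c Z$ in the initial-data term with the R-L matrix $Z$ in the forcing term.
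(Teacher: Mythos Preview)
Your proposal is correct and follows essentially the same route as the paper: expand ${}_\c Z(\cdot,a)$ and $Z(\cdot,s)$ via their defining integral equations, apply Lemma~\ref{lemcalcul} to pass from the row-constant $\aa$-kernel to the vector $\a$-kernel, use Fubini on the double integral (justified by Lemma~\ref{lemimportant}), and conclude via Proposition~\ref{prop1C}. The only cosmetic difference is that you treat the $q_a$-term and the $B$-term separately while the paper handles them in a single combined computation; your added remarks on the integrability and continuity bookkeeping are sound and in fact make the argument slightly more explicit than the paper's.
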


\noindent In the fractional Duhamel formula associated to~\eqref{eqcauchyproblemCLV}, note that both $Z(\cdot,\cdot)$ and ${}_\c Z(\cdot,\cdot)$ are involved.

\begin{remark}
By curiosity one would wonder what are the Cauchy problems associated to the functions $q_1$, $q_2$ defined by
$$ q_1 (t) := Z(t,a) \times q_a + \int_a^t {}_\c Z(t,s) \times B(s) \; ds  \; \; \; \; \text{and} \; \; \; \;  q_2 (t) := {}_\c Z(t,a) \times q_a + \int_a^t {}_\c Z(t,s) \times B(s)  \; ds  . $$
Similarly to the proofs of Theorems~\ref{thmduhamelRL} and \ref{thmduhamelC}, it can be proved that $q_1$ is the unique global solution of 
\begin{equation*}
\left\lbrace \begin{array}{l}
\Dm [q](t) = A(t) \times q(t) + \I^{1-\a}_{a+}[B](t),  \\
\I^{1-\a}_{a+}[q](a) = q_a, 
\end{array}
\right.
\end{equation*}
and $q_2$ is the unique global solution of
\begin{equation*}
\left\lbrace \begin{array}{l}
\CDm [q](t) = A(t) \times q(t) + \I^{1-\a}_{a+}[B](t),  \\
q(a) = q_a.
\end{array}
\right.
\end{equation*}
\end{remark}

\subsection{Preliminaries and recalls on right fractional operators}\label{sectransition3}
In Section~\ref{secbasics1} we have recalled the usual definitions and results about left fractional operators. The corresponding right fractional operators are defined as follows. We fix $b \in \R$ and $I \in \IIp$ where 
$$ \IIp := \{ I \subset \R \text{ interval such that } \{ b \} \varsubsetneq I \subset (-\infty,b] \}. $$

\begin{definition}
The right R-L fractional integral $\Ip [q]$ of order $\a > 0$ of $q \in \L^1_\loc (I,\R)$ is defined on $I$ by
$$ \Ip [q](t) :=\int_t^b \dfrac{1}{\Gamma(\a)}  (\t-t)^{\a-1} q(\t) \; d\t, $$
provided that the right-hand side term exists. For $\alpha = 0$ and $q \in \L^1_\loc (I,\R)$, we define $\I^0_{b-}[q] := q$.
\end{definition}

\begin{definition}
We say that $q \in \L^1_\loc (I,\R)$ possesses on $I$ a right R-L fractional derivative $\Dp [q]$ of order $0 \leq \a \leq 1$ if and only if $\I^{1-\a}_{b-} [q] \in \AC_\loc (I,\R)$. In that case $\Dp [q]$ is defined by
$$ \Dp [q](t) := - \dfrac{d}{dt} \Big[ \I^{1-\a}_{b-} [q] \Big] (t) , $$
for almost every $t \in I$. In particular $\Dp [q] \in \L^1_\loc(I,\R)$.
\end{definition}

\begin{definition}
Let $0 \leq \a \leq 1$. We denote by $\AC^\alpha_{b-}(I,\R)$ the set of all functions $q \in \L^1_\loc (I,\R)$ possessing on $I$ a right R-L fractional derivative $\Dp [q]$ of order $\a$.
\end{definition}

\begin{definition}
We say that $q \in \C(I,\R)$ possesses on $I$ a right Caputo fractional derivative $\CDp [q]$ of order $0 \leq \a \leq 1$ if and only if $q-q(b) \in \AC^\a_{b-} (I,\R)$. In that case $\CDp [q]$ is defined by
$$ \CDp [q](t) := \Dp [q-q(b)] (t) , $$
for almost every $t \in I$. In particular $\CDp [q] \in \L^1_\loc(I,\R)$.
\end{definition}

\begin{definition}
Let $0 \leq \a \leq 1$. We denote by ${}_\c \AC^\alpha_{b-}(I,\R)$ the set of all functions $q \in \C(I,\R)$ possessing on $I$ a right Caputo fractional derivative $\CDp [q]$ of order $\a$.
\end{definition}

\noindent All results of Section~\ref{secbasics1} can be extended to right fractional operators. Similarly to Section~\ref{secmultiorder}, the right fractional operators can be extended to matrix functions and to matrix fractional multi-orders. Finally, all results about left Cauchy problems obtained in Sections~\ref{seccauchyproblems} and \ref{sectransition} can also be adapted to the right case.

\subsection{Duality theorems}\label{sectransition4}

In this section we fix $a \in \R$, $I \in \IIm$ and $m \in \N^*$. Let $A = (A_{ij}) \in \L^\infty_\loc(I,\R^{m \times m})$ be a square matrix function and let $\a = (\a_{i}) \in (0,1]^{m}$ be a vector fractional multi-order. 

\smallskip

The following duality theorem states that the left R-L state-transition matrix associated to $A$ and $\aa \in (0,1]^{m \times m}$ coincides with the right R-L state transition matrix associated to $A$ and $\aaa \in (0,1]^{m \times m}$.

\begin{theorem}[Duality theorem]\label{thmduality}
Let $Z(\cdot,\cdot)$ be the left R-L state-transition matrix associated to $A$ and $\aa \in (0,1]^{m \times m}$. Then, $Z(t,\cdot)$ is the unique (global) solution of
\begin{equation*}
\left\lbrace \begin{array}{l}
\D^{\aaa}_{t-} [Z](s) = Z(s) \times A(s) ,  \\
\I^{1-\aaa}_{t-} [Z](t) = \ID_m,
\end{array}
\right.
\end{equation*}
for almost every $t \in I$, $t > a$.
\end{theorem}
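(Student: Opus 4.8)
The plan is to verify directly that the function $s \mapsto Z(t,s)$ satisfies the claimed right-handed Cauchy problem, and then invoke the (right-hand analogue of the) uniqueness result of Theorem~\ref{thm1RL} to conclude it is \emph{the} solution. The starting point is the integral characterization from Definition~\ref{defRLstate}: for almost every $t > s$,
$$ Z(t,s) = \left[ \dfrac{1}{\Gamma(\aa)} (t-s)^{\aa-1} \right] \otimes \ID_m + \int_s^t \left[ \dfrac{1}{\Gamma(\aa)} (t-\t)^{\aa-1} \right] \otimes \Big[ A(\t) \times Z(\t,s) \Big] \; d\t. $$
Here $\aa$ is the row-constant matrix built from the vector $\a = (\a_i)$, so $\aa_{ij} = \a_i$ and the transpose $\aaa = \aa^\top$ is column-constant with $\aaa_{ij} = \a_j$. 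Fixing $t$ and regarding the above as an identity in the variable $s$, the first (obstacle-free) step is to rewrite the right-hand side so that the $s$-dependence is exhibited as a \emph{right} R-L integral operator $\I^{\aaa}_{t-}$ acting on $s \mapsto Z(\cdot,s) \times A(\cdot)$. The key algebraic point is that swapping $(t-\t)^{\aa-1}$ against $(\t-s)^{\aaa-1}$ interchanges the roles of the left integral $\Im$ and the right integral $\Ip$, and this is exactly where Lemmas~\ref{lemcalcul}--\ref{lemcalcul3} on the Hadamard product and the matrices $\aa$, $\aaa$ are used to move the multi-order factor correctly across the matrix product $Z \times A$.

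Concretely, I would first establish the integral representation
$$ Z(t,s) = \left[ \dfrac{1}{\Gamma(\aaa)} (t-s)^{\aaa-1} \right] \otimes \ID_m + \int_s^t \left[ \dfrac{1}{\Gamma(\aaa)} (\t-s)^{\aaa-1} \right] \otimes \Big[ Z(t,\t) \times A(\t) \Big] \; d\t, $$
which is precisely the integral form (via the right-hand analogue of Proposition~\ref{prop1RL}) of the asserted Cauchy problem
$$ \D^{\aaa}_{t-}[Z](s) = Z(s) \times A(s), \qquad \I^{1-\aaa}_{t-}[Z](t) = \ID_m. $$
The passage from the first representation to the second is the analytic heart of the argument: it requires substituting the defining integral equation for $Z(\cdot,s)$ into itself (or equivalently iterating the Picard/Neumann series that produces $Z$), and then applying a Fubini-type exchange of the order of integration together with the semigroup property of fractional integrals from Proposition~\ref{prop5}. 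After the exchange, the inner convolution $\int (t-\t)^{\aa-1}(\t-\sigma)^{\aa-1}\,d\t$ collapses via the Beta-function identity underlying Proposition~\ref{prop5}, and the row/column transposition of the multi-order is what turns left integration over $[s,t]$ into right integration over the same interval.

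The main obstacle I anticipate is bookkeeping the Hadamard multi-order factors rigorously while performing the Fubini exchange, because the order $\aa$ is attached entrywise through $\otimes$ and does \emph{not} commute freely through the matrix product $A(\t) \times Z(\t,s)$. This is exactly the difficulty that Lemmas~\ref{lemcalcul2} and~\ref{lemcalcul3} are designed to resolve: they let me pull the row-constant factor $\overline{\cdot}$ out of a product and re-express it as a column-constant factor $\underline{\cdot}$ on the other side, which is the algebraic manifestation of the duality $\aa \leftrightarrow \aaa$. I would therefore handle the convergence and integrability technicalities by invoking Lemma~\ref{lemimportant}, which gives the pointwise bound $\vert Z_{ij}(t,s) \vert \leq (t-s)^{\aa_{ij}-1}\Theta^b$ guaranteeing that all the iterated integrals are absolutely convergent and that Fubini applies. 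Once the second integral representation is in hand, the right-handed version of Theorem~\ref{thm1RL} (whose applicability is asserted in Section~\ref{sectransition3}, where it is noted that all results extend to the right case) delivers existence and uniqueness, so $Z(t,\cdot)$ is \emph{the} solution, completing the proof for almost every $t \in I$ with $t > a$.
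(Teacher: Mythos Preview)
Your overall strategy and the tools you name (Fubini, Lemmas~\ref{lemcalcul4}--\ref{lemcalcul3}, Lemma~\ref{lemimportant}, and then uniqueness) are correct and line up with the paper. However, your account of the central manipulation contains a real misconception: there is \emph{no} use of the semigroup property (Proposition~\ref{prop5}) and \emph{no} Beta-function collapse of a convolution $\int (t-\tau)^{\aa-1}(\tau-\sigma)^{\aa-1}\,d\tau$ in this proof. Such a collapse would arise only when composing two \emph{left} (or two right) fractional integrals, and that never happens here; the kernels $(t-\tau)^{\aa-1}$ and $(\tau-s)^{\aaa-1}$ stay distinct throughout, and for non-constant $A$ a product $(t-\tau)^{\aa-1}A(\tau)(\tau-s)^{\aa-1}$ cannot be integrated via the Beta identity anyway. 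Your alternative suggestion of iterating the Picard/Neumann series and matching term by term can be made to work, but again without any Beta-function step: the $n$-th left term equals the $n$-th right term purely by Fubini plus the Hadamard lemmas.

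The paper's route is shorter and worth comparing. Rather than manipulate $Z$ directly, it \emph{defines} $T(t,s)$ to be the right-hand integral expression you wrote (with $Z$ itself in the integrand), substitutes the left integral equation for $Z(t,\tau)$ into that integrand, applies Fubini once, uses Lemmas~\ref{lemcalcul2} and~\ref{lemcalcul3} to swap the $\aa$/$\aaa$ Hadamard factors across the matrix product, and finds that $T(\cdot,s)$ satisfies the \emph{left} integral equation. Uniqueness of the left problem (Theorem~\ref{thm1RL}, already established) then gives $T=Z$, which is exactly the right-hand representation you were after. So the argument is a single substitution and one Fubini exchange, with the Hadamard algebra doing the $\aa\leftrightarrow\aaa$ transposition; no series iteration and no fractional-integral composition are required.
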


The exact analogous of the above theorem for the left Caputo state-transition matrix does not hold true in general. Indeed, one can easily see that the proof of Theorem~\ref{thmduality} cannot be adapted to this case. Nevertheless, the following duality theorem can be proved if $A(\cdot) = A$ is constant.

\begin{theorem}[Duality theorem]\label{thmduality2}
Let us assume that $A(\cdot) = A$ is constant and let ${}_\c Z(\cdot,\cdot)$ be the left Caputo state-transition matrix associated to $A$ and $\aa \in (0,1]^{m \times m}$. Then, ${}_\c Z(t,\cdot)$ is the unique maximal solution, that is moreover global, of
\begin{equation*}
\left\lbrace \begin{array}{l}
\CD^{\aaa}_{t-} [Z](s) = Z(s) \times A(s) ,  \\
Z(t) = \ID_m,
\end{array}
\right.
\end{equation*}
for every $t \in I$, $t > a$.
\end{theorem}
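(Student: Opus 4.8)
The plan is to exploit the constancy of $A$ in an essential way: it is precisely what allows the left Caputo state-transition matrix to be reduced to a function of a single variable, after which a reflection argument converts the already-solved left Cauchy problem into the right one. First I would show that ${}_\c Z(t,s)$ depends only on the difference $t-s$. Indeed, since $A(\cdot)=A$ is constant and the multi-order $\aa$ is constant in time, the change of variable $\t \mapsto \t+s$ maps the integral equation of Definition~\ref{defCstate} for ${}_\c Z(\cdot,s)$ onto the very same equation with lower bound $0$; by the uniqueness part of Theorems~\ref{thm1} and~\ref{thm3}, this forces ${}_\c Z(t,s)=W(t-s)$ for a single matrix function $W$ solving $W(\tau)=\ID_m+\int_0^\tau \left[ \frac{1}{\Gamma(\aa)}(\tau-u)^{\aa-1}\right]\otimes[A\times W(u)]\,du$. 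This step is the reason the statement requires $A$ constant, and it is exactly what fails for non-constant $A$ (cf. the Remark preceding the theorem), in contrast with the R-L Theorem~\ref{thmduality}.

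Next I would establish the reflection identities linking left and right operators. For a scalar function of the form $g(s)=h(t-s)$, the substitution $u=t-\t$ gives $\I^{\beta}_{t-}[g](s)=\I^{\beta}_{0+}[h](t-s)$ for every $\beta>0$, and differentiating this relation yields the analogue for the right Caputo derivative, namely $\CD^{\beta}_{t-}[g](s)=\CD^{\beta}_{0+}[h](t-s)$ with the \emph{same} order $\beta$ (using $g(t)=h(0)$ for the Caputo correction). Applying these componentwise to $s\mapsto{}_\c Z(t,s)=W(t-s)$ and feeding in the differential identity satisfied by $W$ (read off from its integral equation above) transfers the governing equation together with the condition $W(0)=\ID_m$ into a right-sided Caputo Cauchy problem for $s\mapsto{}_\c Z(t,s)$ with terminal condition $\ID_m$ at $s=t$. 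The bookkeeping of the matrix multi-order is handled through the observation that the Hadamard product with a row (resp.\ column) constant multi-order amounts to a left (resp.\ right) multiplication by a diagonal matrix, so that the matrix identities of Section~\ref{secbasics2} (Lemmas~\ref{lemcalcul4}--\ref{lemcalcul3}) can be invoked to track the position of $A$ and the placement of the order $\aaa$ under reflection.

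Finally I would conclude by uniqueness. Since $A\in\L^\infty_\loc$ guarantees~\eqref{Hinfty} and the linear dynamic is globally Lipschitz, the right analogues of Proposition~\ref{prop1C} and of Theorems~\ref{thm1} and~\ref{thm3} apply to the right-sided problem; having shown that $s\mapsto{}_\c Z(t,s)$ satisfies its associated integral representation, it must coincide with the unique global solution, which is the claim. The main obstacle I anticipate is the rigorous reflection step in the multi-order matrix setting: one must verify that the reflection identity interchanges the left fractional integral from the lower bound with the right fractional integral from the upper bound precisely when the kernel depends only on the difference of its arguments, and then check carefully that the right-hand side and the multi-order produced by the computation are exactly those displayed in the statement. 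This is the only place where the argument genuinely departs from the R-L case, and the only place where the hypothesis $A(\cdot)=A$ is indispensable.
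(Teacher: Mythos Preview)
Your route is genuinely different from the paper's. The paper does not reduce to a single-variable function $W$ nor invoke a reflection identity for fractional operators. Instead it works directly with two variables: it sets
\[
T(t,s):=\ID_m+\int_s^t\Bigl[\tfrac{1}{\Gamma(\aaa)}(\tau-s)^{\aaa-1}\Bigr]\otimes\bigl[{}_\c Z(t,\tau)\times A\bigr]\,d\tau
\]
(the right-problem integral representation evaluated at ${}_\c Z(t,\cdot)$), substitutes the left integral equation of Definition~\ref{defCstate} for ${}_\c Z(t,\tau)$ inside the integrand, and then applies Fubini together with Lemmas~\ref{lemcalcul2}--\ref{lemcalcul3} and a change of variable (this is the sole place where constancy of $A$ is used) to show that $T(\cdot,s)$ satisfies the \emph{left} integral equation characterising ${}_\c Z(\cdot,s)$. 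Uniqueness from Theorems~\ref{thm1} and~\ref{thm3} then forces $T={}_\c Z$, which is exactly the claim.

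There is a real gap in your plan at the step you label ``bookkeeping''. The reflection identity $\CD^{\beta}_{t-}[g](s)=\CD^{\beta}_{0+}[h](t-s)$ leaves the order $\beta$ unchanged componentwise. Consequently, reflecting the left problem for $W$ yields that $s\mapsto W(t-s)$ solves the right problem $\CD^{\aa}_{t-}[Z](s)=A\times Z(s)$, with multi-order $\aa$ and with $A$ on the \emph{left}; the theorem, however, asserts the right problem $\CD^{\aaa}_{t-}[Z](s)=Z(s)\times A$, with multi-order $\aaa$ and $A$ on the \emph{right}. Passing from the former to the latter is not a matter of rewriting Hadamard products as diagonal multiplications (your diagonal-matrix remark only restates the two problems, it does not identify them), and Lemmas~\ref{lemcalcul4}--\ref{lemcalcul3} by themselves do not deliver it. What your reduction to $W(t-s)$ buys is cleaner one-variable notation, but the essential computation --- iterating the integral equation once, swapping the resulting double integral by Fubini, and using the matrix lemmas to commute the $\aa$/$\aaa$ kernels past $A$ --- still has to be carried out, and that is precisely the argument the paper supplies.
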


\appendix

\section{Proofs of Section~\ref{sectionCauchyRL}}\label{appsectionCauchyRL}

\subsection{Proof of Proposition~\ref{prop1RL}}

We first prove the necessary condition. Let $q : I_f \to \R^m$ be a (global) solution of~\eqref{eqcauchyproblemRL}. Since $\I^{1-\a}_{a+} [q] \in \AC_\loc(I_f,\R^m)$ and $\I^{1-\a}_{a+} [q](a) = q_a$, it holds that
$$ \I^{1-\a}_{a+} [q] = \I^{1-\a}_{a+} [q](a) + \I^1_{a+} \left[ \frac{d}{dt} \Big[ \I^{1-\a}_{a+} [q] \Big] \right] = q_a + \I^1_{a+} \Big[ \Dm [q] \Big] = q_a + \I^1_{a+} [ f(q,\cdot) ] , $$
everywhere on $I_f$. Since $q$, $q_a$ and $\Dm [q] = f(q,\cdot) \in \L^1_\loc (I_f,\R^m)$, it holds from Proposition~\ref{prop5} that
$$ \I^1_{a+} [q] = \Im [q_a] + \I^1_{a+} \Big[ \Im [f(q,\cdot)] \Big] ,$$
almost everywhere on $I_f$, and then everywhere on $I_f$ from continuity. Since $q$ and $ \Im [f(q,\cdot)]  \in \L^1_\loc (I_f,\R^m)$, differentiating the previous equality leads to
$$ q(t) = \D^{1-\a}_{a+} [q_a] (t) + \Im [f(q,\cdot] (t) ,$$
for almost every $t \in I_f$.\footnote{Note that Hypothesis~\eqref{HL1} is not required for the necessary condition.} 

\smallskip

Now let us prove the sufficient condition. Since $q \in \L^1_\loc (I_f,\R^m)$ and since $f$ satisfies~\eqref{HL1}, it holds that $f(q,\cdot) \in \L^1_\loc(I_f,\R^m)$. We also know that $\D^{1-\a}_{a+} [q_a] \in \L^1_\loc(I_f,\R^m)$ and one can easily prove from the classical Beta function that $\I^{1-\a}_{a+} [ \D^{1-\a}_{a+} [q_a] ] = q_a$ everywhere on $I_f$. Finally, since we have
$$ q = \D^{1-\a}_{a+} [q_a] + \Im [f(q,\cdot)], $$
almost everywhere on $I_f$, we get from Proposition~\ref{prop5} that
$$ \I^{1-\a}_{a+} [q] = q_a + \I^1_{a+} [ f(q,\cdot)] , $$ 
almost everywhere on $I_f$. Since $f(q,\cdot) \in \L^1_\loc(I_f,\R^m)$, we have $\I^1_{a+} [ f(q,\cdot)] \in \AC_{a,\loc}(I_f,\R^m)$. Then $\I^{1-\a}_{a+}[q]$ can be identified to $q_a + \I^1_{a+} [ f(q,\cdot)] \in \AC_\loc(I_f,\R^m)$. Thus $q \in \AC^\a_{a+}(I_f,\R^m)$ with $\I^{1-\a}_{a+}[q](a) = q_a$ and 
$$ \Dm [q] = \dfrac{d}{dt} \Big[  \I^{1-\a}_{a+} [q]  \Big] = f(q,\cdot) , $$
almost everywhere on $I_f$.

\subsection{Preliminary lemmas for Theorem~\ref{thm1RL}}
We introduce
$$ \IImf := \{ I \in \IIm \text{ such that } I \subset I_f \}. $$
In order to prove Theorem~\ref{thm1RL} in the next section, we first prove in this section two preliminary lemmas.

\begin{lemma}\label{lem1}
Let $I \in \IImf$. If $q : I_f \to \R^m$ is a \textit{(global) solution} of~\eqref{eqcauchyproblemRL}, then the restriction $q_{|I} : I \to \R^m$ is a \textit{(global) solution} of the restricted Cauchy problem~\eqref{eqcauchyproblemRLrestricted} given by
\begin{equation}\label{eqcauchyproblemRLrestricted}\tag{VCP${}_{|I}$}
\left\lbrace \begin{array}{l}
\Dm [q](t) = f_{|I}(q(t),t),  \\
\I^{1-\a}_{a+} [q](a) = q_a,
\end{array}
\right.
\end{equation}
where $f_{|I}$ is the restriction $f_{|I} : \R^m \times I \to \R^m$ of $f$.
\end{lemma}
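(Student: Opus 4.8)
The plan is to show that restricting a global solution $q$ of \eqref{eqcauchyproblemRL} to a subinterval $I \in \IImf$ yields a global solution of \eqref{eqcauchyproblemRLrestricted}. Since the restricted problem has exactly the same initial data $q_a$ and the same fractional order $\a$, but on the smaller interval $I$ with dynamic $f_{|I}$, the argument should be essentially a verification that each of the three defining properties of a solution survives restriction. Concretely, I would check in turn that $q_{|I} \in \AC^\a_{a+}(I,\R^m)$, that $\I^{1-\a}_{a+}[q_{|I}](a) = q_a$, and that $\Dm[q_{|I}](t) = f_{|I}(q_{|I}(t),t)$ for almost every $t \in I$.

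The key observation making all of this work is the \emph{locality of the left R-L operators at the lower endpoint $a$}. Because $a \in I \subset I_f$ and both integrals $\Im$ and $\I^{1-\a}_{a+}$ are taken from the same lower bound $a$, the value $\Im[q_{|I}](t)$ for $t \in I$ depends only on the values of $q$ on $[a,t] \subset I$; hence $\I^{1-\a}_{a+}[q_{|I}] = \left(\I^{1-\a}_{a+}[q]\right)_{|I}$ on $I$. First I would use this to transfer the absolute continuity: since $\I^{1-\a}_{a+}[q] \in \AC_\loc(I_f,\R^m)$ by hypothesis, its restriction to $I$ lies in $\AC_\loc(I,\R^m)$ (any compact subinterval of $I$ is a compact subinterval of $I_f$), so indeed $\I^{1-\a}_{a+}[q_{|I}] \in \AC_\loc(I,\R^m)$, which is precisely $q_{|I} \in \AC^\a_{a+}(I,\R^m)$. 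The initial condition is immediate from the same identity evaluated at $t=a$.

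For the fractional differential equation, differentiating the identity $\I^{1-\a}_{a+}[q_{|I}] = \left(\I^{1-\a}_{a+}[q]\right)_{|I}$ gives $\Dm[q_{|I}](t) = \Dm[q](t)$ for almost every $t \in I$, and since $q$ solves \eqref{eqcauchyproblemRL} on $I_f$ we have $\Dm[q](t) = f(q(t),t) = f_{|I}(q_{|I}(t),t)$ for almost every $t \in I$, as $q_{|I}(t) = q(t)$ there. This closes the verification. I do not expect any serious obstacle here: the entire content is the locality-at-$a$ property of the left operators together with the stability of $\AC_\loc$ under restriction to subintervals. The only point requiring a line of care is justifying that the restriction identity $\I^{1-\a}_{a+}[q_{|I}] = \left(\I^{1-\a}_{a+}[q]\right)_{|I}$ holds pointwise (not merely almost everywhere) on $I$, which follows because both sides are identified with their continuous representatives under the convention fixed after Proposition~\ref{prop2}; this lets me evaluate at $t=a$ legitimately for the initial condition.
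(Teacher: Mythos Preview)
Your proposal is correct and follows essentially the same approach as the paper's own proof: both hinge on the restriction identity $\I^{1-\a}_{a+}[q_{|I}] = \big(\I^{1-\a}_{a+}[q]\big)_{|I}$ on $I$, then verify in turn membership in $\AC^\a_{a+}(I,\R^m)$, the initial condition at $t=a$, and the differential equation almost everywhere. The only minor addition in the paper is an explicit mention that $q_{|I} \in \L^1_\loc(I,\R^m)$, which you use implicitly.
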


\begin{proof}
Since $q \in \AC^\a_{a+} (I_f,\R^m)$, we have $q \in \L^1_\loc(I_f,\R^m)$ and $\I^{1-\a}_{a+} [q] \in \AC_\loc(I_f,\R^m)$. Moreover one can easily prove that $\I^{1-\a}_{a+} [q]_{|I} = \I^{1-\a}_{a+} [q_{|I}]$ on $I$. Thus $q_{|I} \in \L^1_\loc(I,\R^m)$ and $\I^{1-\a}_{a+} [q_{|I}] \in \AC_\loc(I,\R^m)$, that is $q_{|I} \in \AC^\a_{a+} (I,\R^m)$. Moreover $\I^{1-\a}_{a+} [q_{|I}](a) = \I^{1-\a}_{a+} [q](a) = q_a$ and $\Dm [q_{|I}] (t) =  \Dm[q](t) = f(q(t),t) = f_{|I}(q_{|I}(t),t)$ for almost every $t \in I$. 
\end{proof}

\begin{lemma}\label{lem2}
Let $b > a$ and $k \in \N$. The Bielecki norm defined on $\L^1([a,b],\R^m)$ by
$$ \Vert q \Vert_{1,k} := \int_a^b e^{-k(\t-a)} \vert q(\t) \vert_m \; d\t , $$
is equivalent to the classical norm $\Vert \cdot \Vert_1$. In particular, $\L^1([a,b],\R^m)$ endowed with the Bielecki norm $\Vert \cdot \Vert_{1,k}$ is complete.
\end{lemma}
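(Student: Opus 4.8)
The plan is to prove the equivalence of the Bielecki norm $\Vert \cdot \Vert_{1,k}$ with the classical $\L^1$-norm $\Vert \cdot \Vert_1$ on $\L^1([a,b],\R^m)$, and then deduce completeness. The key observation is that the weight $e^{-k(\t-a)}$ is a bounded, strictly positive, continuous function on the compact interval $[a,b]$, so it is squeezed between two positive constants. First I would note that for every $\t \in [a,b]$ we have $0 < a \leq \t \leq b$ in the sense that $0 \leq \t - a \leq b - a$, and hence
$$ e^{-k(b-a)} \leq e^{-k(\t-a)} \leq 1 , $$
since the exponential is decreasing and $k \geq 0$. Multiplying these inequalities by $\vert q(\t) \vert_m \geq 0$ and integrating over $[a,b]$ preserves them, yielding
$$ e^{-k(b-a)} \Vert q \Vert_1 \leq \Vert q \Vert_{1,k} \leq \Vert q \Vert_1 , $$
for every $q \in \L^1([a,b],\R^m)$. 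This double inequality is exactly the statement that the two norms are equivalent, with explicit equivalence constants $e^{-k(b-a)} > 0$ and $1$.

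Next I would confirm that $\Vert \cdot \Vert_{1,k}$ is genuinely a norm: positive homogeneity and the triangle inequality follow immediately from the corresponding properties of $\vert \cdot \vert_m$ under the integral and the linearity of integration, while the separation axiom $\Vert q \Vert_{1,k} = 0 \Rightarrow q = 0$ (almost everywhere) follows from the lower bound just established, since $\Vert q \Vert_{1,k} = 0$ forces $\Vert q \Vert_1 = 0$. Thus $\Vert \cdot \Vert_{1,k}$ is a bona fide norm on $\L^1([a,b],\R^m)$.

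For the completeness claim, I would invoke the standard fact that two equivalent norms induce the same topology, and in particular the same Cauchy sequences and the same convergent sequences. Since $\L^1([a,b],\R^m)$ equipped with its usual norm $\Vert \cdot \Vert_1$ is a Banach space, any sequence that is Cauchy for $\Vert \cdot \Vert_{1,k}$ is also Cauchy for $\Vert \cdot \Vert_1$ by the equivalence, hence converges in $\Vert \cdot \Vert_1$, hence converges in $\Vert \cdot \Vert_{1,k}$ to the same limit. Therefore $\L^1([a,b],\R^m)$ endowed with the Bielecki norm is complete.

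I do not anticipate any real obstacle here, as the argument is entirely elementary; the only point requiring minimal care is the direction of the exponential inequality (one must use that $k \geq 0$ and $\t - a \geq 0$ to bound $e^{-k(\t-a)}$ below by $e^{-k(b-a)}$ rather than above). The compactness of $[a,b]$ is what makes the lower equivalence constant strictly positive, which is precisely what is needed for completeness and is the reason the Bielecki technique works on a compact interval.
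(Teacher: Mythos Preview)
Your proof is correct and follows essentially the same approach as the paper: bounding the weight $e^{-k(\tau-a)}$ between $e^{-k(b-a)}$ and $1$ on $[a,b]$ and integrating to obtain the double inequality $e^{-k(b-a)}\Vert q\Vert_1 \leq \Vert q\Vert_{1,k} \leq \Vert q\Vert_1$, which is exactly the paper's chain $\Vert q\Vert_{1,k}\leq\Vert q\Vert_1\leq e^{k(b-a)}\Vert q\Vert_{1,k}$ rewritten. Your additional remarks verifying the norm axioms and spelling out the completeness argument are fine but not strictly needed, since the paper treats these as immediate consequences of equivalence.
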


\begin{proof}
Indeed it holds that
\begin{multline*}
\Vert q \Vert_{1,k} = \int_a^b e^{-k(\t-a)} \vert q(\t) \vert_m \; d\t \leq  \int_a^b \vert q(\t) \vert_m \; d\t = \Vert q \Vert_{1} \\
\leq  \int_a^b e^{k(b-\t)} \vert q(\t) \vert_m \; d\t = e^{k(b-a)} \int_a^b e^{-k(\t-a)} \vert q(\t) \vert_m \; d\t = e^{k(b-a)} \Vert q \Vert_{1,k} ,
\end{multline*}
for every $q \in \L^1([a,b],\R^m)$.
\end{proof}

\subsection{Proof of Theorem~\ref{thm1RL}}

We first prove Theorem~\ref{thm1RL} in the case where $I_f = [a,b]$ with $b > a$. Let $L$ be associated with $[a,b] \subset I_f$ in \eqref{HgloblipRL}. In that case $\L^1_\loc(I_f,\R^m) = \L^1([a,b],\R^m)$ and we endow $\L^1([a,b],\R^m)$ with the Bielecki norm $\Vert \cdot \Vert_{1,k}$ provided in Lemma~\ref{lem2} with $k \in \N^*$ sufficiently large in order to have $\ell := L \sum_{i=1}^m \frac{1}{k^{\a_i}} < 1$. Since $f$ satisfies~\eqref{HL1zero} and \eqref{HgloblipRL}, $f$ satisfies~\eqref{HL1}. As a consequence we can correctly define the application
$$ \fonction{F}{\L^1([a,b],\R^m)}{\L^1([a,b],\R^m)}{y}{\D^{1-\a}_{a+}[q_a]+\Im[f(y,\cdot)].} $$
From Proposition~\ref{prop1RL}, our aim is to prove that $F$ admits a unique fixed point. Let $y_1$, $y_2 \in \L^1([a,b],\R^m)$. From~\eqref{HgloblipRL} and from the classical Fubini theorem, we obtain
\begin{equation*}
\begin{array}{rcl}
\Vert F(y_2) - F(y_1) \Vert_{1,k} & = & \di \int_a^b e^{-k(\t-a)} \vert \Im [ f(y_2,\cdot)-f(y_1,\cdot) ](\t) \vert_m \; d\t \\
& = & \di \int_a^b e^{-k(\t-a)} \left\vert \int_a^\t \left[ \dfrac{1}{\Gamma (\a)} (\t-s)^{\a-1} \right] \otimes ( f(y_2(s),s)- f(y_1(s),s) ) \; ds \right\vert_m d\t \\
& \leq & \di \int_a^b e^{-k(\t-a)} \sum_{i=1}^m \left\vert \dfrac{1}{\Gamma(\a_i)} \int_a^\t (\t-s)^{\a_i -1} ( f_i(y_2(s),s)- f_i(y_1(s),s) ) \; ds \right\vert d\t \\
& \leq & \di  \sum_{i=1}^m \dfrac{1}{\Gamma(\a_i)} \int_a^b \int_a^\t e^{-k(\t-a)} (\t-s)^{\a_i -1} \vert f_i(y_2(s),s)- f_i(y_1(s),s) \vert \; ds \; d\t \\
& \leq & \di L \sum_{i=1}^m \dfrac{1}{\Gamma(\a_i)} \int_a^b \int_a^\t e^{-k(\t-a)} (\t-s)^{\a_i -1} \vert y_2(s)-y_1(s) \vert_m \; ds \; d\t \\
& \leq & \di L \sum_{i=1}^m \dfrac{1}{\Gamma(\a_i)} \int_a^b  \vert y_2(s)-y_1(s) \vert_m \int_s^b e^{-k(\t-a)} (\t-s)^{\a_i -1} \; d\t  \; ds.
\end{array}
\end{equation*}
On the other hand, it holds that
\begin{multline*}
\int_s^b e^{-k(\t-a)} (\t-s)^{\a_i -1} \; d\t = e^{-k(s-a)} \int_0^{b-s} e^{-k\t} \t^{\a_i -1} \; d\t \\
\leq  e^{-k(s-a)} \int_0^{+\infty} e^{-k\t} \t^{\a_i -1} \; d\t 
= \dfrac{e^{-k(s-a)}}{k^{\a_i}}  \int_0^{+\infty} e^{-u} u^{\a_i -1} \; du =  \dfrac{e^{-k(s-a)}}{k^{\a_i}} \Gamma (\a_i) .
\end{multline*}
Finally, we have proved that $F$ is a $\ell$-contraction map. It follows from the classical Banach fixed point theorem that $F$ has a unique fixed point.

\bigskip

Now let us prove Theorem~\ref{thm1RL} in the case where $I_f = [a,b)$ with $b > a$ and in the case where $I_f = [a,+\infty)$. In both cases, one can easily write $I_f = \cup_{p \in \N} I_p$ where $I_p := [a,b_p]$ and $(b_p)_p \subset I_f$ is an increasing sequence with $b_0 > a$. Let us denote by $f_p$ the restriction $f_{|I_p}$ of $f$. From the previous case and for any $p \in \N$, there exists a unique (global) solution $q_p : I_p \to \R^m$ to the restricted Cauchy problem~\eqref{eqcauchyproblemRLrestrictedn} given by
\begin{equation}\label{eqcauchyproblemRLrestrictedn}\tag{VCP${}_{p}$}
\left\lbrace \begin{array}{l}
\Dm [q](t) = f_p(q(t),t),  \\
\I^{1-\a}_{a+} [q](a) = q_a.
\end{array}
\right.
\end{equation}
From Lemma~\ref{lem1} and since ${f_{p+1}}_{|I_p} = f_p$, it clearly follows from the uniqueness of $q_p$ that $q_{p+1} = q_p$ almost everywhere on $I_p$. As a consequence, we can correctly define $q : I_f \to \R^m$ by $q(t) := q_p (t)$ if $t \in I_p$. Our aim is now to prove that $q$ is a (global) solution of~\eqref{eqcauchyproblemRL}. For any $b \in I_f$, there exists $p \in \N$ such that $[a,b] \subset I_p$ and then $q = q_p$ almost everywhere on $[a,b]$. As a consequence, one can easily conclude that $q \in \L^1_\loc (I_f,\R^m)$ and $\I^{1-\a}_{a+} [q] \in \AC_\loc (I_f,\R^m)$, that is $q \in \AC^\a_{a+} (I_f,\R^m)$, and $\I^{1-\a}_{a+}[q](a)=q_a$ and $\Dm [q] = f(q,\cdot)$ almost everywhere on $I_f$. Hence $q$ is a (global) solution of~\eqref{eqcauchyproblemRL}. By contradiction, let us assume that $q$ is not unique. Let $Q$ be another (global) solution of~\eqref{eqcauchyproblemRL}. From Lemma~\ref{lem1}, the restriction $Q_{|I_p}$ is then the unique (global) solution of~\eqref{eqcauchyproblemRLrestrictedn}, that is, $Q = q_p = q$ almost everywhere on $I_p$. Since this last equality is true for any $p \in \N$, we get that $Q = q$ almost everywhere on $I_f$ and the uniqueness is proved.

\section{Proofs of Section~\ref{sectionCauchyC}}\label{appsectionCauchyC}

\subsection{Proof of Proposition~\ref{prop1C}}
Since $f$ satisfies~\eqref{Hinfty}, note that $f(q,\cdot) \in \L^\infty_\loc (I,\R^m)$ for every couple $(q,I)$ such that $I \in \IImf$ and $q \in \C (I,\Omega)$. 

\smallskip

We first prove the necessary condition. Let $(q,I)$ be a local solution of~\eqref{eqcauchyproblemC}. Then $I \in \IImf$ and $q \in {}_\c \AC^\a_{a+}(I,\Omega) \subset \C(I,\Omega)$. Since $\I^{1-\a}_{a+} [q-q(a)] \in \AC_\loc(I,\R^m)$, it holds that
\begin{multline*}
\I^{1-\a}_{a+} [q-q(a)] =  \I^{1-\a}_{a+} [q-q(a)](a) + \I^1_{a+} \left[ \dfrac{d}{dt} \Big[  \I^{1-\a}_{a+} [q-q(a)] \Big] \right] \\
 =  \I^{1-\a}_{a+} [q-q(a)](a) + \I^1_{a+} [ \CDm [q] ]  =  \I^{1-\a}_{a+} [q-q(a)](a) + \I^1_{a+} [f(q,\cdot)] ,
\end{multline*}
everywhere on $I$. From Proposition~\ref{prop3} and since $q-q(a) \in \L^\infty_\loc(I,\R^m)$, it holds that $\I^{1-\a}_{a+} [q-q(a)](a) = 0$, even if $\a_i = 1$ for some $i =1,\ldots,n$. Finally we have proved that
$$ \I^{1-\a}_{a+} [q-q(a)] =  \I^1_{a+} [f(q,\cdot)] ,$$
everywhere on $I$. Since $q-q(a)$ and $f(q,\cdot) \in \L^\infty_\loc(I,\R^m)$, we obtain from Proposition~\ref{prop5} that
$$ \I^1_{a+} [q-q(a)] = \I^1_{a+} \Big[ \Im [f(q,\cdot) ] \Big], $$
everywhere on $I$. Since $f(q,\cdot) \in \L^\infty_\loc(I,\R^m)$, we have $\Im [f(q,\cdot) ] \in \C (I,\R^m)$ from Proposition~\ref{prop3}. Since $q-q(a)$ and $ \Im [f(q,\cdot)] \in \C(I,\R^m)$, differentiating the previous equality leads to
$$ q-q(a) = \Im [f(q,\cdot) ] ,$$
everywhere on $I$. 

\smallskip

Now let us prove the sufficient condition. Let us assume that $I \in \IImf$, $q \in \C(I,\Omega)$ and
$$ q(t) = q_a + \Im [f(q,\cdot)] (t) , $$
for every $t \in I$. Since $f(q,\cdot) \in \L^\infty_\loc(I,\R^m)$, we have $\Im [f(q,\cdot) ] \in \C_a (I,\R^m)$ from Proposition~\ref{prop3} and thus $q(a) = q_a$. Moreover, since $q-q(a)$ and $f(q,\cdot) \in \L^\infty_\loc(I,\R^m)$, we obtain from Proposition~\ref{prop5} that
$$ \I^{1-\a}_{a+} [q-q(a)] = \I^1_{a+} [f(q,\cdot)] , $$
everywhere on $I$. Since $f(q,\cdot) \in \L^\infty_\loc(I,\R^m)$, it clearly follows that $q \in {}_\c \AC^\a_\loc(I,\Omega)$ and
$$ \CDm [q] = \dfrac{d}{dt} \Big[ \I^{1-\a}_{a+} [q-q(a)] \Big] = f(q,\cdot) , $$
almost everywhere on $I$. We conclude that $(q,I)$ is a local solution of~\eqref{eqcauchyproblemC}.

\subsection{Proof of Theorem~\ref{thm1}}
The proof of Theorem~\ref{thm1} easily follows from the three following propositions.

\begin{proposition}
Every local solution of~\eqref{eqcauchyproblemC} can be extended to a maximal solution.
\end{proposition}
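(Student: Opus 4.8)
The statement to prove is that every local solution of~\eqref{eqcauchyproblemC} can be extended to a maximal solution. This is the standard Zorn's lemma argument from the classical Cauchy-Lipschitz theory, adapted to the fractional multi-order setting.

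\medskip

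The plan is to fix a local solution $(q_0,I_0)$ of~\eqref{eqcauchyproblemC} and to consider the set $\mathcal{E}$ of all extensions of $(q_0,I_0)$, partially ordered by the extension relation: we declare $(q_1,I_1) \preceq (q_2,I_2)$ whenever $I_1 \subset I_2$ and $q_2 = q_1$ on $I_1$. Since $(q_0,I_0)$ itself belongs to $\mathcal{E}$, this set is nonempty, and one checks readily that $\preceq$ is a partial order on $\mathcal{E}$. The goal is then to apply Zorn's lemma: a maximal element of $\mathcal{E}$ is precisely a maximal solution extending $(q_0,I_0)$.

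\medskip

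First I would verify the hypothesis of Zorn's lemma by showing that every totally ordered chain $\{(q_\lambda, I_\lambda)\}_{\lambda \in \Lambda}$ in $\mathcal{E}$ admits an upper bound in $\mathcal{E}$. The natural candidate is obtained by gluing: set $I_* := \bigcup_{\lambda} I_\lambda$ and define $q_* : I_* \to \Omega$ by $q_*(t) := q_\lambda(t)$ whenever $t \in I_\lambda$. The compatibility condition $q_\mu = q_\lambda$ on $I_\lambda$ for $I_\lambda \subset I_\mu$, guaranteed by the total ordering, makes $q_*$ well defined. I would then check that $I_* \in \IImf$ (it is an interval containing $\{a\}$ as a proper subset, contained in $I_f$, since it is an increasing union of such intervals) and that $(q_*,I_*)$ is itself a local solution. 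This last point uses the integral representation of Proposition~\ref{prop1C}: for any $t \in I_*$ there is some $\lambda$ with $t \in I_\lambda$, and the identity $q_*(t) = q_a + \Im[f(q_*,\cdot)](t)$ holds because on $[a,t] \subset I_\lambda$ the function $q_*$ agrees with the local solution $q_\lambda$, for which the integral equation is satisfied. Continuity of $q_*$ on $I_*$ follows similarly, since continuity is a local property and $q_*$ coincides with a continuous $q_\lambda$ on a neighborhood of each interior point.

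\medskip

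The main obstacle I anticipate is the set-theoretic delicacy of the gluing when the chain is uncountable: one must ensure that $q_*$ is genuinely well defined and that membership $t \in I_*$ always places $t$ in some $I_\lambda$ on which the integral equation is available, rather than requiring a single $\lambda$ to capture the whole interval $[a,t]$. Because the integral representation of Proposition~\ref{prop1C} only involves values of $q$ on $[a,t]$ and the chain is totally ordered, the intervals containing $t$ are nested and the largest relevant one already contains $[a,t]$, which resolves the difficulty. Once the upper bound $(q_*,I_*)$ is shown to lie in $\mathcal{E}$ and to dominate every chain element, Zorn's lemma furnishes a maximal element $(q,I)$ of $\mathcal{E}$; by construction $(q,I)$ extends $(q_0,I_0)$ and, being maximal for $\preceq$ among extensions, it is a maximal solution in the sense of the definition, which completes the argument.
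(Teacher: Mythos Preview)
Your approach is essentially identical to the paper's: both fix a local solution, consider the set of its extensions ordered by the extension relation, and apply Zorn's lemma by gluing a chain into a candidate upper bound $(q_*,I_*)$.

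One small point worth noting: to verify that $(q_*,I_*)$ is a local solution, you invoke the integral representation of Proposition~\ref{prop1C}, which requires hypothesis~\eqref{Hinfty}. The proposition as stated carries no hypothesis on $f$, and the paper accordingly checks that the glued function is a solution directly from the definition of ${}_\c\AC^\a_{a+}$ (as in the end of the proof of Theorem~\ref{thm1RL}): for any compact $[a,b]\subset I_*$ one has $[a,b]\subset I_\lambda$ for some $\lambda$, so $q_*=q_\lambda$ there, whence continuity, $\I^{1-\a}_{a+}[q_*-q_a]\in\AC_\loc$, and $\CDm[q_*]=f(q_*,\cdot)$ a.e.\ follow immediately. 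This avoids the extra assumption. In the context of Theorem~\ref{thm1}, where \eqref{Hinfty} is in force anyway, your route is perfectly fine.
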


\begin{proof}
Let $(q,I)$ be a local solution of~\eqref{eqcauchyproblemC}. Let $\mathscr{F}$ be the nonempty set of all extensions of $(q,I)$ ordered by
\begin{equation*}
(q_1,I_1) \leq (q_2,I_2) \text{ if and only if } (q_2,I_2) \text{ is an extension of }  (q_1,I_1).
\end{equation*}
Our aim is to prove that $\mathscr{F}$ admits a maximal element. From the classical Zorn lemma, it is sufficient to prove that $\mathscr{F}$ is inductive. Let $\mathscr{G} = \{ (q_p,I_p) \}_{p \in \mathscr{P}}$ be a nonempty totally ordered subset of $\mathscr{F}$. Let us prove that $\mathscr{G}$ admits an upper bound in $\mathscr{F}$. Let us define $I' := \cup_{p \in \mathscr{P}} I_p$. Clearly $I' \in \IImf$. For every $t \in I'$, there exists $p \in \mathscr{P}$ such that $t \in I_p$ and, since $\mathscr{G}$ is totally ordered, if $t \in I_{p_1} \cap I_{p_2}$ then $q_{p_1} (t) = q_{p_2} (t)$. Consequently, we can (correctly) define $q' : I' \to \Omega$ by $q'(t) := q_p (t) \in \Omega$ if $ t \in I_p$. Similarly to the end of the proof of Theorem~\ref{thm1RL}, one can easily prove that $(q',I')$ is a local solution of~\eqref{eqcauchyproblemC}. Moreover $(q',I')$ extends $(q,I)$. As a consequence $(q',I') \in \mathscr{F}$ and is clearly an upper bound of $\mathscr{G}$. The proof is complete.
\end{proof}

\begin{proposition}\label{propdebase}
If $f$ satisfies \eqref{Hinfty} and \eqref{Hloclip}, then \eqref{eqcauchyproblemC} has a local solution.
\end{proposition}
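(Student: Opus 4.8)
The plan is to reduce the existence of a local solution to a fixed-point problem through the integral representation of Proposition~\ref{prop1C}, and then to solve that fixed-point problem on a sufficiently small interval by the Banach fixed point theorem. Since $q_a \in \Omega$ and $\Omega$ is open, I first apply~\eqref{Hloclip} at the point $(q_a,a) \in \Omega \times I_f$ to obtain $R > 0$, $\delta > 0$ and $L \geq 0$ such that $\BB_m(q_a,R) \subset \Omega$ and
$$ \vert f(x_2,\t) - f(x_1,\t) \vert_m \leq L \vert x_2 - x_1 \vert_m $$
for all $x_1, x_2 \in \BB_m(q_a,R)$ and almost every $\t \in [a,a+\delta] \cap I_f$. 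Fixing any $b_0 \in I_f$ with $a < b_0 \leq a+\delta$, the compact ball $\BB_m(q_a,R) \in \KK$ together with the subinterval $[a,b_0] \subset I_f$ allow me to apply~\eqref{Hinfty} to produce $M \geq 0$ with $\vert f(x,\t) \vert_m \leq M$ for every $x \in \BB_m(q_a,R)$ and almost every $\t \in [a,b_0]$.

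Next I work, for $b \in (a,b_0]$ to be fixed, on the complete metric space
$$ X := \{ q \in \C([a,b],\R^m) : q(t) \in \BB_m(q_a,R) \text{ for all } t \in [a,b] \}, $$
which is a closed subset of the Banach space $(\C([a,b],\R^m), \Vert \cdot \Vert_\infty)$. On $X$ I consider the map $F$ given by $F(q) := q_a + \Im[f(q,\cdot)]$. It is well defined and $X$-valued candidate: since $f$ is Carath\'eodory and $q$ continuous with values in $\BB_m(q_a,R)$, the function $f(q,\cdot)$ lies in $\L^\infty([a,b],\R^m)$ by~\eqref{Hinfty}, and Proposition~\ref{prop3} guarantees that $\Im[f(q,\cdot)]$ is then continuous, so $F(q) \in \C([a,b],\R^m)$. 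The two estimates I need both rest on the elementary computation $\frac{1}{\Gamma(\a_i)}\int_a^t (t-\t)^{\a_i-1}\,d\t = \frac{(t-a)^{\a_i}}{\Gamma(\a_i+1)}$, finite precisely because the kernel singularity is integrable for $\a_i > 0$. For the self-mapping property, the bound~\eqref{Hinfty} gives, for every $q \in X$ and $t \in [a,b]$,
$$ \vert F(q)(t) - q_a \vert_m \leq M \left( \sum_{i=1}^m \left( \frac{(b-a)^{\a_i}}{\Gamma(\a_i+1)} \right)^2 \right)^{1/2} ; $$
for the contraction property, the Lipschitz estimate~\eqref{Hloclip} gives, for $q_1, q_2 \in X$,
$$ \Vert F(q_2) - F(q_1) \Vert_\infty \leq L \left( \sum_{i=1}^m \left( \frac{(b-a)^{\a_i}}{\Gamma(\a_i+1)} \right)^2 \right)^{1/2} \Vert q_2 - q_1 \Vert_\infty . $$

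The coefficients on the right-hand sides tend to $0$ as $b \to a^+$, so after shrinking $b$ if necessary I can arrange simultaneously that $F(X) \subset X$ and that $F$ is a contraction on $X$. The Banach fixed point theorem then yields a unique $q \in X$ with $F(q) = q$; as $q$ is continuous with values in $\BB_m(q_a,R) \subset \Omega$, we have $q \in \C([a,b],\Omega)$ with $[a,b] \in \IImf$, and $q$ satisfies the integral equation of Proposition~\ref{prop1C}, so $(q,[a,b])$ is a local solution of~\eqref{eqcauchyproblemC}. The main obstacle, as is typical for local existence under a merely \emph{local} Lipschitz condition, is the self-mapping property: one must keep every value of the iterate inside the ball $\BB_m(q_a,R)$ on which~\eqref{Hloclip} was extracted, which is exactly what forces $b-a$ to be small and what renders the boundedness hypothesis~\eqref{Hinfty} indispensable here (it both controls $\vert F(q)(t)-q_a\vert_m$ and secures $f(q,\cdot)\in\L^\infty$, hence the continuity of $F(q)$ via Proposition~\ref{prop3}).
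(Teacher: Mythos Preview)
Your proof is correct and follows essentially the same approach as the paper: apply \eqref{Hloclip} at $(q_a,a)$ to obtain a ball and a local Lipschitz constant, invoke \eqref{Hinfty} on that ball, set up the integral operator $F(q)=q_a+\Im[f(q,\cdot)]$ on continuous functions with values in $\BB_m(q_a,R)$, shrink the interval so that $F$ is a self-map and a contraction, and conclude via Banach's fixed point theorem and Proposition~\ref{prop1C}. The only cosmetic difference is that you bound $\vert F(q)(t)-q_a\vert_m$ and the contraction constant using the $\ell^2$-type quantity $\bigl(\sum_i ((b-a)^{\a_i}/\Gamma(\a_i+1))^2\bigr)^{1/2}$, whereas the paper uses the $\ell^1$-type sum $\sum_i (b-a)^{\a_i}/\Gamma(\a_i+1)$; both vanish as $b\to a^+$, so either works.
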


\begin{proof}
Let $R$, $\delta$ and $L$ be associated with $(q_a,a) \in \Omega \times I_f$ in \eqref{Hloclip}. We assume that $\delta$ is sufficiently small in order to have $[a,a+\delta] \subset I_f$. Let $M$ be associated with $\BB_m(q_a,R) \in \KK$ and $[a,a+\delta]$ in \eqref{Hinfty}. Consider $0 < \varepsilon \leq \delta$ sufficiently small in order to have $ M \sum_{i=1}^m \frac{\varepsilon^{\a_i}}{\Gamma (1+\a_i)} \leq R$ and $\ell := L \sum_{i=1}^m \frac{\varepsilon^{\a_i}}{\Gamma (1+\a_i)} < 1$. Then we construct the $\ell$-contraction map given by
\begin{equation*}
\fonction{F}{\C ([a,a+\varepsilon],\BB_m(q_a,R))}{\C ([a,a+\varepsilon],\BB_m(q_a,R))}{y}{F(y),}
\end{equation*}
with
$$
\fonction{F(y)}{[a,a+\varepsilon]}{\BB_m(q_a,R)}{t}{q_a + \Im [ f(y,\cdot) ](t).}
$$
Indeed, from~\eqref{Hinfty} and Proposition~\ref{prop3}, we infer that $F(y) \in \C([a,a+\varepsilon],\R^m)$ for every $y \in \C ([a,a+\varepsilon],\BB_m(q_a,R))$. From~\eqref{Hinfty}, we claim that $\vert F(y)(t) - q_a \vert_m \leq R$ for every $y \in \C ([a,a+\varepsilon],\BB_m(q_a,R))$ and every $t \in [a,a+\varepsilon]$. Finally, from~\eqref{Hloclip}, we infer that $\Vert F(y_2) - F(y_1) \Vert_\infty \leq \ell \Vert y_2 - y_1 \Vert_\infty$ for every $y_1$, $y_2 \in \C ([a,a+\varepsilon],\BB_m(q_a,R))$. It follows from the classical Banach fixed point theorem that $F$ has a unique fixed point denoted by $q$. It follows from Proposition~\ref{prop1C} that $(q,[a,a+\varepsilon])$ is a local solution of \eqref{eqcauchyproblemC}. 
\end{proof}

\begin{proposition}
We assume that $f$ satisfies \eqref{Hinfty} and \eqref{Hloclip}. Let $(q,I)$ and $(q',I')$ be two local solutions of \eqref{eqcauchyproblemC}. If $I \subset I'$, then $(q',I')$ is an extension of $(q,I)$.
\end{proposition}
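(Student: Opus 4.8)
The plan is to establish that $q = q'$ on the whole of $I$; since $(q',I')$ is already a local solution with $I \subset I'$, this equality is exactly what is missing to conclude that $(q',I')$ extends $(q,I)$. The argument combines a connectedness (continuation) argument on $I$ with a local uniqueness estimate based on the integral representation of Proposition~\ref{prop1C}. First I would introduce
$$ c := \sup \big\{ t \in I : q(s) = q'(s) \text{ for all } s \in [a,t] \big\} . $$
Both solutions satisfy $q(a) = q'(a) = q_a$ and are continuous, so this set is nonempty and $q = q'$ on $[a,c)$; by continuity $q = q'$ on $[a,c]$ whenever $c \in I$. The goal reduces to showing $c = \sup I$, since then (examining separately the three possible shapes $I = [a,+\infty)$, $I = [a,b)$ and $I = [a,b]$) continuity forces $q = q'$ on all of $I$.

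Next I would argue by contradiction, assuming $c < \sup I$, so that $c \in I$ and $[a,c+\varepsilon] \subset I$ for all small $\varepsilon > 0$. Applying \eqref{Hloclip} at the point $(q(c),c) \in \Omega \times I_f$ provides $R$, $\delta$, $L$ with $\BB_m(q(c),R) \subset \Omega$ together with the associated Lipschitz estimate on that ball; since $q(c) = q'(c)$ and $q$, $q'$ are continuous, I may shrink $\varepsilon \le \delta$ so that $q(\t), q'(\t) \in \BB_m(q(c),R)$ for every $\t \in [c,c+\varepsilon]$. The decisive step then uses Proposition~\ref{prop1C}: subtracting the two integral representations and exploiting that $q = q'$ on $[a,c]$, the integrand vanishes on $[a,c]$, so for $t \in [c,c+\varepsilon]$
$$ q(t) - q'(t) = \int_c^t \left[ \dfrac{1}{\Gamma(\a)} (t-\t)^{\a-1} \right] \otimes \big( f(q(\t),\t) - f(q'(\t),\t) \big) \; d\t . $$
This localization at $c$ is the crux of the matter: even though the fractional operator carries memory and cannot simply be restarted at $c$, the coincidence of the two solutions on $[a,c]$ cancels that memory and reduces the problem to a short interval.

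Finally I would estimate, applying \eqref{Hloclip} componentwise,
$$ \vert q(t) - q'(t) \vert_m \le L \sum_{i=1}^m \dfrac{1}{\Gamma(\a_i)} \int_c^t (t-\t)^{\a_i-1} \vert q(\t) - q'(\t) \vert_m \; d\t , $$
for $t \in [c,c+\varepsilon]$. Writing $N := \sup_{t \in [c,c+\varepsilon]} \vert q(t) - q'(t) \vert_m$ and using $\int_c^t (t-\t)^{\a_i-1} \, d\t = (t-c)^{\a_i}/\a_i \le \varepsilon^{\a_i}/\a_i$, this yields $N \le \ell\, N$ with $\ell := L \sum_{i=1}^m \frac{\varepsilon^{\a_i}}{\Gamma(1+\a_i)}$, which is exactly the contraction constant already used in Proposition~\ref{propdebase}. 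Choosing $\varepsilon$ small enough that $\ell < 1$ forces $N = 0$, that is $q = q'$ on $[c,c+\varepsilon]$, contradicting the definition of $c$. Hence $c = \sup I$ and $q = q'$ on $I$, which completes the proof. The main obstacle is precisely handling the memory of the fractional integral: one must verify that the integrals starting at $a$ genuinely collapse to integrals starting at $c$ so that the singular kernel $(t-\t)^{\a_i-1}$ can be controlled on a short interval — this is the one point where the fractional case departs from the classical $\a = 1$ situation.
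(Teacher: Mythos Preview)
Your proof is correct and follows essentially the same route as the paper: define the supremum of the coincidence set, assume by contradiction it lies strictly inside $I$, invoke \eqref{Hloclip} at that point, observe that the memory in the integral representation cancels on $[a,c]$ because $q = q'$ there, and use the resulting short-interval estimate with contraction constant $\ell = L \sum_{i=1}^m \frac{\varepsilon^{\a_i}}{\Gamma(1+\a_i)} < 1$ to force $q = q'$ on $[c,c+\varepsilon]$. The only difference is cosmetic: the paper packages the final step as a Banach fixed-point argument (introducing the auxiliary function $z(t) = q_a + \int_a^b [\cdot] \otimes f(q(\t),\t)\,d\t$ and a map $F$ on $\C([b,b+\varepsilon],\BB_m(q(b),R))$ of which both $q$ and $q'$ are fixed points), whereas you estimate the difference $q - q'$ directly and obtain $N \le \ell N$; these are the same computation.
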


\begin{proof}
By contradiction let us assume that $ A :=  \{ t \in I, \; q' (t) \neq q (t) \} $ is not empty and let us consider $b := \inf A \in I$. Necessarily it holds that $q' = q$ on $[a,b]$ and $b < \sup I$. Let $R$, $\delta$ and $L$ be associated with $(q(b),b) \in \Omega \times I_f$ in \eqref{Hloclip}. We assume that $\delta$ is sufficiently small in order to have $[b,b+\delta] \subset I \subset I_f$. We introduce $z \in \C ([b,b+\delta],\R^m)$ given by
$$ \forall t \in [b,b+\delta], \quad z(t) := q_a +  \int_{a}^{b} \left[ \dfrac{1}{\Gamma (\alpha )} (t-\t)^{\alpha - 1} \right] \otimes f(q(\t),\t) \; d\t. $$
The continuity of $z$ can be proved from the classical Lebesgue dominated convergence theorem. Also note that $z(b) = q(b)$. Let $M$ be associated with $\BB_m(q(b),R) \in \KK$ and $[b,b+\delta]$ in \eqref{Hinfty}. Consider $0 < \varepsilon \leq \delta$ sufficiently small in order to have $ \ell := L \sum_{i=1}^m \frac{\varepsilon^{\a_i}}{\Gamma (1+\a_i)} < 1$ and
$$ \vert z(t) - q(b) \vert_m + M \sum_{i=1}^m \frac{\varepsilon^{\a_i}}{\Gamma (1+\a_i)} \leq R, \quad  \vert q(t) - q(b) \vert_m \leq R, \quad  \vert q' (t) - q(b) \vert_m \leq R , $$
for every $t \in [b,b+\varepsilon]$. Finally, as in the proof of Proposition~\ref{propdebase}, we consider the $\ell$-contraction map given by
\begin{equation*}
\fonction{F}{\C ([b,b+\varepsilon],\BB_m(q(b),R))}{\C ([b,b+\varepsilon],\BB_m(q(b),R))}{y}{F(y),}
\end{equation*}
with
$$
\fonction{F(y)}{[b,b+\varepsilon]}{\BB_m(q(b),R)}{t}{z(t) +  \di \int_{b}^t \left[ \dfrac{1}{\Gamma (\a) } (t-\t)^{\a -1} \right] \otimes f(y(\t),\t) \; d\t .}
$$
It follows from the classical Banach fixed point theorem that $F$ has a unique fixed point. Since $(q,I)$ and $(q',I')$ are local solutions of~\eqref{eqcauchyproblemC} and since $q = q'$ on $[a,b]$, one can easily prove that $q$ and $q'$ are fixed points of $F$. We conclude that $q' = q$ on $[b,b+\varepsilon]$ and then on $[a,b+\varepsilon]$. This raises a contradiction with the definition of $b$. Consequently $A$ is empty and the proof is complete.
\end{proof}

\subsection{Proof of Theorem~\ref{thm3}}
We first need to state the following lemma.

\begin{lemma}\label{lem2C}
Let $b > a$ and $k \in \N$. The Bielecki norm defined on $\C([a,b],\R^m)$ by
$$ \Vert q \Vert_{\infty,k} := \max_{t \in [a,b]} \vert e^{-k(t-a)} q(t) \vert_m , $$
is equivalent to the classical norm $\Vert \cdot \Vert_\infty$. In particular, $\C([a,b],\R^m)$ endowed with the Bielecki norm $\Vert \cdot \Vert_{\infty,k}$ is complete.
\end{lemma}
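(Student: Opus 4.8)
The plan is to prove Lemma~\ref{lem2C} by establishing the two-sided norm comparison directly, since equivalence of norms on a vector space immediately transfers completeness. First I would note that for every $t \in [a,b]$ the exponential weight $e^{-k(t-a)}$ lies in the interval $[e^{-k(b-a)}, 1]$, because $0 \leq t-a \leq b-a$ and $k \geq 0$. This gives the pointwise bounds
$$ e^{-k(b-a)} \vert q(t) \vert_m \leq \vert e^{-k(t-a)} q(t) \vert_m \leq \vert q(t) \vert_m , $$
valid for every $t \in [a,b]$.

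Next I would take the maximum over $t \in [a,b]$ on each side. The right-hand inequality yields $\Vert q \Vert_{\infty,k} \leq \Vert q \Vert_\infty$ directly. For the left-hand inequality, taking the supremum of $e^{-k(b-a)}\vert q(t)\vert_m$ gives $e^{-k(b-a)} \Vert q \Vert_\infty \leq \Vert q \Vert_{\infty,k}$, hence $\Vert q \Vert_\infty \leq e^{k(b-a)} \Vert q \Vert_{\infty,k}$. Combining the two, I obtain
$$ e^{-k(b-a)} \Vert q \Vert_\infty \leq \Vert q \Vert_{\infty,k} \leq \Vert q \Vert_\infty , $$
for every $q \in \C([a,b],\R^m)$, which is exactly the statement that the Bielecki norm $\Vert \cdot \Vert_{\infty,k}$ is equivalent to the classical uniform norm $\Vert \cdot \Vert_\infty$. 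This mirrors exactly the computation already carried out for the $\L^1$-version in the proof of Lemma~\ref{lem2}.

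Finally, for the completeness assertion I would invoke the standard fact that the space $\C([a,b],\R^m)$ endowed with the uniform norm $\Vert \cdot \Vert_\infty$ is a Banach space, and that completeness is preserved under passage to an equivalent norm (two equivalent norms induce the same Cauchy sequences and the same convergent sequences). Hence $\C([a,b],\R^m)$ is also complete for $\Vert \cdot \Vert_{\infty,k}$. I do not anticipate any genuine obstacle here: the only thing to be careful about is the direction of the inequalities and the precise location of the weight, but the argument is entirely elementary and structurally identical to Lemma~\ref{lem2}. The sole reason the lemma is stated at all is that the Bielecki norm will be the natural norm for a contraction-mapping argument on a noncompact or long interval in the proof of Theorem~\ref{thm3}, exactly as in the R-L case.
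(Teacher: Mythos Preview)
Your proof is correct and follows exactly the same approach as the paper: the paper simply records the chain $\Vert q \Vert_{\infty,k} \leq \Vert q \Vert_{\infty} \leq e^{k(b-a)} \Vert q \Vert_{\infty,k}$, which is precisely the two-sided comparison you derive from the pointwise bounds on $e^{-k(t-a)}$. Your write-up is slightly more detailed but substantively identical.
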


\begin{proof}
Indeed one can easily prove that
$$ \Vert q \Vert_{\infty,k} \leq \Vert q \Vert_{\infty} \leq e^{k(b-a)} \Vert q \Vert_{\infty,k} , $$
for every $q \in \C([a,b],\R^m)$.
\end{proof}

Now let us prove Theorem~\ref{thm3}. Since $f$ satisfies~\eqref{HgloblipC}, $f$ satisfies~\eqref{Hloclip}. From Theorem~\ref{thm1}, since $f$ also satisfies~\eqref{Hinfty}, \eqref{eqcauchyproblemC} admits a unique maximal solution denoted by $(q,I)$ and $(q,I)$ is the maximal extension of any other local solution of~\eqref{eqcauchyproblemC}. In order to prove that $(q,I)$ is global, it is then sufficient to prove that~\eqref{eqcauchyproblemC} admits a local solution $(Q,[a,b])$ for every $b \in I_f$, $b > a$.

\smallskip

Let $b \in I_f$ with $b > a$. Let $L$ be associated with $[a,b] \subset I_f$ in \eqref{HgloblipRL}. We endow $\C([a,b],\R^m)$ with the Bielecki norm $\Vert \cdot \Vert_{\infty,k}$ provided in Lemma~\ref{lem2C} with $k \in \N^*$ sufficiently large in order to have $\ell := L \sum_{i=1}^m \frac{1}{k^{\a_i}} < 1$. Then we consider 
\begin{equation*}
\fonction{F}{\C ([a,b],\R^m)}{\C ([a,b],\R^m)}{y}{F(y),}
\end{equation*}
with
$$
\fonction{F(y)}{[a,b]}{\R^m}{t}{q_a + \Im [f(y,\cdot)](t) .}
$$
Let $y_1$, $y_2 \in \C([a,b],\R^m)$. From~\eqref{HgloblipC} it holds that
\begin{eqnarray*}
\vert e^{-k(t-a)} (F(y_2)-F(y_1) )(t) \vert_m & = & \vert e^{-k(t-a)} \Im [ f(y_2,\cdot) - f(y_1,\cdot) ](t) \vert_m \\
& = & \left\vert e^{-k(t-a)} \int_a^t \left[ \dfrac{1}{\Gamma(\a)} (t-\t)^{\a-1} \right] \otimes (f(y_2(\t),\t) - f(y_1(\t),\t) ) \; d\t \right\vert_m \\
& \leq & \sum_{i=1}^m \left\vert  \dfrac{e^{-k(t-a)}}{\Gamma(\a_i)} \int_a^t  (t-\t)^{\a_i -1} (f_i(y_2(\t),\t) - f_i(y_1(\t),\t) ) \; d\t \right\vert \\
& \leq & \sum_{i=1}^m \dfrac{e^{-k(t-a)}}{\Gamma(\a_i)} \int_a^t  (t-\t)^{\a_i -1} \vert f_i(y_2(\t),\t) - f_i(y_1(\t),\t) ) \vert \; d\t \\
& \leq & L \sum_{i=1}^m \dfrac{e^{-k(t-a)}}{\Gamma(\a_i)} \int_a^t  (t-\t)^{\a_i -1} \vert y_2(\t) - y_1(\t) \vert_m \; d\t ,
\end{eqnarray*}
for every $t \in [a,b]$. On the other hand, note that $\vert y_2(\t) - y_1(\t) \vert_m \leq e^{k(\t-a)} \Vert y_2 - y_1 \Vert_{\infty,k}$ for every $\t \in [a,b]$. As a consequence, it holds that
$$ \vert e^{-k(t-a)} (F(y_2)-F(y_1) )(t) \vert_m \leq L \Vert y_2 - y_1 \Vert_{\infty,k} \sum_{i=1}^m \dfrac{1}{\Gamma(\a_i)} \int_a^t  (t-\t)^{\a_i -1} e^{-k(t-\t)} \; d\t , $$
for every $t \in [a,b]$. Similarly to the proof of Theorem~\ref{thm1RL}, one can easily prove that 
$$ \int_a^t  (t-\t)^{\a_i -1} e^{-k(t-\t)} \; d\t \leq \frac{\Gamma(\a_i)}{k^{\a_i}} .$$
Finally, we have proved that $F$ is a $\ell$-contraction map. It follows from the classical Banach fixed point theorem that $F$ has a unique fixed point denoted by $Q$. The proof is complete.

\subsection{Proof of Theorem~\ref{thm2}}

Theorem~\ref{thm2} corresponds to the last proposition of this section.

\begin{lemma}\label{appCprop21-6}
We assume that $f$ satisfies \eqref{Hinfty} and \eqref{Hloclip}. Let $(q,I)$ be the maximal solution of~\eqref{eqcauchyproblemC}. If $(q,I)$ is not global, then $I = [a,b)$ with $b \in I_f$, $b > a$. Moreover, $q$ cannot be continuously extended at $t=b$ with a $\Omega$-value.
\end{lemma}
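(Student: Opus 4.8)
The plan is to establish both assertions by contradiction with the maximality of $(q,I)$, treating the shape of $I$ first and then the endpoint behaviour.

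First I would pin down the form of $I$. Since $(q,I)$ is a local solution we have $I \in \IImf$, so $I$ is one of $[a,+\infty)$, $[a,b]$ or $[a,b)$. The case $I = [a,+\infty)$ is excluded because $I \subset I_f \subset [a,+\infty)$ would force $I = I_f$, i.e. $(q,I)$ global. The closed case $I = [a,b]$ is likewise excluded unless $[a,b] = I_f$ (again global): indeed $q \in \C([a,b],\Omega)$ gives $q(b) \in \Omega$, and running the fixed-point continuation argument at the point $(q(b),b)$ from the proof of Theorem~\ref{thm1} (the contraction built on the integral equation $y \mapsto z + \int_b^{\,\cdot}[\cdots]\otimes f(y,\cdot)$, where the tail $z$ absorbs the non-local history $\int_a^b$) produces a local solution on some $[b,b+\varepsilon]$ that glues with $q$ into a strict extension of $(q,I)$, contradicting maximality. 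Hence $I = [a,b)$, and a short case check on the form of $I_f$ then gives $b \in I_f$ (otherwise $I = [a,b)$ would coincide with a half-open $I_f$ and be global); since $\{a\} \varsubsetneq I$ we have $b > a$.

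For the second assertion I argue by contradiction, assuming that $\lim_{t \to b^-} q(t)$ exists and lies in $\Omega$. I define $\tilde{q}$ on $[a,b]$ by $\tilde{q} = q$ on $[a,b)$ and $\tilde{q}(b)$ equal to this limit, so that $\tilde{q} \in \C([a,b],\Omega)$ and $K := \tilde{q}([a,b])$ is a compact subset of $\Omega$. By \eqref{Hinfty} applied to $K$ and $[a,b] \subset I_f$ one gets $f(\tilde{q},\cdot) \in \L^\infty([a,b],\R^m)$, whence $\Im[f(\tilde{q},\cdot)]$ is continuous on $[a,b]$ by Proposition~\ref{prop3} (applied componentwise). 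Since $f(\tilde{q},\cdot) = f(q,\cdot)$ almost everywhere on $[a,b)$, the integral representation of Proposition~\ref{prop1C} yields $\tilde{q}(t) = q(t) = q_a + \Im[f(\tilde{q},\cdot)](t)$ for every $t \in [a,b)$. Both sides of this identity are continuous on all of $[a,b]$ and agree on the dense subset $[a,b)$, so they agree at $t = b$ too. Thus $\tilde{q}(t) = q_a + \Im[f(\tilde{q},\cdot)](t)$ for every $t \in [a,b]$, and Proposition~\ref{prop1C} shows that $(\tilde{q},[a,b])$ is a local solution of \eqref{eqcauchyproblemC}. As $[a,b] \varsupsetneq [a,b)$ and $\tilde{q}$ restricts to $q$, this is a strict extension of $(q,I)$, contradicting maximality.

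The delicate point is the passage to the closed endpoint $t = b$: because the fractional integral is non-local, continuity of $\Im[f(q,\cdot)]$ up to and including $b$ is not automatic and needs the integrand bounded on all of $[a,b]$. This is exactly where both hypotheses enter — \eqref{Hinfty} to bound $f$ along the trajectory once that trajectory is known to remain in a compact set, and the assumed $\Omega$-valued continuous extension to guarantee that this compact set $K$ actually lies in $\Omega$ so that \eqref{Hinfty} is applicable. Everything after that is a routine density-and-continuity argument combined with the integral-representation machinery already established.
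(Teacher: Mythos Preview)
Your proof is correct and follows essentially the same strategy as the paper's own argument: the closed-interval case is ruled out via the same fixed-point continuation at $(q(b),b)$ with a tail function $z$ carrying the history on $[a,b]$, and the non-extendability at $t=b$ is handled by extending $q$ continuously to $[a,b]$, using \eqref{Hinfty} to get $f(\tilde q,\cdot)\in\L^\infty([a,b],\R^m)$, and then passing the integral representation to $t=b$ by continuity. The only cosmetic difference is that the paper invokes Proposition~\ref{prop4} rather than Proposition~\ref{prop3} for continuity of $\Im[f(\tilde q,\cdot)]$ on the compact interval, but on $[a,b]$ the two statements coincide.
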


\begin{proof}
Let us prove the first part of Lemma~\ref{appCprop21-6}. Precisely, we prove here that if $I = [a,b]$, then $b = \max I_f$ (and thus $I = I_f$). By contradiction let us assume that $I = [a,b]$ with $b < \sup I_f$. Let $R$, $\delta$ and $L$ be associated with $(q(b),b) \in \Omega \times I_f$ in \eqref{Hloclip}. We assume that $\delta$ is sufficiently small in order to have $[b,b+\delta] \subset I_f$. We introduce $z \in \C ([b,b+\delta],\R^m)$ given by
$$ \forall t \in [b,b+\delta], \quad z(t) := q_a +  \int_{a}^{b} \left[ \dfrac{1}{\Gamma (\alpha )} (t-\t)^{\alpha - 1} \right] \otimes f(q(\t),\t) \; d\t. $$
The continuity of $z$ can be proved from the classical Lebesgue dominated convergence theorem. Also note that $z(b) = q(b)$. Let $M$ be associated with $\BB_m(q(b),R) \in \KK$ and $[b,b+\delta]$ in \eqref{Hinfty}. Consider $0 < \varepsilon \leq \delta$ sufficiently small in order to have $\ell := L \sum_{i=1}^m \frac{\varepsilon^{\a_i}}{\Gamma (1+\a_i)} < 1$ and
$$ \vert z(t) - q(b) \vert_m + M \sum_{i=1}^m \frac{\varepsilon^{\a_i}}{\Gamma (1+\a_i)} \leq R,  $$
for every $t \in [b,b+\varepsilon]$. Finally, as in the proof of Proposition~\ref{propdebase}, we introduce a $\ell$-contraction map given by
\begin{equation*}
\fonction{F}{\C ([b,b+\varepsilon],\BB_m(q(b),R))}{\C ([b,b+\varepsilon],\BB_m(q(b),R))}{y}{F(y),}
\end{equation*}
with
$$
\fonction{F(y)}{[b,b+\varepsilon]}{\BB_m(q(b),R)}{t}{z(t) + \di \int_{b}^t \left[ \dfrac{1}{\Gamma (\a) }  (t-\t)^{\a -1} \right] \otimes f(y(\t),\t) \; d\t .}
$$
It follows from the classical Banach fixed point theorem that $F$ has a unique fixed point denoted by $Q$. One can easily prove that $q' : [a,b+\varepsilon] \to \Omega$ defined by
$$ q'(t) := \left\lbrace \begin{array}{lcl}
q(t) & \text{if} & t \in [a,b], \\
Q(t) & \text{if} & t \in [b,b+\varepsilon],
\end{array} \right. $$
is a local solution of~\eqref{eqcauchyproblemC} and is an extension of $(q,I)$ with $I \varsubsetneq [a,b+\varepsilon]$. This raises a contradiction with the maximality of $(q,I)$ and the proof of the first part is complete. 

\smallskip

Let us prove the second part of Lemma~\ref{appCprop21-6}. By contradiction let us assume that $q$ can be continuously extended at $t=b$ with a value $\xi \in \Omega$, that is, $\lim_{t \to b, \; t < b} q(t) = \xi \in \Omega$. Let $q' : [a,b] \to \Omega$ be the continuous function defined by
\begin{equation*}
q' (t) := \left\lbrace \begin{array}{lcl}
q(t) & \text{if} & t \in [a,b), \\
\xi & \text{if}& t=b.
\end{array} \right.
\end{equation*}
Our aim is to prove that $(q',[a,b])$ is a local solution of~\eqref{eqcauchyproblemC}. Since $(q,[a,b))$ is a local solution of~\eqref{eqcauchyproblemC}, it holds that
$$
q'(t) = q(t) = q_a +  \Im [f(q,\cdot)] (t) = q_a + \Im [f(q',\cdot)] (t) ,
$$
for every $t \in [a,b)$. Since $f(q',\cdot) \in \L^\infty ([a,b],\R^m)$, we infer from Proposition~\ref{prop4} that $\Im [f(q',\cdot)] \in \C([a,b],\R^m)$. From continuity, the above equality also holds true at $t=b$. It follows that $(q',[a,b])$ is a local solution of~\eqref{eqcauchyproblemC} and is an extension of $(q,[a,b))$ with $[a,b) \varsubsetneq [a,b]$, raising a contradiction with the maximality of $(q,[a,b))$. The proof is complete.
\end{proof}

\begin{proposition}
We assume that $f$ satisfies \eqref{Hinfty} and \eqref{Hloclip}. Let $(q,I)$ be the maximal solution of~\eqref{eqcauchyproblemC}. If $(q,I)$ is not global, then $I = [a,b)$ with $b \in I_f $, $b >a$, and, for every $K \in \KK$, there exists $t\in I$ such that $q(t) \notin K$.
\end{proposition}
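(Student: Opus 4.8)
The plan is to notice that the first half of the statement costs nothing: the shape of $I$ is already furnished by the first part of Lemma~\ref{appCprop21-6}, which says that if the maximal solution $(q,I)$ is not global then $I = [a,b)$ for some $b \in I_f$ with $b > a$. All the real work therefore concerns the escape-from-compacts property, and I would prove it by contradiction, taking the \emph{second} part of Lemma~\ref{appCprop21-6} (that $q$ admits no continuous extension at $t=b$ with value in $\Omega$) as the target to contradict.

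So I would suppose, for contradiction, that there exists a compact $K \in \KK$ with $q(t) \in K$ for every $t \in I = [a,b)$. The point of this assumption is that it turns the integrand of the integral representation into a bounded function: by~\eqref{Hinfty}, applied to the compact $K$ and to the subinterval $[a,b] \subset I_f$, there is $M \geq 0$ such that $\vert f(x,t) \vert_m \leq M$ for all $x \in K$ and almost every $t \in [a,b]$. Since $q(t) \in K$ for every $t \in [a,b)$, this yields $\vert f(q(t),t) \vert_m \leq M$ for almost every $t \in [a,b)$, that is, $f(q,\cdot) \in \L^\infty([a,b],\R^m)$ (the single point $b$ being negligible).

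The next step is to pass to the limit $t \to b^-$. By Proposition~\ref{prop1C} one has $q(t) = q_a + \Im[f(q,\cdot)](t)$ for every $t \in [a,b)$. Applying Proposition~\ref{prop4} componentwise (each order $\a_i \in (0,1]$ and each scalar integrand $f_i(q,\cdot) \in \L^\infty([a,b],\R)$), the map $t \mapsto q_a + \Im[f(q,\cdot)](t)$ extends to a continuous function on the whole closed interval $[a,b]$. Hence $q$ coincides on $[a,b)$ with this continuous function and therefore admits a finite limit $\xi := q_a + \Im[f(q,\cdot)](b)$ as $t \to b^-$. Since $K$ is compact, hence closed, and $q(t) \in K$ for all $t \in [a,b)$, the limit satisfies $\xi \in K \subset \Omega$. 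Thus $q$ would be continuously extendable at $t=b$ with the $\Omega$-value $\xi$, contradicting the second part of Lemma~\ref{appCprop21-6}. This contradiction shows that no such compact $K$ exists, which is exactly the claimed property, and completes the proof.

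The whole argument is essentially routine once Lemma~\ref{appCprop21-6} is available; the one genuine subtlety I would emphasize is that $f(q,\cdot)$ is only \emph{bounded}, not continuous, on $[a,b)$, so the continuity of its fractional integral up to the endpoint $b$ must come from the $\L^\infty$-regularity statement Proposition~\ref{prop4} (which gives H\"olderian continuity of $\Im$ on the closed interval), and not from any continuity of the integrand itself. Accommodating the vector multi-order $\a = (\a_i)$ amounts to nothing more than invoking this scalar statement in each coordinate.
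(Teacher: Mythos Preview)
Your proof is correct and follows essentially the same route as the paper's: both argue by contradiction, use \eqref{Hinfty} to obtain $f(q,\cdot) \in \L^\infty$, apply Proposition~\ref{prop4} to get (H\"older) continuity of the fractional integral up to the endpoint $b$, and then invoke the second part of Lemma~\ref{appCprop21-6}. The only cosmetic difference is that the paper phrases the extension via uniform continuity of $q$ on $[a,b)$, whereas you extend the integral representation directly to the closed interval $[a,b]$.
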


\begin{proof}
The first part of this result is already proved in the first lemma of this section. By contradiction let us assume that there exists $K \in \KK $ such that $q (t) \in K$ for every $t \in [a,b)$. As a consequence, from~\eqref{Hinfty}, $f(q,\cdot) \in \L^\infty ([a,b),\R^m)$ and then $q \in \H^\a ([a,b),\R^m)$ from Proposition~\ref{prop4}. In particular, $q$ is uniformly continuous on $[a,b)$ and thus can be continuously extended at $t=b$ with a value $\xi \in \R^m$. Since $K$ is closed, we conclude that $\xi \in K \subset \Omega$. The proof is complete from the previous lemma.
\end{proof}

\section{Proofs of Section~\ref{sectransition}}\label{appsectransition}

\subsection{Proof of Lemma~\ref{lemimportant}}
Let us fix $b \in I$ with $b >a$. Since $A \in \L^\infty_\loc(I,\R^{m \times m})$, there exists $M \geq 0$ such that $\vert A_{ij} (\t) \vert \leq M$ for every $i$, $j \in \{ 1 , \ldots ,m \}$ and for almost every $\t \in [a,b]$.
Since $\a = (\a_{ij} ) \in (0,1]^{m \times m}$, we denote by $\beta := \min_{ij} \a_{ij} \in (0,1]$ and by $\gamma := \max_{ij} \a_{ij} \in (0,1]$. Finally, we denote by 
$$ \delta := \left\lbrace \begin{array}{lcr}
\beta & \text{if} & b-a < 1, \\
\gamma & \text{if} & b-a \geq 1.
\end{array} \right. $$
For every $i$, $j \in \{ 1 , \ldots ,m \}$, it follows from Definition~\ref{defRLstate} that
$$ 0 \leq \vert Z_{ij} (t,s) \vert \leq \dfrac{1}{\Gamma (\a_{ij})} (t-s)^{\a_{ij}-1} +M \sum_{k=1}^m \I^{\a_{ij}}_{s+} \big[ \; \vert Z_{kj}(\cdot,s) \vert \; \big](t) , $$
for almost every $a \leq s < t \leq b$. Now let us fix $i$, $j \in \{ 1 , \ldots ,m \}$. One can prove by induction that
\begin{multline*}
0 \leq \vert Z_{ij} (t,s) \vert \leq (t-s)^{\a_{ij} - 1} \left( \sum_{p=0}^{n-1} M^p \sum_{k_1,\ldots,k_p} \dfrac{1}{\Gamma (\a_{ij} + \sum_{q=1}^p \a_{k_q j})} (t-s)^{\sum_{q=1}^p \a_{k_q j}} \right) \\
+ M^n \sum_{k_1,\ldots,k_n} \I^{\a_{ij} + \sum_{q=1}^{n-1} \a_{k_q j}}_{s+} \big[ \; \vert Z_{k_n j }(\cdot,s) \vert \; \big] (t),
\end{multline*}
for almost every $a \leq s < t \leq b$ and for every $n \in \N^*$. Thus
\begin{multline*}
0 \leq \vert Z_{ij} (t,s) \vert \leq (t-s)^{\a_{ij} - 1} \left( \sum_{p=0}^{n-1} ( M (b-a)^{\delta} )^p \sum_{k_1,\ldots,k_p} \dfrac{1}{\Gamma (\a_{ij} + \sum_{q=1}^p \a_{k_q j})} \right) \\
+ M^n \sum_{k_1,\ldots,k_n} \I^{\a_{ij} + \sum_{q=1}^{n-1} \a_{k_q j}}_{s+} \big[ \; \vert Z_{k_n j }(\cdot,s) \vert \; \big] (t),
\end{multline*}
for almost every $a \leq s < t \leq b$ and for every $n \in \N^*$. For $p \in \N$ sufficiently large, we have $ (p+1) \beta \geq 2$ and thus
$$ \sum_{k_1,\ldots,k_p} \dfrac{1}{\Gamma (\a_{ij} + \sum_{q=1}^p \a_{k_q j})} \leq \dfrac{m^p}{\Gamma ((p+1)\beta)}. $$
From the definition of the classical Mittag-Leffler function, we conclude that the series
$$  \sum_{p=0}^{n-1} ( M (b-a)^{\delta} )^p \sum_{k_1,\ldots,k_p} \dfrac{1}{\Gamma (\a_{ij} + \sum_{q=1}^p \a_{k_q j})} , $$
converges to some $\Theta^{b}_{ij} \in \R^+$ when $n \to \infty$. Now let us assume that $n \in \N^*$ is sufficiently large in order to have $n \beta \geq 2$. Then, for almost every $a \leq s < t \leq b$, it holds that
$$
M^n \sum_{k_1,\ldots,k_n} \I^{\a_{ij} + \sum_{q=1}^{n-1} \a_{k_q j}}_{s+} \big[ \; \vert Z_{k_n j }(\cdot,s) \vert \; \big] (t) \leq \dfrac{M^n m^{n-1} (b-a)^{n\delta -1}}{\Gamma (n\beta)} \sum_{k=1}^m \int_s^t \vert Z_{kj} (\t,s) \vert \; d\t ,
$$
that tends to zero when $n \to \infty$. Finally, we have proved that
$$ 0 \leq \vert Z_{ij} (t,s) \vert \leq (t-s)^{\a_{ij}-1} \Theta^{b}_{ij}, $$
for almost every $a \leq s < t \leq b$. To conclude, one has to define $\Theta^b := \max_{ij} \Theta^b_{ij}$.

\subsection{Proof of Theorem~\ref{thmduhamelRL}}
From Lemma~\ref{lemimportant}, we can correctly define the function $q \in \L^1_\loc(I,\R^m)$ by
$$ q(t) := Z(t,a) \times q_a + \int_a^t Z(t,s) \times B(s) \; ds , $$
for almost every $t \in I$. Let us prove that $q$ is the unique (global) solution of~\eqref{eqcauchyproblemRLLV} provided in virtue of Theorem~\ref{thm1RL}. From Definition~\ref{defRLstate} it holds that
\begin{multline*}
q(t)  =  \left( \left[ \dfrac{1}{\Gamma(\aa)} (t-a)^{\aa-1} \right] \otimes \ID_m + \int_a^t \left[ \dfrac{1}{\Gamma(\aa)} (t-\t)^{\aa-1} \right] \otimes \Big[ A(\t) \times Z(\t,a) \Big] \; d\t \right) \times q_a \\
+ \int_a^t \left( \left[ \dfrac{1}{\Gamma(\aa)} (t-s)^{\aa-1} \right] \otimes \ID_m + \int_s^t \left[ \dfrac{1}{\Gamma(\aa)} (t-\t)^{\aa-1} \right] \otimes \Big[ A(\t) \times Z(\t,s) \Big] \; d\t \right) \times B(s) \; ds,
\end{multline*}
for almost every $t \in I$. It follows from Lemma~\ref{lemcalcul} that
\begin{multline*}
q(t)  =  \left[ \dfrac{1}{\Gamma(\a)} (t-a)^{\a-1} \right] \otimes q_a + \int_a^t \left[ \dfrac{1}{\Gamma(\a)} (t-\t)^{\a-1} \right] \otimes \Big[ A(\t) \times Z(\t,a) \times q_a + B(\t) \Big] \; d\t \\
+ \int_a^t \int_s^t \left[ \dfrac{1}{\Gamma(\a)} (t-\t)^{\a-1} \right] \otimes \Big[ A(\t) \times Z(\t,s) \times B(s) \Big] \; d\t \; ds,
\end{multline*}
for almost every $t \in I$. From the classical Fubini formula it holds that
\begin{multline*}
\int_a^t \int_s^t \left[ \dfrac{1}{\Gamma(\a)} (t-\t)^{\a-1} \right] \otimes \Big[ A(\t) \times Z(\t,s) \times B(s) \Big] \; d\t \; ds \\
= \int_a^t \left[ \dfrac{1}{\Gamma(\a)} (t-\t)^{\a-1} \right] \otimes  \left[ A(\t) \times \int_a^\t Z(\t,s) \times B(s) \; ds \right] \; d\t.
\end{multline*}
Combining the two previous equalities we conclude that
$$ q(t) = \D^{1-\a}_{a+} [q_a] (t) + \I^\a_{a+} [ A \times q + B](t), $$
for almost every $t \in I$. We conclude from Proposition~\ref{prop1RL} that $q$ is the unique (global) solution of~\eqref{eqcauchyproblemRLLV}.

\subsection{Proof of Theorem~\ref{thmduhamelC}}
This proof is very similar to the proof of Theorem~\ref{thmduhamelRL}. From Lemma~\ref{lemimportant}, we can correctly define the function $q \in \C(I,\R^m)$ by
$$ q(t) := {}_\c Z(t,a) \times q_a + \int_a^t Z(t,s) \times B(s) \; ds , $$
for every $t \in I$. Let us prove that $q$ is the unique maximal solution, that is moreover global, of~\eqref{eqcauchyproblemCLV} provided in virtue of Theorems~\ref{thm1} and~\ref{thm3}. From Definitions~\ref{defRLstate} and \ref{defCstate} it holds that
\begin{multline*}
q(t)  =  \left( \ID_m + \int_a^t \left[ \dfrac{1}{\Gamma(\aa)} (t-\t)^{\aa-1} \right] \otimes \Big[ A(\t) \times {}_\c Z(\t,a) \Big] \; d\t \right) \times q_a \\
+ \int_a^t \left( \left[ \dfrac{1}{\Gamma(\aa)} (t-s)^{\aa-1} \right] \otimes \ID_m + \int_s^t \left[ \dfrac{1}{\Gamma(\aa)} (t-\t)^{\aa-1} \right] \otimes \Big[ A(\t) \times Z(\t,s) \Big] \; d\t \right) \times B(s) \; ds,
\end{multline*}
for every $t \in I$. It follows from Lemma~\ref{lemcalcul} that
\begin{multline*}
q(t)  =  q_a + \int_a^t \left[ \dfrac{1}{\Gamma(\a)} (t-\t)^{\a-1} \right] \otimes \Big[ A(\t) \times {}_\c Z(\t,a) \times q_a + B(\t) \Big] \; d\t \\
+ \int_a^t \int_s^t \left[ \dfrac{1}{\Gamma(\a)} (t-\t)^{\a-1} \right] \otimes \Big[ A(\t) \times Z(\t,s) \times B(s) \Big] \; d\t \; ds,
\end{multline*}
for every $t \in I$. From the classical Fubini formula it holds that
\begin{multline*}
\int_a^t \int_s^t \left[ \dfrac{1}{\Gamma(\a)} (t-\t)^{\a-1} \right] \otimes \Big[ A(\t) \times Z(\t,s) \times B(s) \Big] \; d\t \; ds \\
= \int_a^t \left[ \dfrac{1}{\Gamma(\a)} (t-\t)^{\a-1} \right] \otimes  \left[ A(\t) \times \int_a^\t Z(\t,s) \times B(s) \; ds \right] \; d\t.
\end{multline*}
Combining the two previous equalities we conclude that
$$ q(t) = q_a + \I^\a_{a+} [ A \times q + B](t), $$
for every $t \in I$. We conclude from Proposition~\ref{prop1C} that $q$ is the unique maximal solution, that is moreover global, of~\eqref{eqcauchyproblemCLV}.

\subsection{Proof of Theorem~\ref{thmduality}}
Since $Z(\cdot,\cdot)$ is the left R-L state-transition matrix associated to $A$ and $\aa$, it follows from Definition~\ref{defRLstate} that
$$ Z(t,s) = \left[ \dfrac{1}{\Gamma(\aa)} (t-s)^{\aa-1} \right] \otimes \ID_m + \int_s^t \left[ \dfrac{1}{\Gamma(\aa)} (t-\t)^{\aa-1} \right] \otimes \Big[ A(\t) \times Z(\t,s) \Big] \; d\t , $$
for almost every $t$, $s \in I$ with $t >s$. From Lemma~\ref{lemimportant}, we can correctly define $T(t,s)$ by
$$ T(t,s) := \left[ \dfrac{1}{\Gamma(\aaa)} (t-s)^{\aaa-1} \right] \otimes \ID_m + \int_s^t \left[ \dfrac{1}{\Gamma(\aaa)} (\t-s)^{\aaa-1} \right] \otimes \Big[ Z(t,\t) \times A(\t) \Big] \; d\t , $$
for almost every $t$, $s \in I$ with $t > s$. Our aim is prove that $Z(\cdot,\cdot) = T(\cdot,\cdot)$. It follows from Definition~\ref{defRLstate} that
\begin{multline*}
\int_s^t \left[ \dfrac{1}{\Gamma(\aaa)} (\t-s)^{\aaa-1} \right] \otimes \Big[ Z(t,\t) \times A(\t) \Big] \; d\t \\
= \int_s^t \left[ \dfrac{1}{\Gamma(\aaa)} (\t-s)^{\aaa-1} \right] \otimes \left[ \left( \Big[   \dfrac{1}{\Gamma(\aa)} (t-\t)^{\aa-1}  \Big] \otimes \ID_m  \right) \times A(\t) \right] \; d\t \\
+ \int_s^t \left[ \dfrac{1}{\Gamma(\aaa)} (\t-s)^{\aaa-1} \right] \otimes \left[ \left( \int_\t^t \left[ \dfrac{1}{\Gamma(\aa)} (t-\xi)^{\aa-1} \right] \otimes \Big[ A(\xi) \times Z(\xi,\t) \Big] \; d\xi \right) \times A(\t) \right] \; d\t ,
\end{multline*}
for almost every $t$, $s \in I$ with $t > s$. From the classical Fubini formula and from Lemmas~\ref{lemcalcul2} and \ref{lemcalcul3}, we prove that
\begin{multline*}
\int_s^t \left[ \dfrac{1}{\Gamma(\aaa)} (\t-s)^{\aaa-1} \right] \otimes \Big[ Z(t,\t) \times A(\t) \Big] \; d\t \\
= \int_s^t \left[ \dfrac{1}{\Gamma(\aa)} (t-\t)^{\aa-1} \right] \otimes \left[ A(\t) \times \left( \Big[   \dfrac{1}{\Gamma(\aaa)} (\t-s)^{\aaa-1}  \Big] \otimes \ID_m  \right) \right] \; d\t \\
+ \int_s^t \left[ \dfrac{1}{\Gamma(\aa)} (t-\xi)^{\aa-1} \right] \otimes \left[ A(\xi) \times \left( \int_s^\xi \left[ \dfrac{1}{\Gamma(\aaa)} (\t-s)^{\aaa-1} \right] \otimes \Big[  Z(\xi,\t) \times A(\t) \Big] \; d\t \right) \right] \; d\xi ,
\end{multline*}
for almost every $t$, $s \in I$ with $t > s$. Finally we obtain
\begin{multline*}
\int_s^t \left[ \dfrac{1}{\Gamma(\aaa)} (\t-s)^{\aaa-1} \right] \otimes \Big[ Z(t,\t) \times A(\t) \Big] \; d\t \\
= \int_s^t \left[ \dfrac{1}{\Gamma(\aa)} (t-\t)^{\aa-1} \right] \otimes \left[ A(\t) \times \left( \Big[   \dfrac{1}{\Gamma(\aaa)} (\t-s)^{\aaa-1}  \Big] \otimes \ID_m  \right. \right. \\
+ \left. \left. \int_s^\t \left[ \dfrac{1}{\Gamma(\aaa)} (\xi-s)^{\aaa-1} \right] \otimes \Big[  Z(\t,\xi) \times A(\xi) \Big] \; d\xi \right) \right] \; d\t ,
\end{multline*}
for almost every $t$, $s \in I$ with $t > s$. We finally use Lemma~\ref{lemcalcul4} in order to conclude that
$$ T(t,s) = \left[ \dfrac{1}{\Gamma(\aa)} (t-s)^{\aa-1} \right] \otimes \ID_m + \int_s^t \left[ \dfrac{1}{\Gamma(\aa)} (t-\t)^{\aa-1} \right] \otimes \Big[ A(\t) \times T(\t,s) \Big] \; d\t ,$$
for almost every $t$, $s \in I$ with $t > s$. From the definition and the uniqueness of $Z(\cdot,\cdot)$, we obtain that $T(\cdot,\cdot) = Z(\cdot,\cdot)$ and the proof is complete.

\subsection{Proof of Theorem~\ref{thmduality2}}
This proof is very similar to the proof of Theorem~\ref{thmduality}. Since $A(\cdot) = A$ is constant and since ${}_\c Z(\cdot,\cdot)$ is the left Caputo state-transition matrix associated to $A$ and $\aa$, it follows from Definition~\ref{defCstate} that
$$ {}_\c Z(t,s) = \ID_m + \int_s^t \left[ \dfrac{1}{\Gamma(\aa)} (t-\t)^{\aa-1} \right] \otimes \Big[ A \times {}_\c Z(\t,s) \Big] \; d\t , $$
for every $t$, $s \in I$ with $t \geq s$. We define $T(t,s)$ by
$$ T(t,s) := \ID_m + \int_s^t \left[ \dfrac{1}{\Gamma(\aaa)} (\t-s)^{\aaa-1} \right] \otimes \Big[ {}_\c Z(t,\t) \times A \Big] \; d\t , $$
for every $t$, $s \in I$ with $t \geq s$. Our aim is prove that ${}_\c Z(\cdot,\cdot) = T(\cdot,\cdot)$. It follows from Definition~\ref{defCstate} that
\begin{multline*}
\int_s^t \left[ \dfrac{1}{\Gamma(\aaa)} (\t-s)^{\aaa-1} \right] \otimes \Big[ {}_\c Z(t,\t) \times A \Big] \; d\t
= \int_s^t \left[ \dfrac{1}{\Gamma(\aaa)} (\t-s)^{\aaa-1} \right] \otimes A \; d\t  \\
+ \int_s^t \left[ \dfrac{1}{\Gamma(\aaa)} (\t-s)^{\aaa-1} \right] \otimes \left[ \left( \int_\t^t \left[ \dfrac{1}{\Gamma(\aa)} (t-\xi)^{\aa-1} \right] \otimes \Big[ A \times {}_\c Z(\xi,\t) \Big] \; d\xi \right) \times A \right] \; d\t ,
\end{multline*}
for every $t$, $s \in I$ with $t \geq s$. With the help of a change of variable (and since $A(\cdot) = A$ is constant), from the classical Fubini formula and from Lemmas~\ref{lemcalcul2} and \ref{lemcalcul3}, we prove that
\begin{multline*}
\int_s^t \left[ \dfrac{1}{\Gamma(\aaa)} (\t-s)^{\aaa-1} \right] \otimes \Big[ {}_\c Z(t,\t) \times A \Big] \; d\t
= \int_s^t \left[ \dfrac{1}{\Gamma(\aa)} (t-\t)^{\aa-1} \right] \otimes A \; d\t \\
+ \int_s^t \left[ \dfrac{1}{\Gamma(\aa)} (t-\xi)^{\aa-1} \right] \otimes \left[ A \times \left( \int_s^\xi \left[ \dfrac{1}{\Gamma(\aaa)} (\t-s)^{\aaa-1} \right] \otimes \Big[  {}_\c Z(\xi,\t) \times A \Big] \; d\t \right) \right] \; d\xi ,
\end{multline*}
for every $t$, $s \in I$ with $t \geq s$. Finally we obtain
\begin{multline*}
\int_s^t \left[ \dfrac{1}{\Gamma(\aaa)} (\t-s)^{\aaa-1} \right] \otimes \Big[ {}_\c Z(t,\t) \times A \Big] \; d\t \\
= \int_s^t \left[ \dfrac{1}{\Gamma(\aa)} (t-\t)^{\aa-1} \right] \otimes \left[ A \times \left( \ID_m + \int_s^\t \left[ \dfrac{1}{\Gamma(\aaa)} (\xi-s)^{\aaa-1} \right] \otimes \Big[  {}_\c Z(\t,\xi) \times A \Big] \; d\xi \right) \right] \; d\t ,
\end{multline*}
for every $t$, $s \in I$ with $t \geq s$. We actually have obtained that
$$ T(t,s) = \ID_m + \int_s^t \left[ \dfrac{1}{\Gamma(\aa)} (t-\t)^{\aa-1} \right] \otimes \Big[ A \times T(\t,s) \Big] \; d\t ,$$
for every $t$, $s \in I$ with $t \geq s$. From the definition and the uniqueness of ${}_\c Z(\cdot,\cdot)$, we conclude that $T(\cdot,\cdot) = {}_\c Z(\cdot,\cdot)$ and the proof is complete.

\bibliographystyle{plain}

\end{document}